\theoremstyle{plain}
\newtheorem{thm}{Theorem}[section]
\newtheorem{lemme}[thm]{Lemma}
\newtheorem{prop}[thm]{Proposition}
\newtheorem{cor}[thm]{Corollary}
\newtheorem*{thm*}{Theorem}
\newtheorem*{prop*}{Proposition}
\theoremstyle{definition}
\newtheorem{defn}[thm]{Definition}
\newtheorem*{defn*}{Definition}
\newtheorem*{conj*}{Conjecture}
\newtheorem{rem}[thm]{Remark}
\newcommand*\conjug[1]{\overline{#1}}
\renewcommand{\Im}{\mathrm{Im}}
\renewcommand*\setminus{\ensuremath{{}-{}}}
\newcommand{\hookuparrow}{\mathrel{\rotatebox[origin=c]{90}{$\hookrightarrow$}}}
\newcommand{\HH}{\mathbb{H}}
\newcommand{\CC}{\mathbb{C}}
\newcommand{\RR}{\mathbb{R}}
\newcommand{\NN}{\mathbb{N}}
\newcommand*{\transp}[2][-3mu]{\ensuremath{\mskip1mu\prescript{\smash{\mathrm t\mkern#1}}{}{\mathstrut#2}}}%
\title{Character varieties for real forms of classical complex groups}
\author{
	\textsc{Miguel ACOSTA}
	\thanks{Miguel Acosta was partially supported by the grants R-STR-8023-00-B <<MnLU-MESR CAFE-AutoFi>> and R-AGR-3172-10-C <<FNR-OPEN>>.} 
	\footnote{
		Unité de recherche en Mathématiques,
		Université du Luxembourg, 
		Maison du Nombre,
		6, Avenue de la Fonte,
		L-4364 Esch-sur-Alzette,
		Luxembourg}
	 }
\begin{document}
\maketitle

\begin{abstract}
	Let $\Gamma$ be a finitely generated group, $G_\CC$ be a classical complex group and $G_\RR$ a real form of $G_\CC$. We propose a definition of the $G_\RR$-character variety of $\Gamma$ as a subset $\mathcal{X}_{G_\RR}(\Gamma)$ of the $G_\CC$-character variety $\mathcal{X}_{G_\CC}(\Gamma)$. We prove that these subsets cover the set of irreducible $G_\CC$-characters fixed by an anti-holomorphic involution $\Phi$ of $\mathcal{X}_{G_\CC}(\Gamma)$. Whenever $G_\RR$ is compact, we prove that $\mathcal{X}_{G_\RR}(\Gamma)$ is homeomorphic to the topological quotient $\mathrm{Hom}(\Gamma,G_\RR)/G_\RR$. Finally, we identify the reducible points of $\mathcal{X}_{\mathrm{GL}(n,\CC)}(\Gamma)$ fixed by an anti-holomorphic involution $\Phi$ as coming from direct sums of representations with values in real groups.
\end{abstract}

\section{Introduction}

 Let $\Gamma$ be a finitely generated group and $G$ be a Lie group.
 In many contexts, it is interesting to study the representations of $\Gamma$ with values in $G$ up to conjugation.
 A crucial example is when $\Gamma$ is the fundamental group of a manifold $M$ and we study the set of $(G,X)$-structures on $M$. In this setting, the space of structures is parametrized locally by the representations of $\Gamma$ with values in $G$ up to conjugation: it is the Ehresmann-Thusrton principle. See the articles of Canary, Epstein and Green \cite{canary_epstein_green} or Bergeron and Gelander \cite{bergeron_gelander} for a detailed exposition.
 However, in general, the space $\mathrm{Hom}(\Gamma,G)/G$, where $G$ acts by conjugation, may not be Hausdorff for the quotient topology, and has no particular structure, so it is difficult to study.

 When $G$ is a complex algebraic reductive group, the set $\mathrm{Hom}(\Gamma,G)$ is an algebraic set, and the geometric invariant theory (GIT) allows us to consider an algebraic quotient, that is called the character variety:
 \[ \mathcal{X}_G(\Gamma) = \mathrm{Hom}(\Gamma,G)//G .\]
 For the details of the construction, see for example the article of Lubotzki and Magid \cite{lubotzky_varieties_1985}, the survey of Sikora \cite{sikora_character_2012a} or the background section of \cite{heusener_slnc_2016}. This quotient can be considered as a possibly non-reduced scheme $\mathfrak{X}_G(\Gamma)$ or as an algebraic set $\mathcal{X}_G(\Gamma)$, that coincides with the Hausdorffization of the topological quotient $\mathrm{Hom}(\Gamma,G)/G$. We will mainly keep this last point of view in this article.
 
 When $G$ does not satisfy these last hypotheses, it is then natural to try to construct a similar object that would allow to study representations up to conjugation, and to relate it to a character variety. In this article, we will consider the frame of real forms $G_\RR$ of classical complex groups $G_\CC$. In this case, the character variety $\mathcal{X}_{G_\CC}(\Gamma)$ is well defined as a GIT quotient.

 Another case of interest is when $G$ is a compact group, so the topological quotient $\mathrm{Hom}(\Gamma,G)/G$ is also compact. When $G= \mathrm{SU}(n)$,
 Procesi and Schwartz prove in \cite{procesi_inequalities_1985} that $\mathrm{Hom}(\Gamma,G)/G$ is a semi-algebraic set; we prove in \cite{acosta_character_2019} that it can be embedded in the $\mathrm{SL}(n,\CC)$-character variety of $\Gamma$. 
 Furthermore, if $K$ is a maximal compact subgroup of a complex or real algebraic reductive group $G$, some
 recent results prove there is a strong deformation retraction from the set of closed orbits of $\mathrm{Hom}(\Gamma , G)/G$ to the quotient $\mathrm{Hom}(\Gamma , K )/K$ . This fact is proven for Abelian groups by Florentino and Lawton in \cite{florentino_topology_2014}, for free groups by Casimiro, Florentino, Lawton and Oliveira in \cite{casimiro_topology_2014}, and for nilpotent groups by Bergeron in \cite{bergeron_topology_2015}.

 In \cite{acosta_character_2019}, we propose a definition of \emph{character variety for a real form $G_\RR$ of $\mathrm{SL}(n,\CC)$} as the image of $\mathrm{Hom}(\Gamma , G_\RR)$ by the natural projection $\mathrm{Hom}(\Gamma , G_\RR) \to \mathcal{X}_{\mathrm{SL}(n,\CC)}(\Gamma)$. We consider two anti-holomorphic involutions, $\Phi_1$ and $\Phi_2$, of $\mathcal{X}_{\mathrm{SL}(n,\CC)}(\Gamma)$ and prove that irreducible characters fixed by $\Phi_2$ lift to representations with values in a $\mathrm{SU}(p,q)$ group, and that irreducible characters fixed by $\Phi_1$ lift to a representation with values in a $\mathrm{SL}(n,\RR)$ or $\mathrm{SL}(n/2 , \HH)$ if $n$ is even.
Moreover, we prove that if the real form is $\mathrm{SU}(n)$, the $\mathrm{SU}(n)$-character variety, as a subset of the $\mathrm{SL}(n,\CC)$-character variety, is homeomorphic to the topological quotient $\mathrm{Hom}(\Gamma,\mathrm{SU}(n))/\mathrm{SU}(n)$.
In this article, we generalize these results in three different directions. First, we extend the definition and the lifting of irreducible characters to the classical complex groups, namely $\mathrm{GL}(n,\CC)$, $\mathrm{SL}(n,\CC)$, $\mathrm{O}(n,\CC)$, $\mathrm{SO}(n,\CC)$ and  $\mathrm{Sp}(2n,\CC)$. We prove the following theorem:

\begin{thm}\label{thm:main_thm}
	Let $n$ be a positive integer, $\Gamma$ a finitely generated group and $G_{\CC}$ be one of the groups $\mathrm{GL}(n,\CC)$, $\mathrm{SL}(n,\CC)$, $\mathrm{O}(n,\CC)$, $\mathrm{SO}(n,\CC)$ or $\mathrm{Sp}(2n,\CC)$. Let $\Phi$ be the anti-holomorphic involution of $\mathcal{X}_{G_{\CC}}(\Gamma)$ induced by one of the involutions $A \mapsto \conjug{A}$ or $A \mapsto \transp{\conjug{A}}^{-1}$. Let $\rho \in \mathrm{Hom}(\Gamma,G_{\CC})$ be an irreducible representation such that $\Phi(\chi_\rho) = \chi_\rho$.
	Then, there exist a real form $G_{\RR}$ of $G_{\CC}$, $\sigma \in \mathrm{Hom}(\Gamma,G_{\RR})$ and $P \in G_{\CC}$ such that
	\begin{align*}
	\forall \gamma \in \Gamma &: P\rho(\gamma)P^{-1} = \sigma(\gamma).
	\end{align*}
\end{thm}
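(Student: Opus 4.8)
The plan is to convert the hypothesis $\Phi(\chi_\rho)=\chi_\rho$ into a concrete intertwining relation and then to build, out of the intertwiner, an (anti-linear) structure preserved by $\rho$ whose fixed set produces the real form. Write $\tau$ for the anti-holomorphic involution of $G_\CC$ inducing $\Phi$, so that $\tau(A)=\conjug A$ or $\tau(A)=\transp{\conjug A}^{-1}$; in both cases $\tau$ is an involutive anti-holomorphic automorphism of $G_\CC$ whose fixed-point set is a real form. Since $\Phi$ is induced by $\tau$, the character of $\tau\circ\rho$ equals $\Phi(\chi_\rho)=\chi_\rho$; as $\rho$ is irreducible and two representations with the same $G_\CC$-character are $G_\CC$-conjugate, there is $P\in G_\CC$ with $\tau(\rho(\gamma))=P\rho(\gamma)P^{-1}$ for every $\gamma\in\Gamma$.

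First I would extract the basic invariant. Applying $\tau$ to this relation and using $\tau^2=\mathrm{id}$ yields $\rho(\gamma)=\bigl(\tau(P)P\bigr)\rho(\gamma)\bigl(\tau(P)P\bigr)^{-1}$, so $\tau(P)P$ lies in the commutant of $\rho$. By Schur's lemma (irreducibility) $\tau(P)P=\lambda I$ for some $\lambda\in\CC^\ast$. Applying $\tau$ once more to this scalar relation pins $\lambda$ down: one gets $\lambda\in\RR^\ast$ when $\tau(A)=\conjug A$, and $\abs{\lambda}=1$ when $\tau(A)=\transp{\conjug A}^{-1}$.

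Then I would manufacture the $\rho$-invariant structure, splitting on the two involutions. In the conjugation case, set $j(v)=\nu P^{-1}\conjug v$ for a scalar $\nu$ to be chosen; since $\nu P^{-1}\cdot P=\nu I$ is central, $j$ commutes with every $\rho(\gamma)$, and a direct computation using $\conjug P P=\lambda I$ gives $j^2=\abs{\nu}^2\lambda^{-1}\,\mathrm{id}$. Choosing $\abs{\nu}^2=\lambda$ when $\lambda>0$ makes $j$ a $\rho$-invariant real structure, so an adapted $\CC$-basis of $\mathrm{Fix}(j)$ conjugates $\rho$ into the $\tau$-fixed real form; choosing $\abs{\nu}^2=-\lambda$ when $\lambda<0$ makes $j$ a quaternionic structure, forcing $n$ even and conjugating $\rho$ into $\mathrm{GL}(n/2,\HH)$ and its analogues. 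In the conjugate-transpose-inverse case the relation reads $\transp{\conjug{\rho(\gamma)}}\,P\,\rho(\gamma)=P$, so $\rho$ preserves the sesquilinear form with matrix $P$; rescaling $P$ by a suitable unit scalar (allowed since $\abs{\lambda}=1$) makes $P$ Hermitian, and diagonalising it to signature $(p,q)$ conjugates $\rho$ into $\mathrm{U}(p,q)$, $\mathrm{SU}(p,q)$ and their analogues.

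The main obstacle is not the linear-algebra dichotomy above but the compatibility with the extra structure carried by $\mathrm{O}(n,\CC)$, $\mathrm{SO}(n,\CC)$ and $\mathrm{Sp}(2n,\CC)$: I must check that the conjugating matrix can be chosen inside $G_\CC$ and that the invariant structure $j$ (resp. the Hermitian form $P$) is compatible with the defining bilinear form, so that the target group is genuinely a real form of $G_\CC$ and not merely of $\mathrm{GL}(n,\CC)$. This, together with the determinant normalisation needed to keep the conjugating matrix in $\mathrm{SL}(n,\CC)$ and $\mathrm{SO}(n,\CC)$, is where the argument becomes case-by-case; I expect to handle it by verifying that $\tau$ preserves the defining form and propagating this compatibility through the construction of $j$ and of the adapted basis.
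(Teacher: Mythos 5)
Your first two paragraphs reproduce, correctly, what the paper isolates as its ``general argument'': from $\Phi(\chi_\rho)=\chi_\rho$ and irreducibility one gets $\tau\circ\rho = P\rho P^{-1}$, Schur's lemma gives $\tau(P)P=\lambda I$, and one obtains an anti-holomorphic involution of $G_\CC$ fixing $\Im(\rho)$. The paper states this explicitly (its Proposition in Section~4) but is careful to present it only as a proof of a \emph{weaker} statement, in which ``real form'' means ``fixed set of some anti-holomorphic involution of $G_\CC$'' rather than ``one of the listed groups, reached by conjugation inside $G_\CC$''. The theorem as stated requires the latter, and that is exactly the part you defer to ``I expect to handle it by verifying\dots''.

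That deferred part is where the bulk of the paper's proof lives, and it contains genuinely non-trivial content that your sketch does not supply. Concretely: (i) for $\mathrm{Sp}(2n,\CC)$ and $\mathrm{O}(2n,\CC)$ the two involutions coincide on characters, and the preserved Hermitian form $H$ satisfies $\transp{H}JH=\lambda J$ (resp.\ $\transp{H}H=\lambda I$) with $\lambda$ real, normalisable to $\pm1$; the case $\lambda=-1$ is not a routine compatibility check --- it forces $H$ to have signature $(n,n)$ and requires a simultaneous normalisation of $H$ and the bilinear form by a \emph{symplectic} (resp.\ orthogonal) matrix, which the paper gets from a result of Fassbender--Ikramov and an adaptation of it, and it is this case that produces $\mathrm{Sp}(2n,\RR)$ and $\mathrm{O}(n,\HH)$. (ii) One also needs the lemma that two irreducible representations into $\mathrm{Sp}(2n,\CC)$ or $\mathrm{O}(n,\CC)$ that are conjugate in $\mathrm{GL}$ are already conjugate in the smaller group, which you use implicitly when you ask for $P\in G_\CC$. (iii) The determinant normalisation for $\mathrm{SO}(4m,\CC)$ is not merely bookkeeping: the quaternionic form $\mathrm{SO}(2m,\HH)$ is \emph{not} conjugate to $\mathrm{SO}^-(2m,\HH)$ inside $\mathrm{SO}(4m,\CC)$, so insisting on $P\in\mathrm{SO}(4m,\CC)$ genuinely changes the list of real forms one must allow; your sketch does not anticipate this. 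So the skeleton is right and matches the paper, but as written the proposal proves only the weak form of the theorem; the passage from ``fixed by some anti-holomorphic involution'' to ``conjugate in $G_\CC$ into a listed real form'' is the actual content and is missing.
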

In the statement, an irreducible representation should be understood as a representation $\rho: \Gamma \to G_\CC \subset \mathrm{GL}(n,\CC)$ such that $\Im(\rho)$ has no non-trivial stable subspace for its natural action on $\CC^n$. In the same way, a real form of $G_\CC$ should be understood as one of the groups listed in \Cref{table:real_forms}.

Then, we prove the same homeomorphism result for the character varieties for compact forms of classical complex groups; which is summarized in the following proposition:
\begin{prop*}[\Cref{prop:character_compact}]
	Let $\Gamma$ be a finitely generated group and let $\mathrm{G}_\CC$ be one of the groups $\mathrm{GL}(n,\CC)$, $\mathrm{SL}(n,\CC), \mathrm{O}(n,\CC), \mathrm{SO}(n,\CC)$ and $\mathrm{Sp}(2n,\CC)$, and $K$ its compact real form.
	Then $\mathcal{X}_K(\Gamma)$ is homeomorphic to the topological quotient $\mathrm{Hom}(\Gamma,K)/K$.
\end{prop*}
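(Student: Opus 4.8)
The plan is to exhibit a natural continuous bijection and then promote it to a homeomorphism using compactness. Since $K \subset G_\CC$, the restriction to $\mathrm{Hom}(\Gamma,K)$ of the canonical projection $\pi\colon \mathrm{Hom}(\Gamma,G_\CC) \to \mathcal{X}_{G_\CC}(\Gamma)$ is continuous and $K$-invariant, and by the very definition of $\mathcal{X}_K(\Gamma)$ as the image of $\mathrm{Hom}(\Gamma,K)$ under $\pi$, it factors through a continuous surjection
\[ \bar\pi \colon \mathrm{Hom}(\Gamma,K)/K \longrightarrow \mathcal{X}_K(\Gamma). \]
The entire content of the statement is that $\bar\pi$ is injective; once this is known the conclusion is formal. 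Indeed, as $\Gamma$ is finitely generated with generators $\gamma_1,\dots,\gamma_s$, the evaluation $\rho \mapsto (\rho(\gamma_1),\dots,\rho(\gamma_s))$ realizes $\mathrm{Hom}(\Gamma,K)$ as a closed subset of the compact space $K^s$, so it is compact and so is its quotient $\mathrm{Hom}(\Gamma,K)/K$; on the other hand $\mathcal{X}_K(\Gamma)$ carries the subspace topology of the Hausdorff space $\mathcal{X}_{G_\CC}(\Gamma)$, hence is Hausdorff. A continuous bijection from a compact space onto a Hausdorff space is a homeomorphism, which gives the claim.

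To prove injectivity I must show that two representations $\rho_1,\rho_2 \in \mathrm{Hom}(\Gamma,K)$ with $\chi_{\rho_1} = \chi_{\rho_2}$ are conjugate by an element of $K$. First I would pass from equal characters to $G_\CC$-conjugacy. Any representation into the compact group $K$ preserves a Hermitian inner product, hence is completely reducible, so $\rho_1$ and $\rho_2$ are semisimple and their $G_\CC$-orbits are closed. Since closed orbits are separated by the points of the GIT quotient $\mathcal{X}_{G_\CC}(\Gamma)$, the equality $\chi_{\rho_1}=\chi_{\rho_2}$ forces the two orbits to coincide, producing $P \in G_\CC$ with $P\rho_1(\gamma)P^{-1} = \rho_2(\gamma)$ for every $\gamma \in \Gamma$.

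The heart of the argument, and the step I expect to be the main obstacle, is to upgrade this $P \in G_\CC$ to an element of $K$. For each group in the list, $K$ is the fixed-point set in $G_\CC$ of the Cartan involution $\theta(A) = \transp{\conjug{A}}^{-1}$, and one has the polar (Cartan) decomposition $G_\CC = K \cdot \exp(\mathfrak{p})$, where $\mathfrak{p}$ is the $(-1)$-eigenspace of $d\theta$; write $P = UH$ with $U \in K$ and $H \in \exp(\mathfrak{p})$ positive definite Hermitian. Applying $\theta$ to the conjugacy relation and using $\theta\circ\rho_i = \rho_i$ (because $\rho_i$ takes values in $\mathrm{Fix}(\theta) = K$) shows that $\theta(P)$ also conjugates $\rho_1$ to $\rho_2$, so $P^{-1}\theta(P) = (P^{*}P)^{-1} = H^{-2}$ commutes with $\rho_1(\gamma)$ for all $\gamma$. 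The commutant of the image of $\rho_1$ is a closed subalgebra of $M_n(\CC)$ containing the positive definite matrix $H^2$; by functional calculus it also contains $H = (H^2)^{1/2}$, so $H$ commutes with every $\rho_1(\gamma)$ as well. Substituting $P = UH$ into $P\rho_1(\gamma)P^{-1} = \rho_2(\gamma)$ then cancels $H$ and yields $U\rho_1(\gamma)U^{-1} = \rho_2(\gamma)$ with $U \in K$, as required.

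The one point requiring care across the five families is that this square-root-and-cancellation argument stays inside $G_\CC$. This holds uniformly: $\mathfrak{p}$ is a linear subspace stable under the functional calculus used, so for $S = \exp(X) \in \exp(\mathfrak{p})$ every real power $S^{t} = \exp(tX)$ again lies in $\exp(\mathfrak{p}) \subset G_\CC$, and in particular $H = (H^2)^{1/2} \in G_\CC$. Thus a single lemma treats $\mathrm{U}(n)$, $\mathrm{SU}(n)$, $\mathrm{O}(n)$, $\mathrm{SO}(n)$ and the compact symplectic group simultaneously, completing the proof of injectivity and hence of the proposition.
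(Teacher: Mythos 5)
Your proof is correct and follows essentially the same route as the paper: reduce to injectivity of the continuous surjection from the compact quotient $\mathrm{Hom}(\Gamma,K)/K$ onto the Hausdorff set $\mathcal{X}_K(\Gamma)$, use semisimplicity to obtain $G_\CC$-conjugacy, and then use the polar decomposition $P=UH$ (with both factors in $G_\CC$) to replace the conjugating element by its unitary part. The only variation is in the last step: where you show that $P^*P=H^2$ lies in the commutant of the image of $\rho_1$ and extract $H$ by functional calculus, the paper instead rewrites the relation as $\bigl(U\rho_1(\gamma)\bigr)\cdot\bigl(\rho_1(\gamma)^{-1}H\rho_1(\gamma)\bigr)=\bigl(\rho_2(\gamma)U\bigr)\cdot H$ and invokes uniqueness of the polar decomposition; both arguments are valid and reach the same conclusion.
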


Finally, we consider characters of reducible representations in the $\mathrm{GL}(n,\CC)$-character variety. On the one hand, we prove that if a character is fixed by $\Phi_1$, it lifts to a direct sum of representations with values in some $\mathrm{GL}(r_1 , \RR)$ and some $\mathrm{GL}(r_2 , \HH)$. On the other hand, if a character is fixed by $\Phi_2$, it lifts to a semi-simple representation with values in some $\mathrm{U}(p,q)$. This last result can be stated as follows:
\begin{thm*}[\Cref{thm:char_var_unitary}]
	Let $\Gamma$ be a finitely generated group and $n$ be a positive integer. Then
	\begin{equation*}
	\mathcal{X}^{\Phi_2}_{\mathrm{GL}(n,\CC)}(\Gamma) = 
	\bigcup_{p+q = n} \mathcal{X}_{\mathrm{U}(p,q)}(\Gamma).
	\end{equation*}
\end{thm*}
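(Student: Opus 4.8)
The plan is to prove the two inclusions separately; the reverse inclusion is a one-line computation, while the forward inclusion is where the work lies, and for it I would reduce to the irreducible case already covered by \Cref{thm:main_thm}. Throughout write $\tau(A) = \transp{\conjug{A}}^{-1}$ for the matrix involution inducing $\Phi_2$, and write $A^{\ast} = \transp{\conjug{A}}$ so that $\tau(A) = (A^{\ast})^{-1}$.

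For the inclusion $\bigcup_{p+q=n}\mathcal{X}_{\mathrm{U}(p,q)}(\Gamma) \subseteq \mathcal{X}^{\Phi_2}_{\mathrm{GL}(n,\CC)}(\Gamma)$, I would take $\sigma \in \mathrm{Hom}(\Gamma,\mathrm{U}(p,q))$ preserving a Hermitian form $J$ of signature $(p,q)$. From $\sigma(\gamma)^{\ast} J \sigma(\gamma) = J$ one reads off $\tau(\sigma(\gamma)) = J\sigma(\gamma)J^{-1}$, so $\tau\circ\sigma$ is $\mathrm{GL}(n,\CC)$-conjugate to $\sigma$; hence $\sigma$ and $\tau\circ\sigma$ have the same image in the character variety and $\Phi_2(\chi_\sigma) = \chi_\sigma$.

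For the forward inclusion I would use that every point of $\mathcal{X}_{\mathrm{GL}(n,\CC)}(\Gamma)$ is the class of a semisimple representation $\rho$, unique up to conjugation, and decompose $\rho \cong \bigoplus_i \rho_i^{\oplus m_i}$ into pairwise non-isomorphic irreducibles $\rho_i$ of dimension $d_i$. Since $\tau$ is an automorphism of $\mathrm{GL}(n,\CC)$ it sends irreducibles to irreducibles and preserves semisimplicity, so the hypothesis $\Phi_2(\chi_\rho) = \chi_\rho$ becomes $\tau\circ\rho \cong \rho$; comparing the two decompositions and using their uniqueness yields an involution $\iota$ of the index set with $\tau\circ\rho_i \cong \rho_{\iota(i)}$ and $m_{\iota(i)} = m_i$. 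Then I treat the orbits of $\iota$ separately. For a fixed point $\iota(i)=i$, the representation $\rho_i$ is irreducible with $\Phi_2$-fixed character, so \Cref{thm:main_thm} conjugates it into some $\mathrm{U}(p_i,q_i)$ with $p_i+q_i=d_i$. For a genuine pair $\iota(i)=j\neq i$, I realize $\rho_i\oplus(\tau\circ\rho_i)$ inside $\mathrm{U}(d_i,d_i)$: using $\tau\circ\rho_i = (\rho_i^{\ast})^{-1}$ a short computation shows that this representation preserves the Hermitian form $\begin{pmatrix} 0 & I_{d_i} \\ I_{d_i} & 0 \end{pmatrix}$, which has signature $(d_i,d_i)$. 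Raising each block to its multiplicity and stacking all blocks block-diagonally realizes $\rho$, up to conjugation, in a group $\mathrm{U}(P,Q)$ with $P+Q=n$ via the standard embedding $\prod_k \mathrm{U}(p_k,q_k)\hookrightarrow \mathrm{U}(\sum_k p_k, \sum_k q_k)$, so $\chi_\rho \in \mathcal{X}_{\mathrm{U}(P,Q)}(\Gamma)$.

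The main obstacle I anticipate is the passage from the fixed-point condition on the character to the combinatorial statement that $\tau$ permutes the irreducible constituents: this rests on representing the character-variety point by a semisimple representation and on the uniqueness of its decomposition, and it is what lets \Cref{thm:main_thm} be applied summand by summand. The only other point requiring care is the explicit hyperbolic realization of a non-self-conjugate pair in $\mathrm{U}(d_i,d_i)$, but the form above makes this routine.
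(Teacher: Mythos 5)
Your proof is correct and follows essentially the same route as the paper: the paper's Lemma \ref{lemma:direct_sum_invariant} is exactly your decomposition into orbits of the involution $\iota$ on the irreducible constituents, its Lemma \ref{lemma:rho_plus_rho_phi2} is your hyperbolic realization of a non-self-conjugate pair $\rho_i\oplus(\tau\circ\rho_i)$ in $\mathrm{U}(d_i,d_i)$, and \Cref{prop:reducible_fixed_phi2} assembles the blocks exactly as you do, invoking the irreducible case for the $\iota$-fixed summands. The reverse inclusion is likewise handled in the paper by the same one-line observation (recorded there as a remark listing $\mathcal{X}_{\mathrm{U}(p,q)}(\Gamma)\subset\mathcal{X}^{\Phi_2}_{\mathrm{GL}(n,\CC)}(\Gamma)$).
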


\paragraph*{Outline of the article:}
In Section 2, we fix notation for the classical complex groups and their real forms.
In Section 3, we recall some background material on the character varieties for the classical complex groups, and define two anti-holomorphic involutions on them. Moreover, if $G_\RR$ is a real form of a classical complex group $G_\CC$ we propose a definition for the $G_\RR$-character variety of $\Gamma$ as a subset of the $G_\CC$-character variety.
We devote Section 4 to the proof of \Cref{thm:main_thm}, giving first a short proof of a similar statement, and then dealing separately with symplectic and orthogonal groups for an elementary proof of the theorem.
In Section 5, we consider the compact real forms, and prove in \Cref{prop:character_compact} that the character variety for the compact form $K$ of a classical complex group is homeomorphic to the topological quotient $\mathrm{Hom}(\Gamma,K)/K$. Finally, in Section 6, we consider reducible representations and prove \Cref{thm:char_var_unitary}.

\paragraph*{Acknowledgments:} The author would like to thank Clément Guérin and Alain Genestier for many conversations.

\section{Notation and real forms of classical groups}
Here, the classical complex Lie groups are the groups $\mathrm{GL}(n,\CC)$, $\mathrm{SL}(n,\CC)$, $\mathrm{O}(n,\CC)$, $\mathrm{SO}(n,\CC)$ and  $\mathrm{Sp}(2n,\CC)$, where $n$ is a positive integer. We fix notation for these groups, and recall their real forms, which are classical real Lie groups.

\subsection{Notation and classical complex groups}

For $n \in \NN$, we let $I_n$ denote the identity matrix of size $n \times n$, and, for $p,q,n \in \NN$, we define the matrices $J_{2n}$, $I_{p,q}$ and $K_{p,q}$ by blocks as:
\begin{align*}
	J_{2n} &=
	\begin{pmatrix}
		0 & I_n \\
		-I_n & 0
	\end{pmatrix}
	&
	I_{p,q} &=
	\begin{pmatrix}
		I_p & 0 \\
		0 & -I_q
	\end{pmatrix}
	&
	K_{p,q} &=
	\begin{pmatrix}
		I_{p,q} & 0 \\
		0 & I_{p,q}
	\end{pmatrix}
\end{align*}
If there is no ambiguity for the integer $n$, we will often write $I$ instead of $I_n$ and $J$ instead of $J_{2n}$. 

Recall that $\mathrm{GL}(n,\CC)$ is the group of $n\times n$ invertible complex matrices and $\mathrm{SL}(n,\CC)$ is the subgroup of $\mathrm{GL}(n,\CC)$ of the matrices with determinant $1$.
The orthogonal group $\mathrm{O}(n,\CC)$ is the subgroup of matrices $M \in \mathrm{GL}(2n,\CC)$ satisfying $\transp{M}M = I$. It is the group of matrices of $\mathrm{GL}(2n , \CC)$ preserving the symmetric bilinear form defined by $I$. The special orthogonal group $\mathrm{SO}(n,\CC)$ is defined as $\mathrm{SL}(n,\CC) \cap \mathrm{O}(n,\CC)$.
At last, the symplectic group $\mathrm{Sp}(2n,\CC)$ is the subgroup of matrices $M \in \mathrm{SL}(2n , \CC)$ satisfying $\transp{M}JM = J$. It is the group of matrices of $\mathrm{GL}(2n , \CC)$ preserving the skew-symmetric form defined by $J$.

Observe that when we will have to deal with a bilinear or Hermitian form, we will abusively identify it to its matrix in the canonical basis of $\CC^n$. If $M \in \mathrm{GL}(n,\CC)$, we let $M^*$ or $\transp{\conjug{M}}$ denote the adjoint of $M$ for the usual Hermitian product.

\subsection{Real forms of classical complex groups}
 For a classical group $G_\CC$, there are several ways to define and classify its real forms, which agree most of the time.
 It is possible to define a real form of $G_\CC$ as the fixed points of an anti-holomorphic involution. In this case, we can consider real forms up to an automorphism of $G_\CC$. However, since we will consider objects up to conjugation in $G_\CC$ for the character varieties, we shall consider real forms up to \emph{inner} automorphisms. As pointed out by Adams in \cite{adams_guide_2008}, these two definitions give the same objects most of the time. In our case, the only difference will come from the group $\mathrm{SO}(4m,\HH)$, that we define later. 
 
 Another possibility is to consider the real forms of classical groups as defined by Onishchik and Vinberg in \cite[p. 101]{onishchik_lie_1994}.
 If $G_\CC$ is a complex algebraic subgroup, a real algebraic subgroup $G_\RR$ is said to be a \emph{real form of $G_\CC$} if the identity embedding $G_\RR \subset G_\CC$ extends to an isomorphism $G_\RR \otimes \CC \to G_\CC$. Again, we can consider such real groups up to automorphisms or up to inner automorphisms. The two definitions agree, and agree with the fixed points of an anti-holomorphic involution, for all our examples besides the groups $\mathrm{SO}(2m,\HH)$; we will state a remark about this point.
 
 In any case, there is an explicit list of representatives of real forms of $G_\CC$. For our statements, a real form of a classical group will be an element of \Cref{table:real_forms}. In the rest of the section, we recall the definitions of the groups of the table. For details on the classification at the level of Lie algebras, see for example the books of Knapp \cite{knapp_lie_2002} , Helgason \cite{helgason_geometric_2008}, or Onishchik and Vinberg \cite{onishchik_lie_1994}. 

 \begin{table}[htb]
 	\centering
 	\begin{tabular}{|r|ll|}
 		\hline
 		Complex group $G_\CC$ & Real forms $G_\RR \subset G_\CC$ & \\
 		\hline
 		$\mathrm{GL}(2m+1,\CC)$ & $\mathrm{GL}(2m+1,\RR) , \mathrm{U}(p,q)$ &(for $p\geq q$ and $p+q = 2m+1$)\\
 		$\mathrm{GL}(2m,\CC)$ & $\mathrm{GL}(2m,\RR) , \mathrm{GL}(m,\HH), \mathrm{U}(p,q)$ &(for $p\geq q$ and $p+q = 2m$)\\
 		$\mathrm{SL}(2m+1,\CC)$ & $\mathrm{SL}(2m+1,\RR) , \mathrm{SU}(p,q)$ &(for $p\geq q$ and $p+q = 2m+1$)\\
 		$\mathrm{SL}(2m,\CC)$ & $\mathrm{SL}(2m,\RR) , \mathrm{SL}(m,\HH), \mathrm{SU}(p,q)$ &(for $p\geq q$ and $p+q = 2m$)\\
 		$\mathrm{Sp}(2m,\CC)$ & $\mathrm{Sp}(2m,\RR) , \mathrm{Sp}(2p,2q)$ &(for $p\geq q$ and $p+q = m$)\\
 		$\mathrm{O}(2m+1,\CC)$ & $ \mathcal{O}(p,q)$& (for $p\geq q$ and $p+q = 2m+1$)\\
 		$\mathrm{O}(2m,\CC)$ & $ \mathcal{O}(p,q) , \mathrm{O}(m,\HH)$& (for $p\geq q$ and $p+q = 2m$)\\
 		$\mathrm{SO}(2m+1,\CC)$ & $\mathcal{SO}(p,q)$& (for $p\geq q$ and $p+q = 2m+1$)\\
 		$\mathrm{SO}(4m+2,\CC)$ & $ \mathcal{SO}(p,q) , \mathrm{SO}(2m+1,\HH)$& (for $p\geq q$ and $p+q = 4m+2$)\\
 		$\mathrm{SO}(4m,\CC)$ & $ \mathcal{SO}(p,q) , \mathrm{SO}(2m,\HH) , \mathrm{SO}^{-}(2m,\HH)$& (for $p\geq q$ and $p+q = 4m$)\\
 		\hline
 	\end{tabular}
 	\caption{Real forms of classical Lie groups} \label{table:real_forms}
 \end{table}

\subsubsection{Linear groups and special linear groups}
If $n$ is odd, the real forms of $\mathrm{GL}(n,\CC)$ up to inner automorphism are $\mathrm{GL}(n,\RR)$ and the unitary groups $\mathrm{U}(p,q)$, where $p \geq q$ are non-negative integers such that $p+q = n$. These last groups are the groups preserving a Hermitian form with signature $(p,q)$. If $n=2m$ is even, the same groups are real forms of $\mathrm{GL}(n,\CC)$, but there is another one: the quaternionic group $\mathrm{GL}(m,\HH)$.
The real forms of $\mathrm{SL}(n,\CC)$ are the intersections of the real forms of $\mathrm{GL}(n,\CC)$ with $\mathrm{SL}(n,\CC)$. We let them be denoted $\mathrm{SL}(n,\RR)$, $\mathrm{SU}(p,q)$ and, when $n=2m$ is even, $\mathrm{SL}(m,\HH)$.
The groups are defined as follows:
\begin{align*}
	\mathrm{U}(p,q) &= \{M \in \mathrm{GL}(n,\CC) \mid \transp{\conjug{M}}I_{p,q}M = I_{p,q} \} 
	&
	\mathrm{SU}(p,q) &= \mathrm{U}(p,q) \cap \mathrm{SL}(n,\CC)
	\\
	\mathrm{GL}(m,\HH) &= \{ M \in \mathrm{GL}(2m , \CC) \mid MJ = J\conjug{M}  \}
	&
	\mathrm{SL}(m,\HH) &= \mathrm{GL}(m,\HH) \cap \mathrm{SL}(2m , \CC)
\end{align*}

The compact forms of $\mathrm{GL}(n,\CC)$ and $\mathrm{SL}(n,\CC)$ are respectively $\mathrm{U}(n,0)$, and $\mathrm{SU}(n,0)$, found also as $\mathrm{U}(n)$ and $\mathrm{SU}(n)$, while the split forms are $\mathrm{GL}(n,\RR)$ and $\mathrm{SL}(n,\RR)$.

\begin{rem}
	The matrices of $\mathrm{GL}(m,\HH)$ are the invertible matrices of the form $
	\begin{pmatrix}
	A & -\conjug{B} \\
	B & \conjug{A}
	\end{pmatrix}
	$
	for $A,B \in \mathcal{M}_m(\CC)$. Hence, if $M \in \mathrm{GL}(m,\HH)$, then $\det(M)$ is a positive real number. See for example \cite[Proposition 4.2]{ZhangQuaternionsmatricesquaternions1997} for a proof.
\end{rem}

\subsubsection{Symplectic groups}
For $n \in \NN$ and $p,q \in \NN$ such that $p\geq q$ and $p+q = n$ we define the groups $\mathrm{Sp}(2n,\RR)$ and $\mathrm{Sp}(2p,2q)$ as follows:
\begin{align*}
	\mathrm{Sp}(2n,\RR) &= \left\{M \in \mathrm{Sp}(2n,\CC)  \mid M \in \mathcal{M}_n(\RR)  \right\} \\
	\mathrm{Sp}(2p,2q) &= \left\{M \in \mathrm{Sp}(2n,\CC)  \mid \transp{\conjug{M}}K_{p,q}M = K_{p,q} \right\}
\end{align*} 

The real forms of the complex Lie group $\mathrm{Sp}(2n,\CC)$ up to inner automorphism are precisely the groups $\mathrm{Sp}(2n,\RR)$ and $\mathrm{Sp}(2p,2q)$. The compact form of the group is $\mathrm{Sp}(2n,0)$, denoted sometimes $\mathrm{Sp}(n)$ or $\mathrm{USp}(n)$, while the split form is $\mathrm{Sp}(2n,\RR)$. 
We conclude this subsection by two remarks that might be useful later.

\begin{rem}
	We have
	$\mathrm{Sp}(2n,0) = \mathrm{Sp}(2n,\CC) \cap \mathrm{U}(2n) = \mathrm{Sp}(2n,\CC) \cap \mathrm{SL}(n,\HH) = \mathrm{SL}(n,\HH) \cap \mathrm{U}(2n)$.
\end{rem}

\begin{rem}
	If $G$ is a real form of $\mathrm{Sp}(2n,\CC)$, then $G$ preserves a Hermitian form.
\end{rem}
\begin{proof}
	The remark is immediate for the groups $\mathrm{Sp}(2p,2q)$, which are defined as preserving the Hermitian form of matrix $K_{p,q}$. Since a matrix $M\in \mathrm{Sp}(2n,\RR)$ is real, it satisfies $\transp{\conjug{M}}JM = J$, and therefore $\transp{\conjug{M}}(iJ)M = iJ$. since the matrix $iJ$ is Hermitian, the group $\mathrm{Sp}(2n,\RR)$ preserves the Hermitian form $iJ$, which has signature $(n,n)$.
\end{proof}

\subsubsection{Orthogonal groups}

The real forms of $\mathrm{O}(n, \CC)$ up to isomorphism  are the groups $\mathrm{O}(p,q)$ for $p+q = n$ when $n$ is odd. These are the real groups preserving a real quadratic form of signature $(p,q)$. If $n=2m$ is even, there is another real form, namely the quaternionic group $\mathrm{O}(m,\HH)$. The real forms of the group $\mathrm{SO}(n,\CC)$, up to isomorphism, are the intersections with $\mathrm{SL}(n,\CC)$ of the real forms of $\mathrm{O}(n,\CC)$.
These groups are defined as follows:
\begin{align*}
	\mathrm{O}(p,q) &= \{M\in\mathrm{GL}(n,\RR) \mid \transp{M}I_{p,q}M = I_{p,q} \}
	&
	\mathrm{SO}(p,q) &= \mathrm{O}(p,q) \cap \mathrm{SL}(n,\CC)
	\\
	\mathrm{O}(m,\HH) &= \{M\in\mathrm{O}(n,\CC) \mid JM = \conjug{M}J \}  
	&
	\mathrm{SO}(m,\HH) &= \mathrm{O}(m,\HH) \cap \mathrm{SL}(2m , \CC)
\end{align*}

We let $\mathrm{O}(n)$ and $\mathrm{SO}(n)$ denote the groups $\mathrm{O}(n,0)$ and $\mathrm{SO}(n,0)$; they are the compact forms of $\mathrm{O}(n,\CC)$ and $\mathrm{SO}(n,\CC)$. The split forms are $\mathrm{O}(m+1,m)$ and $\mathrm{SO}(m,m+1)$ if $n=2m+1$ is odd and $\mathrm{O}(m,m)$ and $\mathrm{SO}(m,m)$ if $n=2m$ is even. Notice also that the notation for the group $\mathrm{SO}(m,\HH)$ is not standard, other common notations for the group are $\mathrm{SO}^*(2n)$ are $\mathrm{SU}^*(2n)$.

 \begin{rem}
 	Since the matrices in $\mathrm{GL}(n,\mathbb{H})$ have positive determinant, with our definition we have $\mathrm{O}(n,\HH) = \mathrm{SO}(n,\HH)$. We will keep the most common notation $\mathrm{SO}(n,\HH)$ for the group, but we will consider it at the same time as a real form of $\mathrm{O}(2n,\CC)$ and of $\mathrm{SO}(2n,\CC)$. It is, indeed, the set of fixed points of an anti-holomorphic involution of $\mathrm{O}(2n,\CC)$, but $\mathrm{SO}(n,\HH)\otimes \CC \simeq \mathrm{SO}(2n,\CC)$.
 \end{rem}

Since the definitions above do not give subgroups of $\mathrm{O}(n,\CC)$ and $\mathrm{SO}(n,\CC)$ for some real forms, we need to set some more notation in order to work with actual subgroups. We will keep the usual definition of the groups $\mathrm{O}(p,q)$ and $\mathrm{SO}(p,q)$. However, we will consider the copies of these groups obtained by conjugation by a matrix $D_{p,q}$ in order to deal with subgroups of $\mathrm{O}(n,\CC)$.  Let $D_{p,q}$ be the matrix
\begin{align*}
	D_{p,q}
	&=
	\begin{pmatrix}
		I_p & 0 \\
		0 & iI_q
	\end{pmatrix}.
\end{align*}
Observe that $\transp{D_{p,q}}I_{p,q}D_{p,q} = I_{2n+1}$.
Thus, we define the groups $\mathcal{O}_{p,q}$ and $\mathcal{SO}_{p,q}$ as follows:
\begin{align*}
	\mathcal{O}_{p,q} &= \left\{ D_{p,q}MD_{p,q}^{-1} \mid M \in \mathrm{O}(p,q) \right\} \subset \mathrm{O}(n,\CC) \\
	\mathcal{SO}_{p,q} &= \left\{ D_{p,q}MD_{p,q}^{-1} \mid M \in \mathrm{SO}(p,q) \right\} \subset \mathrm{SO}(n,\CC).
\end{align*}

 Finally, we will need to define an extra real group in dimension $n = 4m$. 
 	Let $P_0 \in \mathrm{O}(4m,\CC) \setminus  \mathrm{SO}(4m,\CC)$ be a matrix fixed once and for all. We define
 	$\mathrm{SO}^-(2m,\HH)$
 	as follows
 	\[
 	\mathrm{SO}^-(2m,\HH) = \{ P_0 M P_0^{-1} \mid M \in \mathrm{SO}(2m,\HH)   \}
 	\]
 \begin{rem}
  By definition, the groups $\mathrm{SO}(2m,\HH)$ and $\mathrm{SO}^{-}(2m,\HH)$ are conjugated in $\mathrm{O}(4m,\CC)$. However, they are not conjugated in $\mathrm{SO}(4m,\CC)$.
 \end{rem}	
\begin{proof}
	If they were, there would be a matrix $K \in \mathrm{O}(4m,\CC)$ with determinant $-1$ that stabilizes $\mathrm{SO}(2m,\HH)$ by conjugation. Then, for all $M \in \mathrm{SO}(2m,\HH)$, we have $\conjug{(KMK^{-1})}J = J (KMK^{-1})$, so
	$\conjug{M}(\conjug{K}^{-1}JK) = (\conjug{K}^{-1} J K) M$.
	Hence, $(\conjug{K}^{-1} J K)J^{-1}$ commutes with $\mathrm{SO}(2m,\HH)$, and therefore is a homothety. Thus, there is $\lambda \in \CC$ such that $\conjug{K}^{-1} J K = \lambda J$. Since $\conjug{K}^{-1} J K \in \mathrm{O}(4m,\CC)$, we know that $\lambda = \pm 1$.
	
	If $\lambda = 1$, then $K \in \mathrm{SO}(2m,\HH)$ and has positive determinant, which is a contradiction.
	Now suppose that $\lambda=-1$, so $\conjug{K} J = -J K$. Consider the matrix by blocks
	$K' =
	\begin{pmatrix}
	0 & I_{2m} \\
	I_{2m} & 0
	\end{pmatrix}
	$. It is real, has determinant $1$ and satisfies $K'J = -JK'$. Thus, $(KK')J = J\conjug{(KK')}$. Hence, $KK' \in \mathrm{GL}(2m,\HH)$ and has positive determinant. Since $\det(K')=1$, we have $\det(K)>0$, which is also a contradiction.
\end{proof} 

 Observe that in dimension $4m+2$, there is no need for considering two copies of $\mathrm{SO}(2m+1,\HH)$, since they are all conjugated in $\mathrm{SO}(4m+2,\CC)$.
 Indeed, the matrix $
 \begin{pmatrix}
 0 & I_{2m+1} \\
 I_{2m+1} & 0
 \end{pmatrix}
 $
 is orthogonal, has determinant $-1$ and stabilizes $\mathrm{SO}(2m+1,\HH)$ by conjugation.

\section{Character varieties for classical groups}

In this section, we describe the construction and state some facts about the character varieties for classical groups, namely $\mathrm{GL}(n,\CC)$, $\mathrm{SL}(n,\CC)$, $\mathrm{Sp}(2n,\CC)$, $\mathrm{O}(n,\CC)$ and $\mathrm{SO}(n,\CC)$. For the latter two, we will need to distinguish the cases where $n$ is even or odd. Observe that the classical groups are all algebraic subgroups of some $\mathrm{GL}(n,\CC)$.

Let $\Gamma$ be a finitely generated group, and $G_\CC$ be a classical group. If $\Gamma$ is generated by $\{\gamma_1, \dots , \gamma_s \}$, we can consider
$\mathrm{Hom}(\Gamma,G_\CC)$ as an algebraic set
by identifying it to the image of the map
\[
\begin{aligned}
\mathrm{Hom}(\Gamma,G_\CC) &\to (G_\CC)^s\\
\rho & \mapsto (\rho(\gamma_1), \dots \rho(\gamma_s))
\end{aligned}
\]
Indeed, the map is injective since $\{\gamma_1, \dots , \gamma_s \}$ generates $\Gamma$; and the image is an algebraic set, since the relations of the group become polynomial equations. A change on the set of generators leads to a polynomial map between the two images, so the algebraic structure of $\mathrm{Hom}(\Gamma,G_\CC)$ is well defined and independent from the choice of generators.

We would like to consider representations up to conjugation in $G_\CC$. The group $G_\CC$ acts on $\mathrm{Hom}(\Gamma,G_\CC)$ by conjugation, but the usual quotient $\mathrm{Hom}(\Gamma,G_\CC)/G_\CC$ does not behave well in general; in general it is not a Hausdorff space.
The $G_\CC$-character variety of $\Gamma$ is defined as the quotient of $\mathrm{Hom}(\Gamma  ,G_\CC)$ by the conjugation action of $G_\CC$, but in the category of algebraic sets instead of the category of topological spaces.
The Geometric Invariant Theory (GIT) theory ensures that this quotient is well defined and its ring of functions is the ring of functions of $\mathrm{Hom}(\Gamma  ,G_\CC)$ which are invariant by conjugation by elements of $G_\CC$.
We let $\mathrm{Hom}(\Gamma,G_\CC)//G_\CC$ or $\mathcal{X}_{G_\CC}(\Gamma)$ denote it.
For a complete construction, see for example the review of Sikora
\cite{sikora_character_2012a}.

However, it is sometimes useful to consider the character variety as a possible non reduced affine scheme, as described also in
\cite{sikora_character_2012a}
 or
\cite{marche_sl2character_2016}.
Let $\CC [\mathrm{Hom}(\Gamma ,G_\CC)]$ be the ring of functions of $\mathrm{Hom}(\Gamma ,G_\CC)$ and $\CC [\mathrm{Hom}(\Gamma ,G_\CC)]^{G_\CC}$ the ring of invariant functions by the action by conjugation of $G_\CC$. Then, let $\mathfrak{X}_{G_\CC} (\Gamma)$ be the affine scheme $\mathrm{Spec}(\CC [\mathrm{Hom}(\Gamma ,G_\CC)]^{G_\CC})$.

Following Sikora in \cite{sikora_character_2012a} or the background given by Heusener in
\cite{heusener_slnc_2016}, we will mainly consider character varieties as algebraic sets.
The character variety $\mathcal{X}_{G_\CC}(\Gamma)$, as an algebraic set, is the set of complex points of $\mathfrak{X}_{G_\CC}(\Gamma)$ and corresponds to the set of closed orbits of $\mathrm{Hom}(\Gamma,G_\CC)$ for the action of $G_\CC$ by conjugation. These orbits are precisely the ones of poly-stable representations. In the terms of Sikora in \cite{sikora_character_2012a}, the poly-stable representations are precisely the \emph{completely reducible} ones. Thus, above each point of the character variety $\mathcal{X}_{G_\CC}(\Gamma)$ there is a unique completely reducible representation up to conjugacy.

 When the group $G_\CC$ is $\mathrm{GL}(n,\CC)$, this notion coincides with the semi-simplicity of a representation; we use the same terminology as Heusener in \cite{heusener_slnc_2016}. We say that $\rho \in \mathrm{Hom}(\Gamma, \mathrm{GL}(n,\CC) )$ is \emph{irreducible} if the only stable subspaces of $\CC^n$ by $\Im(\rho)$ are $\CC^n$ and $\{0\}$. We say that a representation $\rho \in \mathrm{Hom}(\Gamma, \mathrm{GL}(n,\CC) )$ is \emph{semi-simple} if it is a direct sum of irreducible representations. Since the classical complex groups $G_\CC$ are defined as subgroups of some $\mathrm{GL}(n,\CC)$, we will say that a representation $\rho \in \mathrm{Hom}(\Gamma,G_\CC)$ is \emph{irreducible} if it is irreducible as a representation in $\mathrm{Hom}(\Gamma, \mathrm{GL}(n,\CC) )$. This definition implies other usual notions of irreducible, such as not being contained in a proper parabolic subgroup.

Besides this section, we will always consider the character varieties as affine algebraic sets, and not as schemes. However, a generating set for the algebra of invariant functions gives coordinates for the character  variety $\mathcal{X}_{G_\CC}(\Gamma)$. Indeed, if $\{f_1, \dots , f_k\}$ is a generating set, the character variety $\mathcal{X}_{G_\CC}(\Gamma)$ is identified with the image of the map
\[
\begin{aligned}
\mathrm{Hom}(\Gamma,G_\CC) &\to \CC^k\\
\rho & \mapsto (f_1(\rho), \dots , f_k(\rho))
\end{aligned}
\]
Observe that the map factors through $\mathcal{X}_{G_\CC}(\Gamma)$ since the functions $f_i$ are invariant by conjugation. The map gives therefore an injective polynomial map from $\mathcal{X}_{G_\CC}(\Gamma)$ to $\CC^k$. For $\rho \in \mathrm{Hom}(\Gamma,G_\CC)$, we let $\chi_\rho$ denote the image of rho in $\mathcal{X}_{G_\CC}(\Gamma)$. We will often identify it, abusively, with the corresponding point of $\CC^k$ when choosing coordinates for the character variety.

Before describing the coordinates for character varieties for each classical group, we need to make a last observation. If $\Gamma$ is generated by $s$ elements and $F_s$ is the free group of rank $s$, then $\mathrm{Hom}(\Gamma,G_\CC) \subset \mathrm{Hom}( F_s, G_\CC )$ is Zariski-closed. Since the conjugation by $G_\CC$ is the same for both representation varieties, we have an inclusion $\mathcal{X}_{G_\CC}(\Gamma) \subset \mathcal{X}_{G_\CC}(F_s)$. Thus, a set of coordinate functions for $F_s$-representations will also be a set of coordinate functions for $\Gamma$-representations.

In the rest of the section, we will describe generating sets of functions for $\mathfrak{X}_{G_\CC}(F_s)$, and thus coordinates for the character varieties $\mathcal{X}_{G_\CC}(\Gamma)$.

\subsection{$G_\CC$-character varieties}
 We consider here the character varieties for almost all the classical groups. Namely, in this subsection $G_\CC$ will be one of the groups $\mathrm{GL}(n,\CC)$, $\mathrm{SL}(n,\CC)$, $\mathrm{O}(n,\CC)$, $\mathrm{Sp}(2n,\CC)$ and $\mathrm{SO}(2m+1,\CC)$. The remaining cases are the even orthogonal groups $\mathrm{SO}(2m,\CC)$, that we discuss in the next section.
 
 In the case considered here, the trace functions play a capital role, since the algebra of invariant functions is generated by them. 
 Consider the trace functions defined as follows:
 
\begin{defn}
	For $\gamma \in \Gamma$, we call the \emph{trace function of $\gamma$} the function
	\[\tau_\gamma: 
	\begin{aligned}
	\mathrm{Hom}(\Gamma,G_\CC) &\to \CC\\
	\rho &\mapsto \mathrm{tr}(\rho(\gamma))
	\end{aligned}\]
	It is an invariant function for $\mathrm{GL}(n,\CC)$-conjugation.
\end{defn}

 We have also the following theorem, about the algebra of invariant functions in the case of representations of free groups. 

\begin{thm}
	Let $m,n \in \NN$. If $F_m$ is the free group of order $m$, then the ring $\CC  [\mathrm{Hom}(F_m , G_\CC ) ]^{G_\CC}$ is generated by a finite number of trace functions.
\end{thm}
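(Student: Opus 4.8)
The plan is to identify $\mathrm{Hom}(F_m,G_\CC)$ with $G_\CC^m$ --- a homomorphism out of a free group being determined by arbitrary images of the generators --- so that the statement becomes a \emph{First Fundamental Theorem} for the simultaneous conjugation action of $G_\CC$ on $G_\CC^m$. Each $G_\CC$ in the list is a linearly reductive algebraic group (in characteristic $0$), so Hilbert--Nagata guarantees that $\CC[G_\CC^m]^{G_\CC}$ is a finitely generated $\CC$-algebra; hence, once I show that the trace functions $\tau_\gamma$ generate this algebra, a finite generating subset of them is automatically extracted. The real content is therefore to prove generation by the $\tau_\gamma$, $\gamma \in F_m$.

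First I would treat $G_\CC = \mathrm{GL}(n,\CC)$. The engine is Procesi's theorem on matrix invariants: $\CC[\mathcal{M}_n(\CC)^m]^{\mathrm{GL}(n,\CC)}$ is generated by the traces $\mathrm{tr}(A_{i_1}\cdots A_{i_k})$ of words in the matrices. To pass from $\mathcal{M}_n(\CC)^m$ to $\mathrm{GL}(n,\CC)^m$, which is only \emph{open} in it, I would use the closed equivariant embedding $\mathrm{GL}(n,\CC)\hookrightarrow \mathcal{M}_n(\CC)\times\CC$, $A\mapsto(A,\det(A)^{-1})$, with $G_\CC$ acting by conjugation on the first factor and trivially on the second. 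Invariants of the ambient space are then generated by the traces of words together with the coordinates $\det(A_i)^{-1}$. Restricting to the closed invariant subvariety $\mathrm{GL}(n,\CC)^m$ --- where restriction of invariants is surjective by reductivity --- and noting that $\det(A)^{-1}=\det(A^{-1})$ is, via Newton's identities, a polynomial in the traces $\mathrm{tr}(A^{-k})$, I conclude that $\CC[\mathrm{GL}(n,\CC)^m]^{\mathrm{GL}(n,\CC)}$ is generated by trace functions $\tau_\gamma$.

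The groups $\mathrm{O}(n,\CC)$ and $\mathrm{Sp}(2n,\CC)$ are handled by the analogous First Fundamental Theorem for matrices with involution: for the bilinear form $B$ preserved by $G_\CC$ ($B=I$ orthogonally, $B=J$ symplectically), the algebra $\CC[\mathcal{M}_n(\CC)^m]^{G_\CC}$ is generated by traces $\mathrm{tr}(w)$ of words $w$ in the $A_i$ and their $B$-adjoints $A_i^\star=B^{-1}\transp{A_i}B$. The crucial simplification is that $A\in G_\CC$ if and only if $A^\star=A^{-1}$; hence, restricting these invariants to the closed invariant subvariety $G_\CC^m\subset\mathcal{M}_n(\CC)^m$, every $A_i^\star$ becomes $A_i^{-1}$ and each generator becomes a trace of a word in the $A_i^{\pm1}$, i.e.\ a trace function $\tau_\gamma$. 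The two remaining groups reduce to these: for $\mathrm{SL}(n,\CC)$, conjugation factors through $\mathrm{PGL}(n,\CC)$ and the surjection $\mathrm{SL}(n,\CC)\twoheadrightarrow\mathrm{PGL}(n,\CC)$ shows its invariants agree with the restriction of the $\mathrm{GL}(n,\CC)$-invariants; and for $\mathrm{SO}(2m+1,\CC)$ the decomposition $\mathrm{O}(2m+1,\CC)=\{\pm I\}\times\mathrm{SO}(2m+1,\CC)$ with $\pm I$ central makes $\mathrm{O}$- and $\mathrm{SO}$-conjugation coincide, so the orthogonal traces suffice. This is precisely why $\mathrm{SO}(2m,\CC)$, where $-I$ has determinant $+1$ and Pfaffian-type invariants intervene, is postponed to the next section.

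The main obstacle is the invariant-theoretic input itself: the First Fundamental Theorems for the orthogonal and symplectic groups are substantial classical results, and the cleanest route is to cite them (Procesi, Weyl) rather than reprove them. The only genuinely new verifications are bookkeeping --- that the $B$-adjoint restricts to inversion on the group, that $\det^{-1}$ is polynomial in traces, and that the reductions for $\mathrm{SL}(n,\CC)$ and $\mathrm{SO}(2m+1,\CC)$ preserve generation by traces --- after which finiteness follows automatically from Hilbert--Nagata.
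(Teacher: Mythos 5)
Your proposal is correct and follows essentially the same route as the paper, which simply cites Procesi's Theorems 1.3, 7.1 and 10.1 for $\mathrm{GL}(n,\CC)$, $\mathrm{SL}(n,\CC)$, $\mathrm{O}(n,\CC)$ and $\mathrm{Sp}(2n,\CC)$, and Sikora's reduction for $\mathrm{SO}(2n+1,\CC)$; your sketch just reconstructs the content of those references (the First Fundamental Theorems for matrix invariants with and without involution, surjectivity of restriction to the closed invariant subvariety $G_\CC^m$, and the central-element tricks for $\mathrm{SL}(n,\CC)$ and $\mathrm{SO}(2m+1,\CC)$). The details you supply, including the $\det^{-1}$ coordinate and the observation that the $B$-adjoint restricts to inversion on the group, are all sound.
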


The result is proved by Procesi in \cite{procesi_invariant_1976} for the groups $\mathrm{SL}(n,\CC)$ and $\mathrm{GL}(n,\CC)$ (Theorem 1.3), $O_n(\CC)$ (Theorem 7.1) and $\mathrm{Sp}(2n,\CC)$ (Theorem 10.1).
In \cite{sikora_generating_2013}, Sikora shows that for a finitely generated group $\Gamma$ and $n\in \NN$, the ring of functions of the $\mathrm{SO}(2n+1,\CC)$-character varieties of $\Gamma$ are generated by the trace functions, as a consequence of the results of Procesi.
More precisely, in the case of $\mathrm{GL}(n,\CC)$ and $\mathrm{SL}(n,\CC)$ Procesi gives in \cite[Theorem 3.4]{procesi_invariant_1976} an explicit set of generators by considering the words of length $\leq 2^n-1$ in the generators of $F_m$. In the same article, he also gives generating sets for orthogonal and symplectic character varieties with the same type of bounds on length. In \cite{sikora_generating_2013}, Sikora gives more efficient generating sets.

Therefore, if $\gamma_1,\dots,\gamma_s \in \Gamma$ are such that $\tau_{\gamma_1}, \dots , \tau_{\gamma_s}$ generate $\CC [\mathrm{Hom}(F_m ,\mathrm{GL}(n,\CC)]^{G_\CC}$, then we can identify $\mathcal{X}_{G_\CC}(\Gamma)$ with the image of the map
\[
\begin{array}{rcl}
\mathrm{Hom}(\Gamma, G_\CC) &\to& \CC^s \\
\rho &\mapsto& (\mathrm{tr}(\rho(\gamma_1)), \dots , \mathrm{tr}(\rho(\gamma_s))
\end{array}
\]

Since we have an inclusion of character varieties of finitely generated groups into character varieties of free groups, the result also holds for finitely generated groups.

\subsection{SO(2n,C)-character varieties}
For $\mathrm{SO}(2n,\CC)$-representations, the set of trace functions does no longer generate the ring of invariant functions, as pointed out by Sikora in \cite{sikora_so2nccharacter_2017}. However, following \cite{sikora_generating_2013}, generating sets for the algebra of invariant functions are still known in this case.
Consider the function $Q_{2n} : (\mathcal{M}_{2n}(\CC))^n \to \CC$, defined by
\[
Q_{2n}(A_1,\dots,A_m) = 
\sum_{\sigma \in \mathfrak{S}_{2m}} 
\epsilon(\sigma)
\prod_{i=1}^{m}
((A_i)_{\sigma(2i-1)\sigma(2i)}
-(A_i)_{\sigma(2i)\sigma(2i-1)}).
\]

If $P, A_1, \dots , A_m \in \mathrm{O}(2m,\CC)$, then $Q_m(PA_1P^{-1},\dots,PA_mP^{-1}) =\det(P)Q_m(A_1,\dots,A_m)$. In particular, the function $Q_m$ is invariant under $\mathrm{SO}(2n,\CC)$-conjugation, but not by  $\mathrm{O}(2n,\CC)$-conjugation. Furthermore (see \cite[Proposition 10]{sikora_generating_2013}), $Q_{2n}$ is the full polarization of the Pfaffian, so $Q_{2n}(A,\dots,A) = 2^n n! \mathrm{Pf}(A - {}^{t}\!A)$.

Now, let $\gamma_1,\dots,\gamma_n \in \Gamma$. Since the function $Q_{2n}$ is invariant under $\mathrm{SO}(2n,\CC)$-conjugation, it induces an invariant function $Q_{\gamma_1,\dots,\gamma_n}$ on $\mathrm{Hom}(\Gamma,\mathrm{SO}(2n,\CC))$ defined by
\[Q_{\gamma_1,\dots,\gamma_n}(\rho) = Q_{2n}(\rho(\gamma_1),\dots,\rho(\gamma_n)).\]

The following theorem, due to Sikora, gives a generating set for the invariant functions in the $\mathrm{SO}(2m,\CC)$ case.

\begin{thm}(\cite[Theorem 8]{sikora_generating_2013})
	
	The ring $\CC[ \mathrm{Hom}(\Gamma,\mathrm{SO}(2n,\CC))]^{\mathrm{SO}(2n,\CC)}$ is generated by the trace functions and by the functions $Q_{\gamma_1,\dots,\gamma_n}$ for $\gamma_1,\dots,\gamma_n \in \Gamma$.
\end{thm}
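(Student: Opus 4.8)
The plan is to prove the statement in the style of a First Fundamental Theorem for $\mathrm{SO}(2n,\CC)$, deducing it from the corresponding result for $\mathrm{O}(2n,\CC)$ that is already available through Procesi. As in the rest of the section, I would first reduce to the case of a free group: by the inclusion $\mathcal{X}_{G_\CC}(\Gamma) \subset \mathcal{X}_{G_\CC}(F_s)$ it suffices to describe the invariants of tuples $(A_1,\dots,A_s) \in \mathrm{SO}(2n,\CC)^s$ under simultaneous conjugation. The crucial structural observation is that $\mathrm{SO}(2n,\CC)$ is the kernel of the character $\det\colon \mathrm{O}(2n,\CC)\to\{\pm1\}$, so that the quotient $\mathrm{O}(2n,\CC)/\mathrm{SO}(2n,\CC)\cong\ZZ/2\ZZ$ acts on the ring $R^{\mathrm{SO}}$ of $\mathrm{SO}$-invariants and splits it as a direct sum
\[
R^{\mathrm{SO}} = R^{\mathrm{O}} \oplus S,
\]
where $R^{\mathrm{O}}$ is the subring of $\mathrm{O}$-invariants and $S$ is the module of $\det$-relative invariants, i.e.\ the functions $f$ with $f(P^{-1}\rho P)=\det(P)f(\rho)$ for $P\in\mathrm{O}(2n,\CC)$. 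By Procesi's theorem the first summand $R^{\mathrm{O}}$ is generated by trace functions, and the stated transformation rule $Q_{2n}(PA_1P^{-1},\dots)=\det(P)Q_{2n}(A_1,\dots)$ shows that each $Q_{\gamma_1,\dots,\gamma_n}$ lies in $S$. Since a product of two $\det$-relative invariants is an $\mathrm{O}$-invariant, $S$ is a module over $R^{\mathrm{O}}$, and it therefore remains to prove that $S$ is generated over $R^{\mathrm{O}}$ by the functions $Q_{\gamma_1,\dots,\gamma_n}$.

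For this I would use the tensor/contraction description of invariants. Identifying $\CC^{2n}$ with its dual through the bilinear form $I$, each matrix becomes an element of $\CC^{2n}\otimes\CC^{2n}$, and every $\mathrm{SO}$-invariant is a linear combination of complete contractions of the $A_i$ performed with two kinds of tensors: the form $I$ and the volume form $\epsilon\in\bigwedge^{2n}(\CC^{2n})^*$. Contractions using only $I$ produce exactly the $\mathrm{O}$-invariants, hence trace functions of words $\rho(\gamma)$ (for a matrix in $\mathrm{O}(2n,\CC)$ the transpose equals the inverse, so words in the $A_i$ and $\transp{A_i}$ are words in $\rho(\gamma^{\pm1})$). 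An element of $S$ must involve an odd number of copies of $\epsilon$. The key reduction is the classical identity expressing $\epsilon\otimes\epsilon$ as the fully antisymmetrized product of $2n$ copies of $I$ (the generalized Kronecker delta): it lets me remove copies of $\epsilon$ two at a time, so that modulo $R^{\mathrm{O}}$ every element of $S$ is an $R^{\mathrm{O}}$-combination of contractions involving a \emph{single} $\epsilon$.

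The final and most delicate step is to recognize a single-$\epsilon$ contraction as a trace-function multiple of some $Q_{\gamma_1,\dots,\gamma_n}$. Since $\epsilon$ is totally antisymmetric, contracting two of its legs against the two indices of one matrix factor forces the antisymmetrization $(A)_{ab}-(A)_{ba}$ that appears in the definition of $Q_{2n}$; grouping the $2n$ legs of $\epsilon$ into $n$ such pairs and folding the remaining $I$-contractions into the words attached to each pair yields precisely a product of trace functions times a single $Q_{\gamma_1,\dots,\gamma_n}$, where each $\gamma_i\in\Gamma$ is the word read off the corresponding block. This is consistent with the identification $Q_{2n}(A,\dots,A)=2^n n!\,\mathrm{Pf}(A-\transp{A})$, i.e.\ with $Q_{2n}$ being the full polarization of the Pfaffian of the antisymmetrization. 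I expect the main obstacle to be exactly this combinatorial normal-form argument: one must show that the legs of the single $\epsilon$ can always be reorganized so that they attach in antisymmetrized pairs to $n$ word-blocks $\rho(\gamma_i)$, with everything else absorbed into $I$-contractions, rather than into some a priori more complicated $\epsilon$-contraction. Making this reorganization canonical — and checking that it does not secretly reintroduce further copies of $\epsilon$ — is the technical heart of the proof, and is where I would lean on the second fundamental theorem for the orthogonal group to control the relations among contractions.
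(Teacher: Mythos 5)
The paper does not prove this statement at all: it is imported verbatim as Theorem~8 of Sikora's \emph{Generating sets for coordinate rings of character varieties}, so there is no in-paper argument to compare yours against. Judged on its own, your sketch is essentially the standard first-fundamental-theorem argument that underlies Sikora's proof (which in turn rests on Procesi's orthogonal FFT and the Aslaksen--Tan--Zhu description of $\mathrm{SO}(n)$-invariants of matrices), and its skeleton is sound: the eigenspace decomposition $R^{\mathrm{SO}}=R^{\mathrm{O}}\oplus S$ under the residual $\ZZ/2\ZZ$-action, the observation that $S$ is an $R^{\mathrm{O}}$-module containing the $Q$'s, the reduction of an odd number of $\epsilon$'s to a single one via $\epsilon\otimes\epsilon=\det(\delta_{i_kj_l})$, and the identification of a single-$\epsilon$ contraction with a product of traces times one $Q_{\gamma_1,\dots,\gamma_n}$ are all correct steps. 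Two points deserve tightening. First, you should justify passing from invariants of $\mathcal{M}_{2n}(\CC)^s$ (where the tensorial FFT lives) to invariants of the subvariety $\mathrm{SO}(2n,\CC)^s$; this uses linear reductivity of $\mathrm{SO}(2n,\CC)$ to see that every invariant of the subvariety extends, and it is also where $\transp{A_i}=A_i^{-1}$ turns arbitrary words into elements of $\rho(\Gamma)$. Second, the ``technical heart'' you worry about is more routine than you fear: once the $I$-contractions are maximally absorbed, each of the $2n$ legs of the lone $\epsilon$ is a free end of a word, every word has exactly two free ends (a bare $I$ joining two legs of $\epsilon$ kills the term by antisymmetry), so exactly $n$ words attach and the contraction is, up to the constant $2^{-n}$ and a product of closed-loop traces, precisely $Q_{w_1,\dots,w_n}$; no appeal to the second fundamental theorem is needed, since you are proving a spanning statement, not controlling relations.
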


As a consequence of the previous theorem, we can use the trace functions and the functions $Q_{\gamma_1,\dots,\gamma_n}$ as coordinates for the character variety $\mathcal{X}_{\mathrm{SO(2n,\CC)}}(\Gamma)$.

\subsection{Maps between character varieties}

By the previous sections, we know that if $G_1$ and $G_2$ are two groups among $\mathrm{GL}(n,\CC)$, $\mathrm{SL}(n,\CC)$, $\mathrm{O}(n,\CC)$, $\mathrm{Sp}(2n,\CC)$ or $\mathrm{SO}(2n+1,\CC)$, the rings of invariant functions $\CC[\mathrm{Hom}(\Gamma,G_1)]^{G_1}$ and $\CC[\mathrm{Hom}(\Gamma,G_2)]^{G_2}$ are generated by the trace functions.
Hence, if $G_1 \subset G_2$ is a minimal inclusion, the inclusion $\mathrm{Hom}(\Gamma,G_1) \hookrightarrow \mathrm{Hom}(\Gamma,G_2)$ induces a surjective map $\CC[\mathrm{Hom}(\Gamma,G_2)]^{G_2} \twoheadrightarrow \CC[\mathrm{Hom}(\Gamma,G_1)]^{G_1}$.
Therefore, by considering the spectrum, we obtain the embedding of schemes $\mathfrak{X}_{G_1}(\Gamma) \hookrightarrow \mathfrak{X}_{G_2}(\Gamma)$.

Consider first the case of odd dimension, and the subgroups of $\mathrm{GL}(2n+1,\CC)$. In this case, the ring of invariant functions is generated by trace functions for all the classical subgroups. We have the following commutative diagram of representations:

\[
\begin{array}{ccc}
\mathrm{Hom}(\Gamma,\mathrm{SL}(2n+1,\CC)) & \hookrightarrow & \mathrm{Hom}(\Gamma,\mathrm{GL}(2n+1,\CC)) \\
\hookuparrow & & \hookuparrow \\
\mathrm{Hom}(\Gamma,\mathrm{SO}(2n+1,\CC)) & \hookrightarrow & \mathrm{Hom}(\Gamma,\mathrm{O}(2n+1,\CC))
\end{array}
\]

Therefore, it induces the following commutative diagram of character varieties, where all the arrows are embeddings:

\[
\begin{array}{ccc}
\mathcal{X}_{\mathrm{SL}(2n+1,\CC)}(\Gamma) & \hookrightarrow & \mathcal{X}_{\mathrm{GL}(2n+1,\CC)}(\Gamma) \\
\hookuparrow & & \hookuparrow \\
\mathcal{X}_{\mathrm{SO}(2n+1,\CC)}(\Gamma) & \hookrightarrow & \mathcal{X}_{\mathrm{O}(2n+1,\CC)}(\Gamma)
\end{array}
\]

The situation is the same in even dimension, except from the case $\mathrm{SO}(2n,\CC)$, for which the ring of invariant functions is not generated by trace functions. However, the trace functions are invariant by $\mathrm{SO}(2n,\CC)$-conjugation, so there are natural maps $\CC[\mathrm{Hom}(\Gamma,\mathrm{SL}(2n,\CC))]^{\mathrm{SL}(2n,\CC)} \rightarrow \CC[\mathrm{Hom}(\Gamma,\mathrm{SO}(2n,\CC))]^{\mathrm{SO}(2n,\CC)}$ and $\CC[\mathrm{Hom}(\Gamma,\mathrm{O}(2n,\CC))]^{\mathrm{O}(2n,\CC)} \rightarrow \CC[\mathrm{Hom}(\Gamma,\mathrm{SO}(2n,\CC))]^{\mathrm{SO}(2n,\CC)}$.
The situation of the representation varieties is described by the following commutative diagram:
\[
\begin{array}{ccccc}
\mathrm{Hom}(\Gamma,\mathrm{Sp}(2n,\CC)) & \hookrightarrow & \mathrm{Hom}(\Gamma,\mathrm{SL}(2n,\CC)) & \hookrightarrow & \mathrm{Hom}(\Gamma,\mathrm{GL}(2n,\CC)) \\
& & \hookuparrow & & \hookuparrow \\
& & \mathrm{Hom}(\Gamma,\mathrm{SO}(2n,\CC)) & \hookrightarrow & \mathrm{Hom}(\Gamma,\mathrm{O}(2n,\CC))
\end{array}
\]

Therefore, we have the following diagram for the corresponding character varieties, where all the arrows are embeddings, except from the ones with source $\mathcal{X}_{\mathrm{SO}(2n,\CC)}(\Gamma)$.

\[
\begin{array}{ccccc}
\mathcal{X}_{\mathrm{Sp}(2n,\CC)}(\Gamma) & \hookrightarrow & \mathcal{X}_{\mathrm{SL}(2n,\CC)}(\Gamma) & \hookrightarrow & \mathcal{X}_{\mathrm{GL}(2n,\CC)}(\Gamma) \\
& & \uparrow & & \hookuparrow \\
& & \mathcal{X}_{\mathrm{SO}(2n,\CC)}(\Gamma) & \rightarrow & \mathcal{X}_{\mathrm{O}(2n,\CC)}(\Gamma)
\end{array}
\]

The two maps with source $\mathcal{X}_{\mathrm{SO}(2n,\CC)}(\Gamma)$ are at most $2:1$. Indeed, if $\rho_1, \rho_2 \in \mathrm{Hom}(\Gamma,\mathrm{SO}(2n,\CC))$ are semi-simple representations with the same image in $\mathcal{X}_{\mathrm{O}(2n,\CC)}(\Gamma)$, then they are conjugated in $\mathrm{O}(2n,\CC)$. Consider a fixed matrix $M \in \mathrm{O}(2n,\CC) \setminus \mathrm{SO}(2n,\CC)$ and let $P \in \mathrm{O}(2n,\CC)$ such that for all $\gamma \in \Gamma$, $P\rho_1(\gamma)P^{-1} = \rho_2(\gamma)$. Since $\mathrm{SO}(2n,\CC)$ is of index two in $\mathrm{O}(2n,\CC)$, we have two cases. If $P \in \mathrm{SO}(2n,\CC)$, then $\rho_1$ and $\rho_2$ define the same point in $\mathcal{X}_{\mathrm{SO}(2n,\CC)}(\Gamma)$. If $P \notin \mathrm{SO}(2n,\CC)$, then $PM \in \mathrm{SO}(2n,\CC)$ and $\rho_2$ is conjugated to the representation $\gamma \mapsto M\rho_1(\gamma)M^{-1}$, and thus defines the same point in $\mathcal{X}_{\mathrm{SO}(2n,\CC)}(\Gamma)$. Hence the arrow $\mathcal{X}_{\mathrm{SO}(2n,\CC)} \to \mathcal{X}_{\mathrm{O}(2n,\CC)}$ is at most $2:1$.

Since the diagram is commutative and the two other arrows are embeddings, we deduce that the arrow $\mathcal{X}_{\mathrm{SO}(2n,\CC)} \to \mathcal{X}_{\mathrm{SL}(2n,\CC)}$ is at most $2:1$ as well.

\subsection{Definition of character varieties for real forms and anti-holomorphic involutions}

\subsubsection{Character varieties for real forms}
In \cite{acosta_character_2019}, we defined the character variety of  a finitely generated group $\Gamma$ for a real form $G$ of $\mathrm{SL}(n,\CC)$ as the image of $\mathrm{Hom}(\Gamma,G)$ by the projection $\mathrm{Hom}(\Gamma,\mathrm{SL}(n,\CC)) \twoheadrightarrow \mathcal{X}_{\mathrm{SL}(n,\CC)}(\Gamma)$. Following the same idea, we give a definition for the character variety for a real form of a classical complex group. Observe that, by definition, $\mathcal{X}_{G_\RR}(\Gamma)$ is embedded into $\mathcal{X}_{G_\CC}(\Gamma)$.

\begin{defn}
	Let $n$ be a positive integer, $\Gamma$ a finitely generated group and $G_{\CC}$ be one of the groups $\mathrm{GL}(n,\CC)$, $\mathrm{SL}(n,\CC)$, $\mathrm{O}(n,\CC)$, $\mathrm{SO}(n,\CC)$ or $\mathrm{Sp}(2n,\CC)$. Let $G_{\RR}$ be a real form of $G_\CC$. We define the \emph{$G_\RR$-character variety of $\Gamma$}, denoted $\mathcal{X}_{G_\RR}(\Gamma)$, as the image of $\mathrm{Hom}(\Gamma,G_\RR)$ by the projection $\mathrm{Hom}(\Gamma,G_\CC) \twoheadrightarrow \mathcal{X}_{G_\CC}(\Gamma)$.
\end{defn}

Now, we consider the anti-holomorphic involutions of the different character varieties induced by the involutions $A \mapsto \conjug{A}$ and $A \mapsto \transp{\conjug{A}}^{-1}$ of $\mathrm{GL}(n,\CC)$. We will have to deal with two different cases; depending on the fact that the invariant functions are generated by trace functions or not. We observe that the character varieties for real forms, as defined above, lie in the fixed points of these anti-holomorphic involutions.

\subsubsection{When the ring of functions is generated by trace functions}

We consider here the first case, which holds if $G_\CC$ is one of the groups $\mathrm{GL}(n,\CC)$, $\mathrm{SL}(n,\CC)$, $\mathrm{O}(n,\CC)$, $\mathrm{SO}(2n+1,\CC)$ or $\mathrm{Sp}(2n,\CC)$. In this case, the ring of invariant functions of $\mathrm{Hom}(\Gamma,G_\CC)$ is generated by trace functions.

\begin{prop}
	Let $G_\CC$ be one of the groups $\mathrm{GL}(n,\CC)$, $\mathrm{SL}(n,\CC)$, $\mathrm{O}(n,\CC)$, $\mathrm{SO}(2n+1,\CC)$ or $\mathrm{Sp}(2n,\CC)$. Then:
	\begin{enumerate}
		\item The involution $A \mapsto \conjug{A}$ induces an anti-holomorphic involution $\Phi_1$ on $\mathcal{X}_{G_\CC}(\Gamma)$.
		\item The involution $A \mapsto \transp{\conjug{A}}^{-1}$ induces an anti-holomorphic involution $\Phi_2$ on $\mathcal{X}_{G_\CC}(\Gamma)$.
		\item If $G_\CC$ is one of the groups  $\mathrm{O}(n,\CC)$, $\mathrm{SO}(2n+1,\CC)$ or $\mathrm{Sp}(2n,\CC)$, then $\Phi_1 = \Phi_2$.
	\end{enumerate}
\end{prop}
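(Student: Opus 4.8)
The plan is to realize each of the two involutions as an anti-holomorphic automorphism of the group $G_\CC$ itself and then to transport it through the trace-coordinate description of $\mathcal{X}_{G_\CC}(\Gamma)$. Write $\iota_1(A) = \conjug{A}$ and $\iota_2(A) = \transp{\conjug{A}}^{-1}$, and recall $A^* = \transp{\conjug{A}}$. First I would check that each $\iota_k$ maps $G_\CC$ into itself: for $\iota_1$ this is immediate because complex conjugation commutes with the determinant and with the defining relations $\transp{M}M = I$ and $\transp{M}JM = J$, the matrices $I$ and $J$ being real; for $\iota_2$ one verifies directly that $\transp{\conjug{A}}^{-1}$ satisfies the same relations, the symplectic case using $J^{-1} = -J$. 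Both maps are anti-holomorphic, and both are group homomorphisms: $\iota_1$ because $\conjug{AB} = \conjug{A}\,\conjug{B}$, and $\iota_2$ because $((AB)^*)^{-1} = (A^*)^{-1}(B^*)^{-1}$. One also notes $\iota_k^2 = \mathrm{id}$.

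Next I would let each $\iota_k$ act on representations by post-composition, $\rho \mapsto \iota_k \circ \rho$, which again lands in $\mathrm{Hom}(\Gamma,G_\CC)$ since $\iota_k$ is a homomorphism preserving $G_\CC$. The point that makes this descend is compatibility with conjugation: because $\iota_k$ is a homomorphism, $\iota_k\circ(P\rho P^{-1}) = \iota_k(P)\,(\iota_k\circ\rho)\,\iota_k(P)^{-1}$ with $\iota_k(P)\in G_\CC$, so conjugate representations are sent to conjugate representations, and thus $\Phi_k(\chi_\rho) := \chi_{\iota_k\circ\rho}$ is a well-defined self-map of $\mathcal{X}_{G_\CC}(\Gamma)$. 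Reading this off in the trace coordinates $(\tau_{\gamma_1},\dots,\tau_{\gamma_s})$, I would use $\mathrm{tr}(\conjug{A}) = \conjug{\mathrm{tr}(A)}$ and $\mathrm{tr}((A^*)^{-1}) = \conjug{\mathrm{tr}(A^{-1})}$ to get $\tau_\gamma(\iota_1\circ\rho) = \conjug{\tau_\gamma(\rho)}$ and $\tau_\gamma(\iota_2\circ\rho) = \conjug{\tau_{\gamma^{-1}}(\rho)}$. In coordinates $\Phi_1$ is plain coordinatewise complex conjugation, while $\Phi_2$ is given by polynomials in the complex conjugates of the coordinates (expressing each $\tau_{\gamma_i^{-1}}$ as a polynomial in the generators); both are therefore anti-holomorphic, and $\iota_k^2 = \mathrm{id}$ gives $\Phi_k^2 = \mathrm{id}$. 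This proves items (1) and (2).

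For item (3), the observation is that on the form-preserving groups the two involutions differ only by an inner automorphism, which acts trivially on the character variety. Let $B$ be the defining form, so $B = I$ for $\mathrm{O}(n,\CC)$ and $\mathrm{SO}(2n+1,\CC)$ and $B = J$ for $\mathrm{Sp}(2n,\CC)$; in both cases $B$ is real. For $A \in G_\CC$ the relation $\transp{A}BA = B$ rearranges to $\transp{A}^{-1} = BAB^{-1}$, whence, using that $B$ is real,
\[
\iota_2(A) = \transp{\conjug{A}}^{-1} = \conjug{\transp{A}^{-1}} = \conjug{BAB^{-1}} = B\,\conjug{A}\,B^{-1} = B\,\iota_1(A)\,B^{-1}.
\]
Thus $\iota_2 = \mathrm{Inn}_B \circ \iota_1$ with $B \in G_\CC$, so $\iota_2\circ\rho$ and $\iota_1\circ\rho$ are conjugate in $G_\CC$ for every $\rho$ and define the same point of $\mathcal{X}_{G_\CC}(\Gamma)$; hence $\Phi_1 = \Phi_2$. (For the orthogonal groups $B = I$ and the two involutions are literally equal as maps of $G_\CC$.)

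The steps needing care, rather than genuine difficulty, are the verification that $\iota_2$ preserves $G_\CC$ in the symplectic case and the clean bookkeeping of the descent to the GIT quotient. The only content-bearing identity is $\transp{A}^{-1} = BAB^{-1}$ together with the reality of $B$, which is exactly what forces $\Phi_1$ and $\Phi_2$ to coincide for the form-preserving groups and explains why this collapse does not occur for $\mathrm{GL}(n,\CC)$ and $\mathrm{SL}(n,\CC)$.
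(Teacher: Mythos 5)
Your proof is correct and follows essentially the same route as the paper: both arguments rest on the stability of $G_\CC$ under the two involutions and on reading off their effect in trace coordinates, and your identity $\iota_2(A)=B\,\iota_1(A)\,B^{-1}$ for part (3) is the same computation the paper performs at the level of traces. The only cosmetic difference is that the paper enlarges the generating set so that it is closed under $\gamma\mapsto\gamma^{-1}$, making $\Phi_2$ a coordinate permutation composed with conjugation, whereas you express each $\tau_{\gamma^{-1}}$ as a polynomial in the generating traces; both yield anti-holomorphicity.
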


\begin{proof}
	Let us begin by proving the first two assertions.
	We know that the ring of invariant functions of $\mathrm{Hom}(\Gamma,G_\CC)$ is generated by a finite number of trace functions. Hence, there exist $\gamma_1,\dots,\gamma_s \in \Gamma $ such that $\mathcal{X}_{G_\CC}(\Gamma)$ is isomorphic to the image of the map
	\[
	\Psi \colon 
	\begin{array}{rcl}
	\mathrm{Hom}(\Gamma,G_\CC) &\to& \CC^s \\
	\rho &\mapsto& (\mathrm{tr}(\rho(\gamma_1)), \dots , \mathrm{tr}(\rho(\gamma_s)) )
	\end{array}
	\]
	
	By considering a larger set, we can suppose that $s=2t$, that $\{\gamma_1 , \dots , \gamma_{2t}\}$ generates $\Gamma$ and for all $i \in \{1,\dots,t\}$ we have $\gamma_{i+t} = \gamma_i^{-1}$.
	
	Consider a point $\chi \in \mathcal{X}_{G_\CC}(\Gamma)$ and a representation $\rho \in \mathrm{Hom}(\Gamma,G_\CC)$ such that $\chi = \chi_\rho$.
	Since the groups $\mathrm{O}(n,\CC)$, $\mathrm{SO}(2n+1,\CC)$ and $\mathrm{Sp}(2n,\CC)$ are stable by the involutions $A \mapsto \conjug{A}$ and $A \mapsto \transp{\conjug{A}}^{-1}$, the representations $\rho_1$ and $\rho_2$ given by 
	$\rho_1(\gamma) = \conjug{\rho(\gamma)}$ and $\rho_2(\gamma) = \transp{\conjug{\rho(\gamma)}}^{-1}$ belong to $\mathrm{Hom}(\Gamma,G_\CC)$.
	Since for all $i \in \{1,\dots,t\}$ we have $\gamma_{i+t} = \gamma_i^{-1}$, if $\Psi(\rho) = (z_1,\dots , z_t , w_1 , \dots , w_t)$, then we have
	\begin{align*}
		\Psi(\rho_1) &= (\conjug{z_1},\dots , \conjug{z_t} , \conjug{w_1} , \dots , \conjug{w_t}) \\
		\Psi(\rho_2) &= ( \conjug{w_1} , \dots , \conjug{w_t},\conjug{z_1},\dots , \conjug{z_t}).
	\end{align*}
	Therefore, the maps
	\begin{align*}
		(z_1,\dots,z_t , w_1, \dots w_t) &\mapsto (\conjug{z_1},\dots , \conjug{z_t} , \conjug{w_1} , \dots , \conjug{w_t}) \\
		(z_1,\dots,z_t , w_1, \dots w_t) &\mapsto ( \conjug{w_1} , \dots , \conjug{w_t},\conjug{z_1},\dots , \conjug{z_t}).
	\end{align*}
	define two anti-holomorphic involutions $\Phi_1$ and $\Phi_2$ on $\mathcal{X}_{G_\CC}(\Gamma)$.
	
	Let us prove now the third assertion. If $G_\CC$ is one of the groups  $\mathrm{O}(n,\CC)$, $\mathrm{SO}(2n+1,\CC)$ or $\mathrm{Sp}(2n,\CC)$, then there is bilinear form $B$ preserved by $G_\CC$. Thus, if $A \in G_\CC$, we have $\transp{A}BA = B$, and $\conjug{B} \conjug{A} \conjug{B}^{-1} = \transp{\conjug{A}}^{-1}$, so $\conjug{A}$ and $\transp{\conjug{A}}^{-1}$ have the same trace. Hence, $\Phi_1 = \Phi_2$.
	
\end{proof}

We will keep the notation of the above proposition for the involutions $\Phi_1$ and $\Phi_2$. If the group $G_\CC$ is one of the groups  $\mathrm{O}(n,\CC)$, $\mathrm{SO}(2n+1,\CC)$ or $\mathrm{Sp}(2n,\CC)$, the two involutions are equal, and $\Phi$ denote them. 
Let $\mathcal{X}_{G_\CC}^{\Phi_1}(\Gamma)$ (respectively $\mathcal{X}_{G_\CC}^{\Phi_2}(\Gamma)$ and $\mathcal{X}_{G_\CC}^{\Phi}(\Gamma)$) be the set of fixed points of $\Phi_1$ (respectively $\Phi_2$ and $\Phi$) in $\mathcal{X}_{G_\CC}(\Gamma)$.

\begin{rem}
	We have the following inclusions:
	\begin{align*}
		\mathcal{X}_{\mathrm{GL}(n,\RR)}(\Gamma) & \subset \mathcal{X}_{\mathrm{GL}(n,\CC)}^{\Phi_1}(\Gamma) 
		&
		\mathcal{X}_{\mathrm{GL}(n,\HH)}(\Gamma) & \subset \mathcal{X}_{\mathrm{GL}(2n,\CC)}^{\Phi_1}(\Gamma) &
		\mathcal{X}_{\mathrm{U}(p,q)}(\Gamma) & \subset \mathcal{X}_{\mathrm{GL}(n,\CC)}^{\Phi_2}(\Gamma) \\
		\mathcal{X}_{\mathrm{SL}(n,\RR)}(\Gamma) & \subset \mathcal{X}_{\mathrm{SL}(n,\CC)}^{\Phi_1}(\Gamma) 
		&
		\mathcal{X}_{\mathrm{SL}(n,\HH)}(\Gamma) & \subset \mathcal{X}_{\mathrm{SL}(2n,\CC)}^{\Phi_1}(\Gamma) 
		&
		\mathcal{X}_{\mathrm{SU}(p,q)}(\Gamma) & \subset \mathcal{X}_{\mathrm{SL}(n,\CC)}^{\Phi_2}(\Gamma) \\
		\mathcal{X}_{\mathrm{Sp}(2n,\RR)}(\Gamma) &\subset  \mathcal{X}_{\mathrm{Sp}(2n,\CC)}^{\Phi}(\Gamma) 
		&
		\mathcal{X}_{\mathrm{Sp}(2p,2q)}(\Gamma) &\subset  \mathcal{X}_{\mathrm{Sp}(2n,\CC)}^{\Phi}(\Gamma) 
		&
		\mathcal{X}_{\mathrm{O}(p,q)}(\Gamma) & \subset \mathcal{X}_{\mathrm{O}(n,\CC)}^{\Phi}(\Gamma) \\
		\mathcal{X}_{\mathrm{O}(n,\HH)}(\Gamma) & \subset \mathcal{X}_{\mathrm{O}(2n,\CC)}^{\Phi}(\Gamma) 
		&
		\mathcal{X}_{\mathrm{SO}(p,q)}(\Gamma) & \subset \mathcal{X}_{\mathrm{SO}(n,\CC)}^{\Phi}(\Gamma) \text{ if $n$ is odd} 
	\end{align*}
\end{rem}

\subsubsection{The case SO(2n,C)}
	First, observe that in the group $\mathrm{SO}(2n,\CC)$, the involutions $A \mapsto \conjug{A}$ and $A \mapsto {}^{t}\!\conjug{A}^{-1}$ are equal, so we only need to consider one of them for the computations. 

\begin{prop}
	The involution $A \mapsto \conjug{A}$ induces an anti-holomorphic involution $\Phi$ on $\mathcal{X}_{\mathrm{SO}(2n,\CC)}(\Gamma)$.
\end{prop}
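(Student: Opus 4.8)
The plan is to exploit the explicit coordinates for $\mathcal{X}_{\mathrm{SO}(2n,\CC)}(\Gamma)$ furnished by the previous subsection, namely the trace functions $\tau_\gamma$ together with the functions $Q_{\gamma_1,\dots,\gamma_n}$, which by Sikora's theorem generate $\CC[\mathrm{Hom}(\Gamma,\mathrm{SO}(2n,\CC))]^{\mathrm{SO}(2n,\CC)}$. First I would note that the group $\mathrm{SO}(2n,\CC)$ is cut out by polynomial equations with real coefficients, so it is stable under $A \mapsto \conjug{A}$; consequently $\rho \mapsto \overline{\rho}$, where $\overline{\rho}(\gamma) := \conjug{\rho(\gamma)}$, is a well-defined self-map of $\mathrm{Hom}(\Gamma,\mathrm{SO}(2n,\CC))$. (Recall, as observed just before the statement, that on $\mathrm{SO}(2n,\CC)$ the involutions $A \mapsto \conjug{A}$ and $A \mapsto \transp{\conjug{A}}^{-1}$ coincide, so it is enough to treat $A \mapsto \conjug{A}$.)

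Fix a finite generating set $\{f_1,\dots,f_k\}$ of the invariant ring consisting of trace functions and functions $Q_{\gamma_1,\dots,\gamma_n}$, and use it to identify $\mathcal{X}_{\mathrm{SO}(2n,\CC)}(\Gamma)$ with the image of the injective coordinate map $\rho \mapsto (f_1(\rho),\dots,f_k(\rho))$ into $\CC^k$. The decisive step is the observation that every such generator is a polynomial with \emph{real} coefficients in the entries of the $\rho(\gamma_i)$: for a trace this is immediate from $\mathrm{tr}(\conjug{A}) = \conjug{\mathrm{tr}(A)}$, and for $Q_{\gamma_1,\dots,\gamma_n}$ one inspects the defining formula for $Q_{2n}$, whose only numerical data are the signs $\epsilon(\sigma)$ and the differences of matrix entries, all of which are compatible with conjugation. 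Hence $Q_{2n}(\conjug{A_1},\dots,\conjug{A_n}) = \conjug{Q_{2n}(A_1,\dots,A_n)}$, and therefore $f_j(\overline{\rho}) = \conjug{f_j(\rho)}$ for every generator $f_j$.

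I would then conclude as follows. In the chosen coordinates the candidate involution is simply coordinatewise complex conjugation $\CC^k \to \CC^k$, restricted to the image of the coordinate map. Since $\overline{\rho}$ again lies in $\mathrm{Hom}(\Gamma,\mathrm{SO}(2n,\CC))$, this image is stable under coordinatewise conjugation, so the map sends $\mathcal{X}_{\mathrm{SO}(2n,\CC)}(\Gamma)$ into itself; call it $\Phi$. Well-definedness on characters is automatic from the coordinate description: if $\chi_\rho = \chi_{\rho'}$ then all $f_j$ agree on $\rho$ and $\rho'$, hence so do their conjugates, giving $\chi_{\overline{\rho}} = \chi_{\overline{\rho'}}$. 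Coordinatewise conjugation is anti-holomorphic and visibly squares to the identity, so $\Phi$ is an anti-holomorphic involution.

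The one genuinely new point compared with the trace-generated case handled in the earlier proposition is the behaviour of the non-trace generator $Q_{\gamma_1,\dots,\gamma_n}$, so I expect the main obstacle to be the verification that it transforms by conjugation; once its real-coefficient nature is recorded the remainder of the argument mirrors the previous proof. A secondary point to keep in mind is that the transport of conjugation from $\CC^k$ back to a well-defined map of the character variety relies on the injectivity of the coordinate embedding, i.e.\ on the fact that the $f_j$ really generate the invariant ring.
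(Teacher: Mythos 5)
Your proposal is correct and follows essentially the same route as the paper: both arguments use Sikora's generating set of trace functions together with the functions $Q_{\gamma_1,\dots,\gamma_n}$ as coordinates, observe that $\mathrm{SO}(2n,\CC)$ is stable under $A\mapsto\conjug{A}$ and that both types of generators commute with complex conjugation, and conclude that coordinatewise conjugation on the image in $\CC^k$ defines the anti-holomorphic involution $\Phi$. No discrepancies to report.
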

\begin{proof}
	We know that the ring of invariant functions of $\mathrm{Hom}(\Gamma, \mathrm{SO}(2n,\CC) )$ is generated by a finite number of trace functions and of functions of the form $Q_{\gamma_1,\dots,\gamma_n}$. Let $\gamma_1, \dots , \gamma_m \in \Gamma$ and $c_1,\dots , c_s \in \Gamma^n$ such that $\tau_{\gamma_1}, \dots , \tau_{\gamma_m} , Q_{c_1},\dots , Q_{c_s}$ generates the ring of invariant functions.
	Hence, $\mathcal{X}_{\mathrm{SO}(2n,\CC)}(\Gamma)$ is isomorphic to th image of the map
	\[
	\Psi \colon 
	\begin{array}{rcl}
	\mathrm{Hom}(\Gamma,\mathrm{SO}(2n,\CC)) &\to& \CC^{m+s} \\
	\rho &\mapsto& (\tau_{\gamma_1}(\rho), \dots , \tau_{\gamma_m}(\rho) , Q_{c_1}(\rho), \dots Q_{c_s}(\rho) )
	\end{array}
	\]
	
	Let $\chi \in \mathcal{X}_{\mathrm{SO}(2n,\CC)}(\Gamma)$ and $\rho \in \mathrm{Hom}(\Gamma,\mathrm{SO}(2n,\CC))$ such that $\chi = \chi_\rho$. Since $\mathrm{SO}(2n,\CC)$ is stable by $A \mapsto \conjug{A}$, the representation $\conjug{\rho}$ defined by $\conjug{\rho}(\gamma)$ takes values in $\mathrm{SO}(2n,\CC)$.
	Since the trace and the function $Q_{2n}$ commute with the complex conjugation, if $\Psi(\rho) = (z_1, \dots , z_m , w_1, \dots , w_s)$, then $\Psi(\conjug{\rho}) = (\conjug{z_1}, \dots , \conjug{z_m} , \conjug{w_1}, \dots , \conjug{w_s})$. Therefore, the map
	\[
	(z_1, \dots , z_m , w_1, \dots , w_s)
	\mapsto
	(\conjug{z_1}, \dots , \conjug{z_m} , \conjug{w_1}, \dots , \conjug{w_s})
	\]
	induces an anti-holomorphic involution of $\mathcal{X}_{\mathrm{SO}(2n,\CC)}(\Gamma)$.
\end{proof}

Now, observe that we have the same inclusions of character varieties as before.
\begin{rem}
	We have the following inclusions:
	\begin{align*}
		\mathcal{X}_{\mathrm{SO}(p,q)}(\Gamma) & \subset \mathcal{X}_{\mathrm{SO}(2n,\CC)}^{\Phi}(\Gamma) 
		&
		\mathcal{X}_{\mathrm{SO}(n,\HH)}(\Gamma) & \subset \mathcal{X}_{\mathrm{SO}(2n,\CC)}^{\Phi}(\Gamma)
		&
		\mathcal{X}_{\mathrm{SO}^{-}(n,\HH)}(\Gamma) & \subset \mathcal{X}_{\mathrm{SO}(2n,\CC)}^{\Phi}(\Gamma)
	\end{align*}
\end{rem}

\section{Lifts of real irreducible characters}
In \cite{acosta_character_2019}, we proved the following result, by finding the appropriate conjugating matrices.
\begin{thm}(\cite[Theorem 1.1]{acosta_character_2019}) \label{thm:char_var_real_slnc}
	Let $\rho \in \mathrm{Hom}(\Gamma,\mathrm{SL}(n,\CC))$ be an irreducible representation. Then:
	\begin{enumerate}
		\item If $\chi_\rho \in \mathcal{X}^{\Phi_1}_{\mathrm{SL}(n,\CC)}(\Gamma)$, then $\rho$ is conjugated to a representation with values in $\mathrm{SL}(n,\RR)$ or to a representation with values in $\mathrm{SL}(n/2,\HH)$ (if $n$ is even).
		\item If $\chi_\rho \in \mathcal{X}^{\Phi_2}_{\mathrm{SL}(n,\CC)}(\Gamma)$, then $\rho$ is conjugated to a representation with values in a $\mathrm{SU}(p,q)$.
	\end{enumerate}
\end{thm}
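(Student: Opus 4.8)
The plan is to handle the two involutions in parallel, using in each case the same two ingredients. First, since $\rho$ is irreducible and the relevant matrix involutions preserve irreducibility, the representation $\rho_i$ obtained by applying the involution pointwise is again irreducible; as the hypothesis $\chi_\rho\in\mathcal{X}^{\Phi_i}_{\mathrm{SL}(n,\CC)}(\Gamma)$ says precisely that $\chi_\rho=\chi_{\rho_i}$, the uniqueness of the completely reducible representation above a point of the character variety gives a matrix $P$ conjugating $\rho$ to $\rho_i$. Second, I use Schur's lemma: any matrix commuting with $\Im(\rho)$ is scalar. The whole argument is then a descent from this scalar constraint to an explicit real structure.

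For $\Phi_1$ I obtain $\conjug{\rho(\gamma)}=P\rho(\gamma)P^{-1}$ for all $\gamma$. Conjugating this identity once more shows that $\conjug{P}P$ commutes with $\Im(\rho)$, so $\conjug{P}P=\lambda I$ for some $\lambda\in\CC^*$; comparing with its own conjugate forces $\lambda\in\RR$, and replacing $P$ by $\mu P$ multiplies $\lambda$ by $\abs{\mu}^2>0$, so the sign of $\lambda$ is an honest invariant and I may take $\lambda=\pm1$. I then encode the data in the antilinear map $J(v)=P^{-1}\conjug{v}$, which is $\Im(\rho)$-equivariant and satisfies $J^2=\lambda^{-1}I$. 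When $\lambda=1$, $J$ is a real structure: its fixed real subspace is $\rho$-stable and a real basis of it conjugates $\rho$ to real entries, giving values in $\mathrm{SL}(n,\RR)$ (the determinant stays $1$). When $\lambda=-1$, $J^2=-I$ is a quaternionic structure, which forces $n$ even and makes $\CC^n$ an $\HH$-module on which $\rho$ acts $\HH$-linearly, yielding values in $\mathrm{SL}(n/2,\HH)$.

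For $\Phi_2$ I obtain $\transp{\conjug{\rho(\gamma)}}^{-1}=P\rho(\gamma)P^{-1}$, equivalently $\rho(\gamma)^*=P\rho(\gamma)^{-1}P^{-1}$, which says exactly that $\rho$ preserves the form with matrix $P$. Taking adjoints in this identity and applying Schur's lemma shows that $(P^*)^{-1}P$ is scalar, so $P^*=\mu P$ with $\abs{\mu}=1$; absorbing a square root of $\mu$ into $P$ makes $P$ Hermitian without altering the relation above. Thus $\rho$ preserves a nondegenerate Hermitian form $P$ of some signature $(p,q)$ with $p+q=n$; diagonalizing it by Sylvester's law of inertia, i.e. choosing $Q$ with $Q^*PQ=I_{p,q}$, conjugates $\rho$ into $\mathrm{U}(p,q)$, and the determinant-one condition lands it in $\mathrm{SU}(p,q)$.

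The routine verifications, namely the equivariance of $J$, the fact that $\conjug{P}P$ and $(P^*)^{-1}P$ commute with $\Im(\rho)$, and the two form computations, are all short. I expect the only genuinely delicate point to be the scalar analysis that selects the real form: for $\Phi_1$, establishing that the sign of $\lambda$ is a true invariant that no rescaling of $P$ can remove and that $\lambda<0$ is incompatible with $n$ odd; and for $\Phi_2$, pinning down $\abs{\mu}=1$ so that $P$ can be normalized to be Hermitian. These are exactly the steps where the global data of the real form, real versus quaternionic on the one hand, and the signature $(p,q)$ on the other, is determined.
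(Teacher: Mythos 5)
Your argument is correct and is essentially the paper's own route: the paper cites its earlier article for this theorem but reproduces the key step (conjugating $\rho$ to $\Phi\circ\rho$ and using Schur's lemma to show $P\Phi(P)$ is a homothety) in the proposition that follows, and the descent you carry out — normalizing the scalar to $\pm1$ to get a real versus quaternionic structure for $\Phi_1$, and normalizing $P$ to a Hermitian matrix of some signature for $\Phi_2$ — is exactly the "finding the appropriate conjugating matrices" the paper alludes to. No gaps; the routine verifications you defer (equivariance of $J$, standardization of the quaternionic structure, Sylvester diagonalization) are indeed routine.
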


 The proof provided in \cite{acosta_character_2019} works mutatis mutandis for $\mathrm{GL}(n,\CC)$-character varieties, so we will consider that the theorem above holds also in this case. In order to complete the proof of \Cref{thm:main_thm}, it only remains to prove similar statements for symplectic and orthogonal groups. Recall that, in these cases, we say that a representation is \emph{irreducible} if it is irreducible as a representation with values in $\mathrm{GL}(n,\CC)$. Observe that, in fact, we could consider \emph{good} representations as defined by Sikora in \cite[p. 5185]{sikora_character_2012a}, since the only consequence of irreducibility that is used is that the commutator of the image of an irreducible representation is the center of the group.
 
 For completeness, we give a general argument for proving a statement similar to \Cref{thm:main_thm}, but where a real form of a complex group is interpreted as an anti-holomorphic involution of the group.
 We will then give elementary proofs of the symplectic and orthogonal cases in Subsections \ref{subsect:symplectic} and \ref{subsect:orthogonal}, giving hints on how to find an explicit conjugation to a fixed real form.
 
 Let $G_\CC$ be a classical complex group and $\Phi$ be an anti-holomorphic $G_\CC$, inducing an involution $\Phi$ on the character variety $\mathcal{X}_{G_\CC}(\Gamma)$.
 If $\rho \in \mathrm{Hom}(\Gamma,G_\CC)$ is an irreducible representation such that $\chi_\rho \in \mathcal{X}^{\Phi}_{G_\CC}(\Gamma)$, then $\rho$ and $\Phi \circ \rho$ are irreducible representations that have the same character, and are therefore conjugate in $G_\CC$. The following proposition allows to conclude.
 
 \begin{prop}
 	Let $\Phi$ be an anti-holomorphic involution of a complex algebraic group $G_\CC$ defined over $\RR$.
 	Let $\rho \in \mathrm{Hom}(\Gamma, G_\CC)$ be an irreducible representation such that $\rho$ and $\Phi \circ \rho$ are conjugated in $G_\CC$. Then there is an anti-holomorphic involution of $G_\CC$ fixing the image of $\rho$.
 \end{prop}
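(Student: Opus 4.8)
The plan is to build the required involution by hand, twisting $\Phi$ by the conjugacy relating $\rho$ to $\Phi\circ\rho$. Since $\rho$ and $\Phi\circ\rho$ are conjugate in $G_\CC$, I would first fix $P\in G_\CC$ with $P\rho(\gamma)P^{-1}=\Phi(\rho(\gamma))$ for all $\gamma\in\Gamma$, and then set
\[
\Psi\colon G_\CC\to G_\CC,\qquad \Psi(A)=P^{-1}\Phi(A)P .
\]
As a composite of the anti-holomorphic group automorphism $\Phi$ with an inner automorphism, $\Psi$ is again an anti-holomorphic group automorphism of $G_\CC$; and it fixes the image of $\rho$ pointwise, because $\Psi(\rho(\gamma))=P^{-1}\big(P\rho(\gamma)P^{-1}\big)P=\rho(\gamma)$. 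So the whole statement reduces to checking that $\Psi$ is an \emph{involution}.

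To control $\Psi^2$, I would compute it directly: using that $\Phi$ is a group homomorphism with $\Phi^2=\mathrm{id}$, one gets $\Psi^2(A)=Q^{-1}AQ$ with $Q:=\Phi(P)P$, i.e. $\Psi^2$ is conjugation by $Q$. The key point is then to show that $Q$ acts trivially. Applying $\Phi$ to the relation $P\rho(\gamma)P^{-1}=\Phi(\rho(\gamma))$ and substituting back yields that $Q=\Phi(P)P$ commutes with $\rho(\gamma)$ for every $\gamma$. Since $\rho$ is irreducible as a representation into $\mathrm{GL}(n,\CC)$, Schur's lemma forces $Q=\lambda I$ to be scalar; conjugation by a scalar is the identity, so $\Psi^2=\mathrm{id}$ and $\Psi$ is the desired anti-holomorphic involution fixing $\rho(\Gamma)$.

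The only real obstacle is the verification that $\Psi^2=\mathrm{id}$; all the rest is formal bookkeeping. The crucial observation is that the ``defect'' $Q=\Phi(P)P$, which exactly measures the failure of $\Psi$ to square to the identity, is precisely an element commuting with the irreducible image, so Schur's lemma collapses it to a scalar. I would take care to note that $Q$ need not a priori lie in the centre of $G_\CC$, but being scalar it is central in all of $\mathrm{GL}(n,\CC)$, which is all that is needed for conjugation by $Q$ to be trivial on $G_\CC$. This is the same mechanism — a conjugating matrix pinned down up to a scalar by irreducibility — that underlies \Cref{thm:char_var_real_slnc}.
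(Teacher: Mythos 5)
Your argument is correct and is essentially the same as the paper's: the paper fixes $P$ with $\rho(\gamma)=P\Phi(\rho(\gamma))P^{-1}$, defines $\varphi(A)=P\Phi(A)P^{-1}$, and shows $\varphi^2$ is conjugation by $P\Phi(P)$, which commutes with the irreducible image and is therefore a homothety by Schur's lemma. Your $\Psi$ is this same map with $P$ replaced by $P^{-1}$, and your ``defect'' $Q=\Phi(P)P$ plays exactly the role of the paper's $P\Phi(P)$.
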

 
 \begin{proof}
 	Let $P \in G_\CC$ such that for all $\gamma \in \Gamma$ we have $\rho(\gamma) = P \Phi(\rho(\gamma)) P^{-1}$. We have
 	\begin{align*}
 	\rho(\gamma) &= P \Phi(\rho(\gamma)) P^{-1} \\
 	\Phi(\rho(\gamma)) &= \Phi(P) \rho(\gamma) \Phi(P)^{-1} \\
 	P^{-1} \rho(\gamma) P &= \Phi(P) \rho(\gamma) \Phi(P)^{-1} \\
 	\rho(\gamma) &= P\Phi(P) \rho(\gamma) (P\Phi(P))^{-1}
 	\end{align*}
 	Thus, $P\Phi(P)$ commutes to the whole image of $\rho$, so, by Schur's lemma, it is a homothety.
 	Consider now the map
 	\[ \varphi \colon 
 	\begin{aligned}
 	G_\CC &\to G_\CC \\
 	A &\mapsto P \Phi(A) P^{-1}
 	\end{aligned}
 	\]
 	On the one hand, for all $\gamma \in \Gamma$ we have $\varphi(\rho(\gamma)) = \rho(\gamma)$. On the other hand, $\varphi$ is an involution, since $\varphi^2(A) = P\Phi(P) A (P\Phi(P))^{-1} = A$. Since $\varphi$ is anti-holomorphic, it is an anti-holomorphic involution of $G_\CC$ fixing the image of $\rho$.
 \end{proof}
 
\subsection{Real irreducible symplectic characters}\label{subsect:symplectic}

We deal first with the symplectic case. We begin by stating some preliminary results, namely that irreducible symplectic representations preserve at most one symplectic form, and that if two of them are conjugated in $\mathrm{SL}(2n , \CC)$ then they are conjugated in $\mathrm{Sp}(2n,\CC)$. Recall that we say, in this article, that a representation $\rho \in \mathrm{Hom}(\Gamma, \mathrm{Sp}(2n,\CC) )$ is irreducible if it is irreducible as a representation with values in $\mathrm{GL}(2n,\CC)$.

\begin{lemme}\label{lemma:bilinear_forms_preserved}
	Let $\rho \in \mathrm{Hom}(\Gamma,\mathrm{GL}(n,\CC))$ be an irreducible representation. If the image of $\rho$ preserves two non degenerate bilinear forms with matrices $J_1$ and $J_2$, then there exists $\lambda \in \CC^{*}$ such that $J_1 = \lambda J_2$. 
\end{lemme}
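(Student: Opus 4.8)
The plan is to translate the statement about preserved bilinear forms into a statement about an intertwining endomorphism, and then apply Schur's lemma. Recall that a matrix $A$ in the image of $\rho$ preserves the bilinear form with matrix $J_k$ precisely when $\transp{A}J_k A = J_k$, equivalently $J_k A = \transp{A}^{-1} J_k$, i.e. $A J_k^{-1} \transp{A} = J_k^{-1}$ after rearranging. The key observation is that if both $J_1$ and $J_2$ are preserved, then the matrix $M = J_1^{-1} J_2$ should commute with the whole image of $\rho$.

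First I would write out the invariance conditions. For every $\gamma \in \Gamma$, setting $A = \rho(\gamma)$, we have $\transp{A} J_1 A = J_1$ and $\transp{A} J_2 A = J_2$. From the first equation, $\transp{A} = J_1 A^{-1} J_1^{-1}$. Substituting this into the second equation gives $J_1 A^{-1} J_1^{-1} J_2 A = J_2$, which rearranges to $A^{-1} (J_1^{-1} J_2) A = J_1^{-1} J_2$, that is, $(J_1^{-1} J_2) A = A (J_1^{-1} J_2)$. Hence $M = J_1^{-1} J_2$ commutes with $\rho(\gamma)$ for all $\gamma$, so $M$ is an intertwiner of the representation with itself.

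Next I would invoke irreducibility through Schur's lemma. Since $\rho$ is irreducible, its image has no non-trivial stable subspace of $\CC^n$, so any matrix commuting with the entire image is a scalar multiple of the identity. Therefore $M = J_1^{-1} J_2 = \lambda I$ for some $\lambda \in \CC$. Because $J_1$ and $J_2$ are non-degenerate (invertible), $M$ is invertible, forcing $\lambda \in \CC^{*}$. This yields $J_2 = \lambda J_1$, or equivalently $J_1 = \lambda^{-1} J_2$, which is the desired conclusion after renaming the scalar.

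I do not anticipate a serious obstacle here; the argument is essentially a direct Schur's lemma computation, and the paper has already flagged that the only consequence of irreducibility it uses is exactly that the commutant of the image is the center, i.e. the scalars. The one point requiring mild care is the algebraic manipulation ensuring that $J_1^{-1}J_2$ (rather than some other combination) is the matrix that commutes with the image; getting the transpose conventions right so that invariance of a bilinear form genuinely produces a two-sided commutation relation is the only place a sign or inverse could slip. Once that is verified, non-degeneracy immediately promotes the scalar to a non-zero scalar, completing the proof.
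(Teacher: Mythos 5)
Your argument is correct and is essentially identical to the paper's proof: both show that $J_1^{-1}J_2$ commutes with the whole image of $\rho$ (the paper via $J_1\rho(\gamma)J_1^{-1} = \transp{\rho(\gamma)}^{-1} = J_2\rho(\gamma)J_2^{-1}$, you via direct substitution) and then conclude by Schur's lemma that it is a nonzero scalar. The algebra and the handling of non-degeneracy are both sound.
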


\begin{proof}
	Let $\gamma \in \Gamma$. Since $\rho(\gamma)$ preserves the two bilinear forms, we have $\transp{\rho(\gamma)} J_1 \rho(\gamma) = J_1$ and $\transp{\rho(\gamma)} J_2 \rho(\gamma) = J_2$. Hence,
	$J_1 \rho(\gamma) J_1^{-1} = \transp{\rho(\gamma)}^{-1} = J_2 \rho(\gamma) J_2^{-1} $. We deduce that $J_1^{-1}J_2$ commutes with $\rho(\gamma)$. Therefore $J_1^{-1}J_2$ commutes to the whole image of $\rho$. Since $\rho$ is irreducible, there exists $\lambda \in \CC^{*}$ such that $J_1 = \lambda J_2$.
\end{proof}

\begin{lemme}
	Let $\rho_1,\rho_2 \in \mathrm{Hom}(\Gamma, \mathrm{Sp}(2n,\CC))$ be two irreducible representations. If they are conjugate in $\mathrm{SL}(2n , \CC)$, then they are conjugate in $\mathrm{Sp}(2n,\CC)$.
\end{lemme}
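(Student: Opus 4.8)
The plan is to take any $\mathrm{SL}(2n,\CC)$-conjugacy between $\rho_1$ and $\rho_2$ and rescale the conjugating matrix so that it becomes symplectic, using \Cref{lemma:bilinear_forms_preserved} to control the form it distorts. Concretely, let $P \in \mathrm{SL}(2n,\CC)$ satisfy $\rho_2(\gamma) = P\rho_1(\gamma)P^{-1}$ for all $\gamma \in \Gamma$. Since $\rho_2$ takes values in $\mathrm{Sp}(2n,\CC)$, each $\rho_2(\gamma)$ preserves the form $J$, that is $\transp{\rho_2(\gamma)} J \rho_2(\gamma) = J$. Substituting $\rho_2(\gamma) = P\rho_1(\gamma)P^{-1}$ and simplifying, I obtain $\transp{\rho_1(\gamma)}\, (\transp{P} J P)\, \rho_1(\gamma) = \transp{P} J P$ for every $\gamma$, so the image of $\rho_1$ preserves the non-degenerate skew-symmetric form with matrix $J' = \transp{P} J P$.

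Now $\rho_1$ also preserves $J$ itself. Since $\rho_1$ is irreducible and both $J$ and $J'$ are non-degenerate bilinear forms preserved by its image, \Cref{lemma:bilinear_forms_preserved} supplies a scalar $\lambda \in \CC^{*}$ with $\transp{P} J P = \lambda J$. The final step is a rescaling: set $Q = \mu P$, where $\mu$ is a square root of $\lambda^{-1}$. Then $\transp{Q} J Q = \mu^2 \transp{P} J P = \mu^2 \lambda J = J$, so $Q$ preserves $J$ and hence lies in $\mathrm{Sp}(2n,\CC)$ (the determinant being $1$ is automatic for a matrix preserving a symplectic form, by the Pfaffian identity $\mathrm{Pf}(\transp{Q} J Q) = \det(Q)\,\mathrm{Pf}(J)$, which forces $\det(Q)=1$). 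Because $Q$ differs from $P$ only by a central scalar, it still conjugates $\rho_1$ to $\rho_2$: indeed $Q\rho_1(\gamma)Q^{-1} = P\rho_1(\gamma)P^{-1} = \rho_2(\gamma)$ for all $\gamma$.

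The conceptual content of the argument is entirely packaged in \Cref{lemma:bilinear_forms_preserved}: once one knows that an irreducible representation preserves an essentially unique symplectic form, the passage from an $\mathrm{SL}(2n,\CC)$-conjugacy to an $\mathrm{Sp}(2n,\CC)$-conjugacy is just a scalar normalization. I therefore do not expect any serious obstacle here; the only point requiring a moment's care is confirming that the rescaled matrix genuinely lies in $\mathrm{Sp}(2n,\CC)$ rather than merely in the group of symplectic similitudes, and this is exactly what the choice $\mu^2 = \lambda^{-1}$ arranges.
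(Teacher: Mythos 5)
Your proof is correct and follows essentially the same route as the paper: apply \Cref{lemma:bilinear_forms_preserved} to the two symplectic forms $J$ and $\transp{P}JP$ preserved by $\rho_1$, then rescale the conjugating matrix by a square root of the resulting scalar. Your added observation that $\det(Q)=1$ is automatic via the Pfaffian is a welcome extra precision, since the rescaled matrix need not obviously remain in $\mathrm{SL}(2n,\CC)$.
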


\begin{proof}
	Let $M\in \mathrm{SL}(2n , \CC)$ such that for all $\gamma \in \Gamma$ we have $M\rho_1(\gamma)M^{-1} = \rho_2(\gamma)$. Thus, for all $\gamma \in \Gamma$,
	
	\begin{equation*}
		\transp{M}^{-1}\transp{\rho_1(\gamma)}\transp{M} J M \rho_1(\gamma) M^{-1} = J
	\end{equation*}
	
	Therefore, $\rho_1$ preserves the bilinear forms $J$ and $\transp{M}JM$. By \Cref{lemma:bilinear_forms_preserved}, there exists $\lambda \in \CC^*$ such that $\transp{M}JM = \lambda J$. Let $\mu$ be a square root of $\lambda$, and $M_1 = \frac{1}{\mu} M$. Then, $M_1$ conjugates $\rho_1$ and $\rho_2$, and $\transp{M_1}JM_1 = \frac{1}{\mu^2} \transp{M}JM= J$, i.e. $M \in \mathrm{Sp}(2n,\CC)$.
\end{proof}

In order to prove the main statement, we will use the following result of Fassbender and Ikramov:

\begin{prop} (\cite[Proposition 3]{fassbender_several_2005})\label{prop:hermitian+symplectic}
	Let $H \in \mathcal{M}_{2n}(\CC)$ be Hermitian and symplectic at the same time. Then, H admits a symplectic eigenvalue decomposition $H = VDV^*$	where the diagonal matrix $D$ and the unitary matrix $V$ are symplectic.
\end{prop}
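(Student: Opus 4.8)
The plan is to upgrade the unitary diagonalization of the Hermitian matrix $H$ into one whose transition matrix is simultaneously symplectic. First I would record the two structural facts coming from the hypotheses. Since $H=H^*$ it is unitarily diagonalizable with real eigenvalues; since $\transp H J H = J$ and $\transp H = \conjug H$, the eigenvalues occur in reciprocal pairs $\lambda,\lambda^{-1}$, and for the bilinear form $\omega(x,y)=\transp x J y$ one has $\omega(Hx,Hy)=\omega(x,y)$, so the eigenspaces $E_\lambda$ and $E_\mu$ are $\omega$-orthogonal unless $\lambda\mu=1$ and, in particular, $\omega$ vanishes on $E_\lambda$ whenever $\lambda^2\neq 1$. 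Distinct (real) eigenvalues also give Hermitian-orthogonal eigenspaces. The diagonal factor $D$ I am aiming for is symplectic precisely when its entries pair as $(a,a^{-1})$ in positions $i$ and $n+i$, which matches the eigenvalue pairing; the real obstacle is to choose the eigenbasis to be at once Hermitian-orthonormal and a Darboux (symplectic) basis.

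The device that reconciles the two structures is the antilinear map $C\colon\CC^{2n}\to\CC^{2n}$, $C(x)=J\conjug x$. A direct computation gives $C^2=-\mathrm{Id}$ and, using $\conjug J=J$, the identity $\langle Cx,Cy\rangle=\conjug{\langle x,y\rangle}$, so $C$ is an antiunitary quaternionic structure; moreover $\omega(x,Cy)=\transp x J^2\conjug y=-\conjug{\langle x,y\rangle}$. Writing the symplectic relation as $H=J^{-1}\conjug H^{-1}J$ and using $J^{-1}=-J$, one checks that $H(Cx)=\lambda^{-1}Cx$ whenever $Hx=\lambda x$, so $C$ interchanges $E_\lambda$ and $E_{\lambda^{-1}}$ and preserves $E_{\pm 1}$. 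Thus $C$ provides, for each reciprocal pair, an identification of $E_\lambda$ with $E_{\lambda^{-1}}$ that is at the same time isometric for the Hermitian product and dual for $\omega$.

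With $C$ in hand the basis is built block by block. For a pair $\lambda\neq\pm 1$ I would pick any Hermitian-orthonormal basis $e_1,\dots,e_k$ of $E_\lambda$; then $Ce_1,\dots,Ce_k$ is a Hermitian-orthonormal basis of $E_{\lambda^{-1}}$, the identities above give $\omega(e_i,e_j)=\omega(Ce_i,Ce_j)=0$ and $\omega(e_i,Ce_j)=-\delta_{ij}$, so $(e_i,Ce_j)$ is an orthonormal symplectic system on which $H$ acts by $\lambda$ and $\lambda^{-1}$. For $\lambda=\pm 1$, the map $C$ restricts to a quaternionic structure on $E_{\pm 1}$ (forcing $\dim E_{\pm1}$ to be even) and $\omega$ restricts to a nondegenerate skew form there; starting from a unit vector $e$ one has $\langle e,Ce\rangle=0$ and $\omega(e,Ce)=-1$, and passing to the common orthogonal complement of $\{e,Ce\}$ (stable under $C$, $H$ and $\omega$) an induction yields a Hermitian-orthonormal Darboux basis along which $H=\pm\mathrm{Id}$. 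Collecting all these block bases and ordering them so that one member of each pair fills the first $n$ slots and its partner the matching slot among the last $n$ produces a matrix $V$ that is unitary (orthonormal columns) and symplectic ($\transp V J V=J$), with $V^*HV=D$ diagonal and, by the reciprocal pairing of its entries, symplectic. I expect the only delicate points to be the $\lambda=\pm 1$ eigenspaces, where the Hermitian orthonormalization and the Darboux normalization must be carried out together, and the combinatorial bookkeeping of the global symplectic ordering; both are handled cleanly by $C$.
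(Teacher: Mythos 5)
Your argument is correct, and it is worth noting that the paper itself offers no proof of this statement: it is quoted verbatim from Fassbender--Ikramov, and the only proof actually written out in the paper is that of the companion result, \Cref{prop:hermitian+antisymplectic}, where the hypothesis is $\transp{H}JH=-J$. Your construction is essentially the same device used there --- pair each unit eigenvector $v$ of $H$ for the eigenvalue $\lambda$ with $J\conjug{v}$, which is an eigenvector for the reciprocal (there, negated reciprocal) eigenvalue, and assemble $V$ from these pairs --- but packaged more structurally through the antiunitary map $C(x)=J\conjug{x}$ with $C^2=-\mathrm{Id}$ and the identities $\langle Cx,Cy\rangle=\conjug{\langle x,y\rangle}$, $\omega(x,Cy)=-\conjug{\langle x,y\rangle}$. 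The genuinely new content in your write-up, and the reason the paper's proof of \Cref{prop:hermitian+antisymplectic} cannot simply be copied here, is the treatment of the eigenvalues $\lambda=\pm1$: in the antisymplectic case the pairing $\lambda\mapsto-\lambda^{-1}$ has no real fixed points, so that issue never arises, whereas in the symplectic case $E_{\pm1}$ is sent to itself by $C$ and one must run the Darboux induction inside it. Your observation that the last displayed identity forces the Hermitian and $\omega$-orthogonal complements of $\mathrm{span}(e,Ce)$ to coincide is exactly what makes that induction go through, so the argument is complete up to the sign bookkeeping you flag (one should take $-Ce_j$ as the partner columns to get $\transp{V}JV=J$ rather than $-J$, as in the paper's proof of the companion proposition). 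In short: a correct, self-contained proof of a statement the paper only cites.
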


\begin{cor}\label{cor:hermitian+symplectic}
	Let $H \in \mathcal{M}_{2n}(\CC)$ be Hermitian and symplectic at the same time. Then, there exist $p,q \in \NN$ and $S \in \mathrm{Sp}(2n,\CC)$ such that $\transp{\conjug{S}}HS = K_{p,q}$.
\end{cor}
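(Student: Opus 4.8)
The plan is to feed the symplectic eigenvalue decomposition of \Cref{prop:hermitian+symplectic} into a short sequence of symplectic congruences that normalise the eigenvalues to $\pm 1$ and then reorder them. First I would apply \Cref{prop:hermitian+symplectic} to write $H = VDV^*$ with $V \in \mathrm{Sp}(2n,\CC)$ unitary and $D$ diagonal and symplectic. Since $V$ is unitary, $V^* H V = (V^*V)D(V^*V) = D$, so I am reduced to normalising $D$ by a symplectic congruence: I look for $S = VT$ with $T$ symplectic, so that $\transp{\conjug{S}}HS = \transp{\conjug{T}}D T$, and the matrix $V$ has done its job.

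Next I would analyse $D$. Because $H$ is Hermitian and $V$ is unitary, $D = V^*HV$ is Hermitian, and being diagonal its entries $d_1,\dots,d_{2n}$ are real; they are nonzero since $H$, being symplectic, is invertible. Writing out the symplectic relation $\transp{D}JD = J$ for a diagonal matrix forces $d_i d_{n+i} = 1$ for $1 \le i \le n$, hence $d_{n+i} = 1/d_i$ and, crucially, $\mathrm{sign}(d_{n+i}) = \mathrm{sign}(d_i)$ because $d_i d_{n+i} = 1 > 0$. Thus the eigenvalues occur in pairs of equal sign sitting at positions $i$ and $n+i$, which is exactly the placement pattern of the diagonal entries of $K_{p,q} = \mathrm{diag}(I_{p,q}, I_{p,q})$.

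I would then take the real diagonal matrix $T$ with entries $t_i = \abs{d_i}^{-1/2}$ for $1 \le i \le n$ and $t_{n+i} = t_i^{-1} = \abs{d_i}^{1/2}$. The relation $t_i t_{n+i} = 1$ makes $T$ diagonal symplectic, and since $T$ is real and diagonal, $\transp{\conjug{T}} = T$, so a direct computation gives $\transp{\conjug{T}}DT = \mathrm{diag}(\epsilon_1,\dots,\epsilon_n,\epsilon_1,\dots,\epsilon_n)$ with $\epsilon_i = \mathrm{sign}(d_i) \in \{\pm 1\}$ (the paired positions give the same sign by the remark above). Finally I would reorder the $\epsilon_i$ so that the $+1$'s precede the $-1$'s, using a block-diagonal permutation $\Pi = \mathrm{diag}(P_\pi, P_\pi)$ acting by the \emph{same} permutation $\pi$ on both halves of $\{1,\dots,2n\}$; such a $\Pi$ is symplectic because $\transp{P_\pi}P_\pi = I$, and the congruence $\transp{\conjug{\Pi}}(\,\cdot\,)\Pi$ merely reorders the diagonal while preserving the paired structure. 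Setting $S = VT\Pi$ then yields $\transp{\conjug{S}}HS = K_{p,q}$ with $p = \#\{i : \epsilon_i = 1\}$ and $q = n-p$, and $S \in \mathrm{Sp}(2n,\CC)$ as a product of symplectic matrices.

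The only genuinely delicate point is bookkeeping the symplectic constraint throughout: each normalising factor must respect the pairing $i \leftrightarrow n+i$ imposed by $J$, which is why $T$ scales positions $i$ and $n+i$ reciprocally and why the reordering permutation must act identically on both halves. The sign compatibility $\mathrm{sign}(d_i) = \mathrm{sign}(d_{n+i})$, forced by $d_i d_{n+i} = 1$, is precisely what makes the target $K_{p,q}$ — rather than some more general signature pattern — achievable, so I would record it explicitly as the step that matches $D$ to the block structure of $K_{p,q}$.
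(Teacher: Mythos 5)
Your argument is correct and follows essentially the same route as the paper: both feed the symplectic eigenvalue decomposition of \Cref{prop:hermitian+symplectic} into a normalisation of the resulting diagonal symplectic Hermitian matrix $D=\mathrm{diag}(\Lambda,\Lambda^{-1})$ by a further symplectic congruence of the form $\mathrm{diag}(A,\transp{A}^{-1})$. The paper compresses your explicit rescaling $T$ and permutation $\Pi$ into a single application of Sylvester's law of inertia to the real symmetric matrix $\Lambda$, but since $\Lambda$ is already diagonal this is the same computation.
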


\begin{proof}
	By \Cref{prop:hermitian+symplectic}, we can suppose that $H$ is symplectic and diagonal, hence of the form 
	$\begin{pmatrix}
	\Lambda & 0 \\
	0 & \Lambda^{-1}
	\end{pmatrix}$
	where $\Lambda	 \in \mathrm{GL}(n,\RR)$ is a diagonal matrix. Since $\Lambda$ is a real symmetric matrix, there exists $A \in \mathrm{GL}(n,\RR)$ and $p,q \in \NN$ such that $\transp{A}\Lambda A = I_{p,q}$. Then, the matrix $S = \begin{pmatrix}
	A & 0 \\
	0 & \transp{A}^{-1}
	\end{pmatrix}$ has real coefficients, is symplectic, and satisfies $\transp{\conjug{S}}HS = K_{p,q}$.
\end{proof}

By adapting the proof of Fassbender and Ikramov, we have the following proposition:

\begin{prop}\label{prop:hermitian+antisymplectic}
	Let $H \in \mathcal{M}_{2n}(\CC)$ be a Hermitian matrix such that $\transp{H}JH = -J$. Then, there exists a unitary matrix $V \in \mathrm{Sp}(2n,\CC)$ and a diagonal matrix $\Lambda \in \mathrm{GL}(n,\RR)$ with positive entries such that
	\[
	VHV^* = 
	\begin{pmatrix}
	\Lambda & 0 \\
	0 & - \Lambda^{-1}
	\end{pmatrix}
	\]
\end{prop}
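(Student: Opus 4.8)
The plan is to diagonalize $H$ using an auxiliary anti-linear structure compatible with both the Hermitian inner product and the symplectic form; this is the natural adaptation of the Fassbender--Ikramov argument to the anti-symplectic condition $\transp H J H = -J$. First I would record two elementary consequences of the hypotheses. Taking determinants in $\transp H J H = -J$ gives $\det(H)^2 = 1$, so $H$ is invertible and $0$ is not an eigenvalue. Writing $\omega(x,y) = \transp x J y$ for the symplectic form, the relation says $\omega(Hx, Hy) = -\omega(x,y)$; hence if $Hv = \lambda v$ and $Hw = \mu w$ with $\omega(v,w)\neq 0$, then $\lambda\mu = -1$. Since $H$ is Hermitian its eigenvalues are real, and $\lambda = -1/\lambda$ is impossible over $\RR$; together with the non-degeneracy of $\omega$ this forces the eigenvalues to split into pairs $\{\lambda, -1/\lambda\}$, each consisting of one positive and one negative number, so $H$ has signature $(n,n)$, matching the target form $\mathrm{diag}(\Lambda, -\Lambda^{-1})$.

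The core step is to introduce the map $\tau(v) = J^{-1}\conjug v = -J\conjug v$. Using that $H$ is Hermitian (so $\conjug H = \transp H$) together with $\transp H J H = -J$, one checks that $JHJ^{-1} = -\conjug H^{-1}$, from which $\tau$ sends the $\lambda$-eigenspace $E_\lambda$ to the $(-1/\lambda)$-eigenspace $E_{-1/\lambda}$. A short computation then gives the three identities $\tau^2 = -\mathrm{Id}$, $\langle \tau v, \tau w\rangle = \langle w, v\rangle$ (so $\tau$ is anti-unitary), and $\omega(v, \tau w) = \conjug{\langle v, w\rangle}$. Thus $\tau$ simultaneously realizes the eigenvalue pairing, preserves orthonormality, and converts the Hermitian form into the symplectic pairing between $E_\lambda$ and $E_{-1/\lambda}$.

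Finally I would build the basis. Group the eigenspaces as $\CC^{2n} = \bigoplus_{\lambda>0}(E_\lambda \oplus E_{-1/\lambda})$; distinct eigenspaces are Hermitian-orthogonal, and $\omega$ vanishes on each $E_\lambda$ separately. In each block choose a Hermitian-orthonormal basis $e_1,\dots,e_d$ of $E_\lambda$ and set $f_i = \tau(e_i)$; by the properties of $\tau$ the $f_i$ form an orthonormal basis of $E_{-1/\lambda}$ with $\omega(e_i,f_j)=\delta_{ij}$ and $\omega(e_i,e_j)=\omega(f_i,f_j)=0$. Collecting all the $e_i$ and then all the $f_i$ yields a basis that is at once orthonormal and symplectic for $J$. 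Letting $V$ be the matrix whose inverse $V^{-1}=V^*$ has these vectors as columns makes $V$ unitary, and, since the conjugate and the transpose of a symplectic matrix are again symplectic, also symplectic. Then $VHV^* = VHV^{-1}$ is diagonal with the positive eigenvalues $\lambda_i$ first and their partners $-1/\lambda_i$ afterwards, that is, it equals $\mathrm{diag}(\Lambda, -\Lambda^{-1})$ with $\Lambda = \mathrm{diag}(\lambda_1,\dots,\lambda_n)$ having positive entries.

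The main obstacle --- and the only genuinely delicate point --- is the simultaneous compatibility: an arbitrary orthonormal eigenbasis will not be symplectically normalized, and an arbitrary symplectic basis need not be orthonormal. The anti-unitary structure $\tau$ is precisely what reconciles the two, and verifying its three identities ($\tau^2=-\mathrm{Id}$, anti-unitarity, and the intertwining of $\omega$ with $\langle\cdot,\cdot\rangle$) is where the real content lies; everything else is bookkeeping.
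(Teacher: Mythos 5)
Your proof is correct and follows essentially the same route as the paper: the anti-unitary map $\tau(v)=-J\conjug{v}$ you introduce is exactly the pairing $v\mapsto -J\conjug{v}$ the paper uses to send $\lambda$-eigenvectors to $(-1/\lambda)$-eigenvectors, and the resulting simultaneously orthonormal and symplectic eigenbasis is the same one. The only difference is presentational — you derive unitarity and symplecticity of $V$ from the three identities for $\tau$ (and give a cleaner non-degeneracy argument for the signature $(n,n)$), whereas the paper verifies $\transp{V}JV=J$ by a direct block computation.
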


\begin{proof}
	Let $\lambda$ be an eigenvalue of $H$ and $v$ be an associated unit eigenvector.
	Since we have $\transp{H}JH = -J$, it follows that $Jv = -\transp{H}JHv = -\lambda \transp{H}Jv$		or $\transp{H}(Jv)  = -\frac{1}{\lambda}(Jv)$. Considering the complex conjugation and using the fact that $H$ is Hermitian, we obtain
	\[H(J\conjug{v}) = -\frac{1}{\lambda}(J\conjug{v})\]
	
	Thus, $J\conjug{v}$ is a unit eigenvector of $H$ associated with the eigenvalue $-\frac{1}{\lambda}$. Since $H$ has real eigenvalues, $\lambda \neq -\frac{1}{\lambda}$, and the corresponding eigenvectors $v$ and $J\conjug{v}$ must be orthogonal.
	
	Let $\lambda_1 \geq \lambda_2 \geq \dots \geq \lambda_r > 0$ be the positive eigenvalues of $H$ and let $v_1,\dots,v_r$ be a corresponding orthonormal set of eigenvectors. 
	Then, $-\frac{1}{\lambda_1},-\frac{1}{\lambda_2}, \dots , -\frac{1}{\lambda_r}$ are the negative eigenvalues of $H$, and $J\conjug{v_1} , J\conjug{v_2},\dots , J\conjug{v_r} $ are the corresponding orthonormal eigenvectors. In particular, $r=n$ and $H$ has signature $(n,n)$.	Furthermore, the two sets of eigenvectors are orthogonal to each other since they correspond to non-overlapping sets of eigenvalues of the Hermitian matrix $H$.
	
	We construct the matrix $V$ by columns as:
	$V= (v_1, \cdots, v_n , -J\conjug{v_1}, \cdots , -J\conjug{v_n})$. It is clear that the matrix $V$ is unitary, and since the columns of $V$ are eigenvectors of $H$, we have
	\[
	VHV^* =
	\begin{pmatrix}
	\Lambda & 0 \\
	0 & -\Lambda^{-1}
	\end{pmatrix}
	\text{ with } \Lambda = 
	\begin{pmatrix}
	\lambda_1 & & \\
	& \ddots & \\
	& & \lambda_n
	\end{pmatrix}.
	\]
	
	It remains to verify that $V$ is symplectic. Let $W$ be the matrix by columns $W = (v_1, \cdots, v_n)$, so $V = (W,-J \conjug{W})$.
	
	We have:
	\[
	\transp{V}JV = 
	\begin{pmatrix}
	\transp{W} \\
	\transp{\conjug{W}J}
	\end{pmatrix}
	(JW, \conjug{W})
	=
	\begin{pmatrix}
	\transp{W}JW & \transp{W}\conjug{W}\\
	-\transp{\conjug{W}}W & \transp{\conjug{W}}J \conjug{W}
	\end{pmatrix}
	\]
	Since the columns of $W$ are orthonormal, we have $\transp{\conjug{W}}W = I_n$. Since the columns of $J\conjug{W}$ are orthogonal to the columns of $W$, we obtain $\transp{\conjug{W}}J \conjug{W} = 0$.
	By considering the complex conjugation we get $\transp{W}\conjug{W} = I_n$ and $\transp{W}J W = 0$, so $\transp{V}J V = J$ and $V$ is symplectic.
	
\end{proof}

\begin{cor}\label{cor:hermitian+antisymplectic}
	Let $H \in \mathcal{M}_{2n}(\CC)$ be a Hermitian matrix such that $\transp{H}JH = -J$. Then, there exist $p,q \in \NN$ and $S \in \mathrm{Sp}(2n,\CC)$ such that $\transp{\conjug{S}}HS = I_{n,n}$.
\end{cor}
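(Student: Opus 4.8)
The plan is to mirror the proof of \Cref{cor:hermitian+symplectic}, replacing the symplectic eigenvalue decomposition of \Cref{prop:hermitian+symplectic} by the normal form of \Cref{prop:hermitian+antisymplectic}. First I would observe that the matrix $V$ provided by \Cref{prop:hermitian+antisymplectic} is both unitary and symplectic, so $S_0 = V^* = V^{-1}$ is again symplectic, and that $\transp{\conjug{S_0}} H S_0 = V H V^*$. Because $V$ is unitary, this $*$-congruence coincides with the similarity $V H V^{-1}$, so \Cref{prop:hermitian+antisymplectic} lets me assume, after replacing $H$ by $\transp{\conjug{S_0}} H S_0$, that $H = \begin{pmatrix} \Lambda & 0 \\ 0 & -\Lambda^{-1} \end{pmatrix}$ with $\Lambda \in \mathrm{GL}(n,\RR)$ diagonal with positive entries.

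Next I would exhibit the conjugation explicitly. Since $\Lambda$ is positive, $\Lambda^{1/2}$ makes sense and is real, so I set $S_1 = \begin{pmatrix} \Lambda^{-1/2} & 0 \\ 0 & \Lambda^{1/2} \end{pmatrix}$. This matrix is real and of the block-diagonal shape $\begin{pmatrix} A & 0 \\ 0 & \transp{A}^{-1} \end{pmatrix}$ with $A = \Lambda^{-1/2}$ symmetric, hence symplectic exactly as in \Cref{cor:hermitian+symplectic}. A direct block computation gives $\transp{\conjug{S_1}} H S_1 = \begin{pmatrix} I_n & 0 \\ 0 & -I_n \end{pmatrix} = I_{n,n}$. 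Setting $S = S_0 S_1$, a product of symplectic matrices and therefore symplectic, I obtain $\transp{\conjug{S}} H S = I_{n,n}$; the signature being forced to $(n,n)$, as already recorded in \Cref{prop:hermitian+antisymplectic}, here $p = q = n$.

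I expect no substantial obstacle. The only two points deserving care are that conjugation by the unitary symplectic $V$ legitimately normalizes $H$ as a Hermitian $*$-congruence, which relies on $V^* = V^{-1}$, and that the diagonal rescaling $S_1$ remains symplectic, which is immediate because $\Lambda^{-1/2}$ and $\Lambda^{1/2}$ are mutually inverse transposes. The argument is thus a short two-step normalization, structurally identical to the symplectic-eigenvalue reduction of the previous corollary.
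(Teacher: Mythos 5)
Your proof is correct and follows essentially the same route as the paper: reduce to the block-diagonal form $\begin{pmatrix}\Lambda & 0\\ 0 & -\Lambda^{-1}\end{pmatrix}$ via \Cref{prop:hermitian+antisymplectic} and then apply a real block-diagonal symplectic congruence; your explicit choice $A=\Lambda^{-1/2}$ is just a concrete instance of the paper's $\transp{A}\Lambda A = I_n$. The extra care you take in checking that $S_0=V^*$ is symplectic and that the $*$-congruence agrees with the similarity is sound and only makes explicit what the paper leaves implicit.
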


\begin{proof}
	By \Cref{prop:hermitian+antisymplectic}, we can suppose that $H$ is diagonal of the form 
	$\begin{pmatrix}
	\Lambda & 0 \\
	0 & -\Lambda^{-1}
	\end{pmatrix}$
	where $\Lambda \in \mathrm{GL}(n,\RR)$ is a diagonal matrix with positive entries. Since $\Lambda$ is a positive definite real symmetric matrix, there exists $A \in \mathrm{GL}(n,\RR)$ such that $\transp{A}\Lambda A = I_n$. Then, the matrix $S = \begin{pmatrix}
	A & 0 \\
	0 & \transp{A}^{-1}
	\end{pmatrix}$ is real, symplectic, and satisfies $\transp{\conjug{S}}HS = I_{n,n}$.
\end{proof}

We now have all the tools to complete the proof of \Cref{thm:main_thm} in the symplectic case. The result is given the following proposition.

\begin{prop}
	Let $\rho \in \mathrm{Hom}(\Gamma, \mathrm{Sp}(2n, \CC))$ be an irreducible representation. Suppose that $\Phi(\chi_\rho) = \chi_\rho$. Then $\rho$ is conjugated in $\mathrm{Sp}(2n,\CC)$ to a representation with values in $\mathrm{Sp}(2n,\RR)$ or there exist $p,q \in \NN$ with $p+q = n$ such that $\rho$ is conjugated in $\mathrm{Sp}(2n,\CC)$ to a representation with values in $\mathrm{Sp}(2p,2q)$.
\end{prop}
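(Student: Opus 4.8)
The plan is to exploit that for symplectic groups $\Phi_1=\Phi_2=\Phi$, and to run the argument through the involution $A\mapsto \transp{\conjug{A}}^{-1}$, so that the conjugating matrix produces an invariant Hermitian form to which Corollaries \ref{cor:hermitian+symplectic} and \ref{cor:hermitian+antisymplectic} apply. First I would observe that $\Phi(\chi_\rho)=\chi_\rho$ means $\rho$ and the representation $\gamma\mapsto \transp{\conjug{\rho(\gamma)}}^{-1}$ have the same character; both are irreducible and take values in $\mathrm{Sp}(2n,\CC)$, so they are conjugate in $\mathrm{SL}(2n,\CC)$ and hence, by the preceding lemma, conjugate in $\mathrm{Sp}(2n,\CC)$. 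Thus there is $P\in\mathrm{Sp}(2n,\CC)$ with $\rho(\gamma)=P\,\transp{\conjug{\rho(\gamma)}}^{-1}P^{-1}$ for all $\gamma$. Setting $H=P^{-1}$ and rearranging yields $\rho(\gamma)^* H\rho(\gamma)=H$ for all $\gamma$; that is, $\rho$ preserves the (a priori non-Hermitian) nondegenerate sesquilinear form $H$.

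The next step is to arrange that this invariant form is Hermitian. Taking adjoints in $\rho(\gamma)^* H\rho(\gamma)=H$ shows $\rho$ also preserves $H^*$, so $H^{-1}H^*$ commutes with the image of $\rho$; since $\rho$ is irreducible, Schur's lemma gives $H^*=cH$ with $c\in\CC^*$, and applying $*$ twice forces $\abs{c}=1$. Choosing $a$ with $a^2=c$ and replacing $H$ by the Hermitian matrix $H_0=aH$ (still nondegenerate, and still preserved by $\rho$), one computes, using that $H=P^{-1}$ is symplectic, that $\transp{H_0}JH_0=cJ$. The crux of the argument is to show $c=\pm1$: combining $\transp{H_0}JH_0=cJ$ with the Hermitian relation $H_0=H_0^*$ and the identity $J^2=-I$ gives $c=\conjug{c}$, whence $c=\pm1$. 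This sign is the invariant that distinguishes the two real forms.

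It then remains to feed $H_0$ into the two corollaries and track the conjugations inside $\mathrm{Sp}(2n,\CC)$. If $c=1$, then $H_0$ is Hermitian and symplectic, so by Corollary \ref{cor:hermitian+symplectic} there are $p,q$ and $S\in\mathrm{Sp}(2n,\CC)$ with $S^* H_0 S=K_{p,q}$; conjugating by $S$, the representation $S^{-1}\rho S$ is symplectic and preserves the Hermitian form $K_{p,q}$, which is exactly the defining condition of $\mathrm{Sp}(2p,2q)$. If $c=-1$, then $\transp{H_0}JH_0=-J$, so Corollary \ref{cor:hermitian+antisymplectic} provides $S\in\mathrm{Sp}(2n,\CC)$ with $S^* H_0 S=I_{n,n}$, and $S^{-1}\rho S$ is then symplectic and unitary for $I_{n,n}$. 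To identify this last group with $\mathrm{Sp}(2n,\RR)$ I would recall that a real symplectic matrix preserves the Hermitian form $iJ$ of signature $(n,n)$, observe that $iJ$ is itself Hermitian with $\transp{(iJ)}J(iJ)=-J$, and apply Corollary \ref{cor:hermitian+antisymplectic} once more to obtain $S_0\in\mathrm{Sp}(2n,\CC)$ with $S_0^*(iJ)S_0=I_{n,n}$; conjugating further by $S_0$ sends the $I_{n,n}$-unitary symplectic representation to one preserving both $J$ and $iJ$, i.e.\ a real one, with values in $\mathrm{Sp}(2n,\RR)$. Since every conjugating matrix was chosen in $\mathrm{Sp}(2n,\CC)$, the resulting conjugacy is internal to $\mathrm{Sp}(2n,\CC)$, as required.

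I expect the main obstacle to be the two places where irreducibility and the symplectic constraint interact: first the Schur-lemma normalisation producing a genuinely Hermitian invariant form, and above all the computation pinning down $c=\pm1$, which is what separates the split case $\mathrm{Sp}(2n,\RR)$ from the cases $\mathrm{Sp}(2p,2q)$. The remaining bookkeeping — checking that conjugating a symplectic representation by a symplectic $S$ transports the invariant Hermitian form to $S^* H_0 S$ while preserving symplecticity — is routine.
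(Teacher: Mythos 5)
Your proof is correct and follows the same overall strategy as the paper's: produce a $\rho$-invariant nondegenerate Hermitian form $H_0$, show that $\transp{H_0}JH_0=\pm J$ after normalisation, and then feed the two signs into \Cref{cor:hermitian+symplectic} and \Cref{cor:hermitian+antisymplectic} respectively, finishing the $\mathrm{Sp}(2n,\RR)$ case with the same $iJ$ trick (a symplectic matrix preserving the Hermitian form $iJ$ is real). The one genuine difference is in how the Hermitian form and the relation $\transp{H_0}JH_0=cJ$ are obtained. The paper imports \Cref{thm:char_var_real_slnc} to get an invariant Hermitian $H$ directly, then applies \Cref{lemma:bilinear_forms_preserved} to the conjugate representation $H\rho H^{-1}$ to get $\transp{H}^{-1}JH^{-1}=\lambda J$, and proves $\lambda\in\RR$ by a trace computation. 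You instead stay inside the symplectic group: the preceding lemma gives a conjugating matrix $P\in\mathrm{Sp}(2n,\CC)$ between $\rho$ and $\gamma\mapsto\transp{\conjug{\rho(\gamma)}}^{-1}$, so $H=P^{-1}$ is an invariant sesquilinear form that is \emph{already symplectic}; Schur's lemma gives $H^*=cH$ with $\abs{c}=1$, the rescaling $H_0=aH$ ($a^2=c$, $\abs{a}=1$) makes it Hermitian, and then $\transp{H_0}JH_0=cJ$ is automatic, with $c=\conjug{c}$ (hence $c=\pm1$) following from Hermitianity and $J^2=-I$ as you claim. This is slightly more self-contained — it uses only the irreducibility of $\rho$ via Schur and the $\mathrm{Sp}$-conjugacy lemma, rather than the full strength of the $\mathrm{SL}(n,\CC)$ theorem — at the cost of the extra normalisation step $H\mapsto aH$; both routes land on the same dichotomy and the remainder is identical.
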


\begin{proof}
	The character $\chi_\rho$ of the representation $\rho$ is a fixed point of the involution $\Phi$. Hence, $\chi_\rho$ is a fixed point of the involution $\Phi_2$ of $\mathcal{X}_{\mathrm{SL}(2n , \CC)}(\Gamma)$. By \Cref{thm:char_var_real_slnc}, the image of $\rho$ preserves a Hermitian form $H$. Hence, for all $\gamma \in \Gamma$, we have:
	\begin{align*}
		\transp{\rho(\gamma)} J \rho(\gamma) = J \\
		\transp{\conjug{\rho(\gamma)}} H \rho(\gamma) = H 
	\end{align*}
	Consider now the representation $\sigma \in \mathrm{Hom}(\Gamma, \mathrm{SL}(2n , \CC))$ given by
	\begin{equation*}
		\sigma(\gamma) = H\rho(\gamma)H^{-1} = \transp{\conjug{\rho{(\gamma)}}}^{-1}.
	\end{equation*}
	
	On the one hand, since the involution $A \mapsto \transp{\conjug{A}}^{-1}$ stabilizes $\mathrm{Sp}(2n,\CC)$, representation $\sigma$ takes values in $\mathrm{Sp}(2n,\CC)$. On the other hand, since $\sigma(\gamma) = H\rho(\gamma)H^{-1}$, the image of $\sigma$ preserves the bilinear form $\transp{H}^{-1} J H^{-1}$. By \Cref{lemma:bilinear_forms_preserved}, there exists $\lambda \in \CC$ such that $\transp{H}^{-1} J H^{-1} = \lambda J$. We can rewrite it as $\transp{H} J H J= -\lambda^{-1} I_{2n}$. Using the fact that $H$ is Hermitian, we deduce:
	\begin{align*}
		\transp{H} J H J= -\lambda^{-1} I_{2n} \\
		\transp{\conjug{H}} J \conjug{H} J= -\conjug{\lambda}^{-1} I_{2n} \\
		HJ \transp{H} J = -\conjug{\lambda}^{-1} I_{2n}
	\end{align*}
	
	Since $\mathrm{tr}(\transp{H} J H J) = \mathrm{tr}( H J \transp{H} J)$, we obtain that $-2n \lambda^{-1} = -2n \conjug{\lambda}^{-1} $, hence $\lambda \in \RR$. Perhaps after multiplying $H$ by a real scalar, we can suppose that $\lambda = \pm 1$. It remains to consider the two cases $\lambda = 1$ and $\lambda = -1$.

	\paragraph{First case: $\lambda = 1$.}
	We know that $\transp{H} J H =  J$. By the result of Fassbender and Ikramov in \Cref{prop:hermitian+symplectic} and \Cref{cor:hermitian+symplectic}, we know that there exist a symplectic  matrix $S$ and $p,q \in \NN$ such that $H = SK_{p,q}S^*$. Consider now the representation $\tau$ given by $ \tau(\gamma) = \transp{\conjug{S}}^{-1} \rho(\gamma)\transp{\conjug{S}}$. Since $S \in \mathrm{Sp}(2n,\CC)$, $\tau$ takes values in $\mathrm{Sp}(2n,\CC)$, and since it is conjugated to $\rho$, its image preserves the Hermitian form $SHS^* = K_{p,q}$. Therefore, $\tau \in \mathrm{Hom}(\Gamma,\mathrm{Sp}(2p,2q))$ and is conjugated to $\rho$ in $\mathrm{Sp}(2n,\CC)$.
	
	\paragraph{Second case: $\lambda = -1$.}
	We know then that $\transp{H} J H =  -J$. By \Cref{prop:hermitian+antisymplectic} and \Cref{cor:hermitian+antisymplectic}, there exists a symplectic matrix $S$ such that $\transp{\conjug{S}}HS = I_{n,n}$. In the same way, since $iJ$ is symplectic and  $\transp{(iJ)} J (iJ) = -J$, there exists a symplectic matrix $S_1$ such that $\transp{\conjug{S_1}}(iJ)S_1 = I_{n,n}$.
	
	Let $\tau$ be the representation defined by $\tau(\gamma) = (S_1S^{-1}) \sigma(\gamma) (S_1S^{-1})^{-1} $
	The representation $\tau$ is conjugated to $\rho$ in $\mathrm{Sp}(2n,\CC)$. Furthermore, since $\rho$ preserves the Hermitian form $H$, $\tau$ preserves the following Hermitian form:
	\begin{align*}
		\transp{\conjug{(S_1S^{-1})}}^{-1} H (S_1S^{-1})^{-1} 
		&= \transp{\conjug{S_1}}^{-1} (\transp{\conjug{S}} H S ) S_1^{-1}\\
		&= \transp{\conjug{S_1}}^{-1}I_{n,n}S_1^{-1} \\
		&= iJ
	\end{align*}
	
	Therefore, for $\gamma \in \Gamma$, we have:
	
	\begin{align*}
		\transp{\tau(\gamma)} J \tau(\gamma) &= J \\
		\transp{\conjug{\tau(\gamma)}} (iJ) \tau(\gamma) &= iJ
	\end{align*}
	
	Hence, $\tau(\gamma) = \conjug{\tau(\gamma)}$, i.e. $\tau$ takes values in $\mathrm{Sp}(2n,\RR)$.
	
\end{proof}

\begin{rem}
	In the proof, we used the results of conjugation in $\mathrm{SL}(2n,\CC)$ for the involution $\Phi_2$ of $\mathcal{X}_{\mathrm{SL}(2n,\CC)}(\Gamma)$. Since the involutions $\Phi_1$ and $\Phi_2$ are equal when restricted to $\mathrm{Sp}(2n,\CC)$, we can also consider the results for the involution $\Phi_1$. We know, by \Cref{thm:char_var_real_slnc}, that an irreducible representation with character fixed by $\Phi_1$ is conjugated in $\mathrm{SL}(2n,\CC)$ to a representation with values in $\mathrm{SL}(2n,\RR)$ or to a representation with values in $\mathrm{SL}(n,\HH)$. In the first case, we recover representations with values in $\mathrm{Sp}(2n,\RR)$, and in the second case we recover all the other real forms, namely $\mathrm{Sp}(2p,2q)$.
	
%
	
\end{rem}

\subsection{Real irreducible orthogonal characters}\label{subsect:orthogonal}

In this subsection, we complete the proof of \Cref{thm:main_thm} by considering the orthogonal and special orthogonal groups. We need to deal separately with odd and even orthogonal groups, since the real forms and the invariant rings of functions are essentially different in both cases. 
Recall that, in this article, we say that a representation $\rho \in \mathrm{Hom}(\Gamma, \mathrm{O}(n,\CC) )$ or $\mathrm{Hom}(\Gamma, \mathrm{SO}(n,\CC) )$ is irreducible if it is irreducible as a representation with values in $\mathrm{GL}(n,\CC)$. We begin by stating a preliminary lemma.
\begin{lemme}\label{lemma:conj_gl_conj_o}
	Let $\rho_1, \rho_2 \in \mathrm{Hom}(\Gamma, \mathrm{O}(n,\CC))$ be two irreducible representations. If they are conjugated in $\mathrm{GL}(n,\CC)$, then they are conjugated in $\mathrm{O}(n,\CC)$.
\end{lemme}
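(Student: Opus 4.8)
The plan is to mirror the proof of the analogous symplectic lemma almost verbatim, replacing the skew-symmetric form $J$ by the standard symmetric form $I = I_n$ preserved by $\mathrm{O}(n,\CC)$. Let $M \in \mathrm{GL}(n,\CC)$ be a matrix realizing the conjugacy in $\mathrm{GL}(n,\CC)$, so that $M\rho_1(\gamma)M^{-1} = \rho_2(\gamma)$ for all $\gamma \in \Gamma$. The first step is to transport the defining relation of $\rho_2$ through $M$. Since $\rho_2$ takes values in $\mathrm{O}(n,\CC)$, we have $\transp{\rho_2(\gamma)}\rho_2(\gamma) = I$; substituting $\rho_2(\gamma) = M\rho_1(\gamma)M^{-1}$ and rearranging yields
\begin{equation*}
\transp{\rho_1(\gamma)}\,(\transp{M}M)\,\rho_1(\gamma) = \transp{M}M
\end{equation*}
for every $\gamma$. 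Thus the image of $\rho_1$ preserves the bilinear form with matrix $\transp{M}M$, in addition to the form with matrix $I$ coming from $\rho_1 \in \mathrm{Hom}(\Gamma,\mathrm{O}(n,\CC))$.

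The second step is to observe that $\transp{M}M$ is symmetric and invertible (as $M \in \mathrm{GL}(n,\CC)$), hence defines a non-degenerate bilinear form. Since $\rho_1$ is irreducible, \Cref{lemma:bilinear_forms_preserved} applies and gives a scalar $\lambda \in \CC^*$ with $\transp{M}M = \lambda I$. The final step is the rescaling trick used in the symplectic case: choosing a square root $\mu$ of $\lambda$ and setting $M_1 = \frac{1}{\mu}M$, one computes $\transp{M_1}M_1 = \frac{1}{\mu^2}\transp{M}M = I$, so $M_1 \in \mathrm{O}(n,\CC)$. Because $M_1$ differs from $M$ only by a central scalar, it still conjugates $\rho_1$ to $\rho_2$, which establishes the claim.

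I expect no genuine obstacle here, as the argument is a direct transcription of the symplectic lemma. The only points deserving a line of verification are that $\transp{M}M$ is non-degenerate (needed to invoke \Cref{lemma:bilinear_forms_preserved}) and that rescaling by the central scalar $\tfrac{1}{\mu}$ leaves the conjugation relation unchanged; both are immediate.
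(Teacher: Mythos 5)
Your proof is correct and follows essentially the same route as the paper: both transport the orthogonality relation through the conjugating matrix to produce a second invariant non-degenerate symmetric form, invoke \Cref{lemma:bilinear_forms_preserved} via irreducibility to identify it with a scalar multiple of $I_n$, and rescale by a square root of that scalar (the paper merely phrases this with $\rho_2$ preserving $I_n$ and $\transp{Q^{-1}}Q^{-1}$ rather than $\rho_1$ preserving $I_n$ and $\transp{M}M$, an immaterial difference). No gap.
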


\begin{proof}
	Let $Q \in \mathrm{GL}(n,\CC)$ such that $\rho_2(\gamma) = Q\rho_1(\gamma)Q^{-1}$. Thus, the representation $\rho_2$ preserves the non-degenerate bilinear forms given by the matrices $I_{n}$ and $\transp{Q^{-1}}Q^{-1}$. By \Cref{lemma:bilinear_forms_preserved}, since $\rho_2$ is irreducible, there exists $\lambda \in \CC^{*}$ such that $\transp{Q^{-1}}Q^{-1} = \lambda I_{n}$. Let $\mu \in \CC$ be a square root of $\lambda$ and $Q_1 = \mu^{-1} Q$. Then, $\transp{Q_1^{-1}}Q_1^{-1} =  I_{n}$ and $Q_1$ still conjugates $\rho_1$ to $\rho_2$.
\end{proof}

\subsubsection{Odd orthogonal groups}
We are now able to prove \Cref{thm:main_thm} for odd orthogonal groups. The result for $\mathrm{O}(2n+1,\CC)$ is given in the following proposition. Since $-I_{2n+1}$ is in the center of $\mathrm{O}(2n+1,\CC)$ and has determinant $-1$, we deduce immediately the same statement for $\mathrm{SO}(2n+1,\CC)$.

\begin{prop}\label{prop:lifts_real_points_odd_o}
	Let $\rho \in \mathrm{Hom}(\Gamma, \mathrm{O}(2n+1, \CC))$ be an irreducible representation. Suppose that $\chi_\rho \in \mathcal{X}_{\mathrm{O}(2n+1,\CC)}^{\Phi}(\Gamma)$. Then there exist $p,q \in \NN$ with $p+q = 2n+1$ such that $\rho$ is conjugated in $\mathrm{O}(2n+1,\CC)$ to a representation with values in the copy $\mathcal{O}_{p,q}$ of $\mathrm{O}(p,q)$.
\end{prop}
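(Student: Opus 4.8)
The plan is to run the same strategy as in the symplectic case: produce a Hermitian form preserved by $\rho$, combine it with the orthogonal form through \Cref{lemma:bilinear_forms_preserved}, and read off from the resulting relation a real structure on $\CC^{2n+1}$ that $\rho$ stabilizes. First I would observe that the embedding $\mathcal{X}_{\mathrm{O}(2n+1,\CC)}(\Gamma)\hookrightarrow\mathcal{X}_{\mathrm{GL}(2n+1,\CC)}(\Gamma)$ carries $\Phi$ to the involution $\Phi_2$, so that $\chi_\rho$, viewed in the $\mathrm{GL}$-character variety, is $\Phi_2$-fixed. By \Cref{thm:char_var_real_slnc} in its $\mathrm{GL}(n,\CC)$ version, the irreducible $\rho$ is then conjugate into some $\mathrm{U}(p',q')$, hence its image preserves a nondegenerate Hermitian form $H$, so that for all $\gamma$ we have both $\transp{\conjug{\rho(\gamma)}}H\rho(\gamma)=H$ and $\transp{\rho(\gamma)}\rho(\gamma)=I$. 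Setting $\sigma(\gamma)=H\rho(\gamma)H^{-1}$, the Hermitian identity together with orthogonality gives $\sigma(\gamma)=\conjug{\rho(\gamma)}$, so $\sigma$ is again orthogonal; being conjugate to $\rho$ it also preserves the bilinear form $\transp{(H^{-1})}H^{-1}$. Applying \Cref{lemma:bilinear_forms_preserved} to the irreducible $\sigma$ yields $\lambda\in\CC^\ast$ with $\transp{(H^{-1})}H^{-1}=\lambda I$, that is $H\conjug H=\lambda^{-1}I$ (using $\transp H=\conjug H$).

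The crux is the analysis of $\lambda$, and this is exactly where the parity of the dimension enters. Conjugating the relation and substituting $\conjug H=\lambda^{-1}H^{-1}$ forces $\lambda\in\RR$; comparing determinants gives $\lambda^{-(2n+1)}=\abs{\det H}^2>0$, and since $2n+1$ is odd this forces $\lambda>0$. Rescaling $H$ by a real scalar I may then assume $H\conjug H=I$, i.e. $H$ is simultaneously Hermitian and orthogonal. I expect this positivity step to be the main obstacle: in even dimension the sign of $\lambda$ is genuinely free, and the negative case is precisely what produces the quaternionic form $\mathrm{O}(m,\HH)$; so the argument must use oddness in an essential way and cannot merely transcribe the symplectic computation.

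Finally I would convert $H\conjug H=I$ into a real structure. The antilinear map $J\colon v\mapsto H^{-1}\conjug v$ satisfies $J^2=\mathrm{id}$, and the preservation identity rewrites as $H\rho(\gamma)H^{-1}=\conjug{\rho(\gamma)}$, i.e. each $\rho(\gamma)$ commutes with $J$; hence $\rho$ stabilizes the real form $V=\{v\mid Hv=\conjug v\}$, a real subspace of dimension $2n+1$ with $V\otimes\CC=\CC^{2n+1}$. On $V$ the symmetric form $\transp uv$ is real-valued, since $\transp{\conjug u}\,\conjug v=\transp{(Hu)}(Hv)=\transp u\,\conjug H Hv=\transp uv$, and it is nondegenerate because its complexification is the form $I$. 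It therefore has a signature $(p,q)$ with $p+q=2n+1$, which will be the pair sought in the statement.

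Choosing a real basis of $V$ orthonormal for this form, with Gram matrix $I_{p,q}$, and assembling it as the columns of a matrix $E\in\mathrm{GL}(2n+1,\CC)$, I get $\transp EE=I_{p,q}$ and $\conjug E=HE$. Then $E^{-1}\rho E$ is real (it is the matrix of $\rho|_V$ in a real basis) and satisfies $\transp{(E^{-1}\rho E)}I_{p,q}(E^{-1}\rho E)=I_{p,q}$, so it takes values in $\mathrm{O}(p,q)$. Setting $P=ED_{p,q}^{-1}$, one computes $\transp PP=D_{p,q}^{-1}I_{p,q}D_{p,q}^{-1}=I$ from the identity $\transp{D_{p,q}}I_{p,q}D_{p,q}=I$, so $P\in\mathrm{O}(2n+1,\CC)$, and $P^{-1}\rho P=D_{p,q}(E^{-1}\rho E)D_{p,q}^{-1}$ takes values in $\mathcal{O}_{p,q}$, as required. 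The remaining points (nondegeneracy of the form on $V$ via Sylvester, reality of $E^{-1}\rho E$, and the final membership in $\mathcal{O}_{p,q}$) are routine base-change bookkeeping.
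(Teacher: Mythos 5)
Your proof is correct, but it takes a genuinely different route from the paper's. The paper handles the odd orthogonal case through the $\Phi_1$ side of \Cref{thm:char_var_real_slnc}: since the quaternionic option is unavailable in odd dimension, $\rho$ is conjugate to a representation with values in $\mathrm{GL}(2n+1,\RR)$, which then preserves the complex symmetric form $\transp{Q^{-1}}Q^{-1}=A+iB$ with $A,B$ real symmetric; it preserves $A$ and $B$ separately, and since $\det(A+\lambda B)$ is a polynomial equal to $1$ at $\lambda=i$, some real $\lambda_0$ gives a nondegenerate real form $C=A+\lambda_0B$ of signature $(p,q)$; the upgrade from $\mathrm{GL}$-conjugacy to $\mathrm{O}(2n+1,\CC)$-conjugacy is then delegated to \Cref{lemma:conj_gl_conj_o}. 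You instead use the $\Phi_2$ side, running the same scheme the paper uses for $\mathrm{Sp}(2n,\CC)$ and $\mathrm{O}(2n,\CC)$: combining the Hermitian form $H$ with orthogonality and \Cref{lemma:bilinear_forms_preserved} to get $H\conjug{H}=\lambda^{-1}I$ with $\lambda$ real, and then the determinant identity $\lambda^{-(2n+1)}=\lvert\det H\rvert^2>0$ to force $\lambda>0$ because the exponent is odd. This is exactly the point where oddness must enter (in even dimension the sign of $\lambda$ is free and the negative branch yields $\mathrm{O}(m,\HH)$), and your argument handles it correctly, whereas the paper sidesteps it by invoking the absence of a quaternionic structure in odd dimension. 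Your endgame is also different: rather than conjugating in $\mathrm{GL}$ and appealing to \Cref{lemma:conj_gl_conj_o}, you build the real structure $V=\{v\mid Hv=\conjug{v}\}$ explicitly and produce an orthogonal conjugator $P=ED_{p,q}^{-1}$ directly. The benefit of your version is uniformity with the symplectic and even orthogonal cases and an explicit identification of $(p,q)$ as the signature of the real symmetric form on $V$; the paper's version is shorter because it reuses the real/quaternionic dichotomy and the Hilbert 90 step already established elsewhere.
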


\begin{proof}
	Since the irreducible representations $\rho$ and $\conjug{\rho}$ have the same character in $\mathcal{X}_{\mathrm{O}(2n+1,\CC)}(\Gamma)$, they have the same character in $\mathcal{X}_{\mathrm{GL}(2n+1,\CC)}(\Gamma)$. By \Cref{thm:char_var_real_slnc}, there exists $Q \in \mathrm{GL}(2n+1,\CC)$ such that for all $\gamma \in \Gamma$ we have $Q\rho(\gamma)Q^{-1} = \conjug{Q\rho(\gamma)Q^{-1}} $.

	Consider the representation $\sigma \in \mathrm{Hom}(\Gamma, \mathrm{GL}(2n+1 , \RR)) $ given by $\gamma \mapsto Q\rho(\gamma)Q^{-1}$. Since it is conjugated to $\rho$, it preserves the complex bilinear form with matrix $\transp{Q^{-1}}Q^{-1}$. Let $A,B \in \mathcal{M}_{2n+1}(\RR)$ be the symmetric matrices such that $\transp{Q^{-1}}Q^{-1} = A + iB$. By considering the real and the imaginary part of the identity $\transp{\sigma(\gamma)}(A+iB)\sigma(\gamma) = A+iB $, we obtain that for all $\gamma \in \Gamma$ we have $\transp{\sigma(\gamma)}A\sigma(\gamma) = A $ and $\transp{\sigma(\gamma)}B\sigma(\gamma) = B $. Therefore, for all $\lambda \in \RR$ and for all $\gamma \in \Gamma$ we have $\transp{\sigma(\gamma)}(A+\lambda B)\sigma(\gamma) = A+ \lambda B $.
	
	We know that $\det(A+\lambda B)$ is a polynomial in $\lambda$ that equals $1$ when $\lambda = i$. Hence, there exists $\lambda_0 \in \RR$ such that $A + \lambda_0 B$ is invertible. Let $C = A + \lambda_0 B$. It is the matrix of a non-degenerate real bilinear form preserved by $\sigma$. Thus, the image of $\sigma$ is contained in $\mathrm{O}(C)$. If $C$ is of signature $(p,q)$, there exists a matrix $R \in \mathrm{GL}(2n+1 , \RR)$ such that $C = \transp{R} I_{p,q} R$. Hence, the representation $\gamma \mapsto RQ\rho(\gamma)Q^{-1}R^{-1}$ takes values in $\mathrm{O}(p,q)$.
	
	Now, consider the representation $\rho'$ given by $\rho'(\gamma) = (D_{p,q}RQ) \rho(\gamma) (D_{p,q}RQ)^{-1}$. It is conjugated to $\rho$ in $\mathrm{GL}(2n+1 , \CC)$ and takes values in $\mathcal{O}_{p,q} \subset \mathrm{O}(2n+1, \CC)$. By \Cref{lemma:conj_gl_conj_o}, since $\rho$ is irreducible, the representations $\rho$ and $\rho'$ are conjugate in $\mathrm{O}(2n+1,\CC)$.
\end{proof}

\begin{cor}
	Let $\rho \in \mathrm{Hom}(\Gamma, \mathrm{SO}(2n+1, \CC))$ be an irreducible representation. Suppose that $\chi_\rho \in \mathcal{X}_{\mathrm{SO}(2n+1,\CC)}^{\Phi}(\Gamma)$. Then there exist $p,q \in \NN$ with $p+q = 2n+1$ such that $\rho$ is conjugated in $\mathrm{SO}(2n+1,\CC)$ to a representation with values in the copy $\mathcal{SO}_{p,q}$ of $\mathrm{SO}(p,q)$.
\end{cor}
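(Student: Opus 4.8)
The plan is to deduce this corollary from the orthogonal case already settled in \Cref{prop:lifts_real_points_odd_o}, and then to promote the conjugation from $\mathrm{O}(2n+1,\CC)$ to $\mathrm{SO}(2n+1,\CC)$ using the central element $-I_{2n+1}$. First I would check that $\rho$, viewed inside $\mathrm{O}(2n+1,\CC)$, satisfies the hypotheses of \Cref{prop:lifts_real_points_odd_o}. Since $\rho$ is by definition irreducible as a $\mathrm{GL}(2n+1,\CC)$-representation, the only point to verify is that its character is fixed by $\Phi$ in $\mathcal{X}_{\mathrm{O}(2n+1,\CC)}(\Gamma)$. This is immediate from the compatibility of the embedding $\mathcal{X}_{\mathrm{SO}(2n+1,\CC)}(\Gamma)\hookrightarrow\mathcal{X}_{\mathrm{O}(2n+1,\CC)}(\Gamma)$ with the involutions: both are induced by $A\mapsto\conjug{A}$ and are read off from the same trace coordinates, so $\Phi(\chi_\rho)=\chi_\rho$ in the source forces $\Phi(\chi_\rho)=\chi_\rho$ in the target. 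Applying \Cref{prop:lifts_real_points_odd_o} then yields $P\in\mathrm{O}(2n+1,\CC)$ and integers $p,q$ with $p+q=2n+1$ such that $\sigma:=\gamma\mapsto P\rho(\gamma)P^{-1}$ takes values in the copy $\mathcal{O}_{p,q}$ of $\mathrm{O}(p,q)$.

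Next I would observe that $\sigma$ in fact lands in $\mathcal{SO}_{p,q}$. Each $\rho(\gamma)$ lies in $\mathrm{SO}(2n+1,\CC)$, hence has determinant $1$, and determinant is invariant under conjugation, so $\det\sigma(\gamma)=1$ for all $\gamma$. On the other hand, a matrix $D_{p,q}MD_{p,q}^{-1}\in\mathcal{O}_{p,q}$ has determinant $\det(M)=\pm1$, equal to $1$ precisely when $M\in\mathrm{SO}(p,q)$; equivalently $\mathcal{SO}_{p,q}=\mathcal{O}_{p,q}\cap\mathrm{SO}(2n+1,\CC)$. Combining these two facts gives $\sigma(\gamma)\in\mathcal{SO}_{p,q}$ for every $\gamma\in\Gamma$.

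It then remains to arrange that the conjugating matrix itself lies in $\mathrm{SO}(2n+1,\CC)$. If $\det P=1$ there is nothing to do. If $\det P=-1$, I would replace $P$ by $P':=-P$. Because the dimension $2n+1$ is odd, $\det(-I_{2n+1})=(-1)^{2n+1}=-1$, so $\det P'=(-1)^{2n+1}\det P=-\det P=1$ and hence $P'\in\mathrm{SO}(2n+1,\CC)$. Since $-I_{2n+1}$ is central, conjugation by $P'$ agrees with conjugation by $P$, so $P'\rho(\gamma)P'^{-1}=\sigma(\gamma)\in\mathcal{SO}_{p,q}$ for all $\gamma$, proving the claim. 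I do not expect any genuine obstacle here: the entire content beyond the orthogonal case is the determinant bookkeeping of the last step, which works exactly because $-I$ is a central element of determinant $-1$ in odd dimension. It is worth flagging that this is precisely the feature that fails in even dimension, where $\det(-I_{2n})=1$ and no such central adjustment of the determinant is available; that is why the even orthogonal and special orthogonal cases must be treated by different arguments.
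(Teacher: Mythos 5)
Your proposal is correct and follows essentially the same route as the paper: pass to $\mathcal{X}_{\mathrm{O}(2n+1,\CC)}^{\Phi}(\Gamma)$ via the embedding, apply \Cref{prop:lifts_real_points_odd_o}, note that conjugation-invariance of the determinant forces the image into $\mathcal{SO}_{p,q}$, and fix the determinant of the conjugating matrix by multiplying by the central element $-I_{2n+1}$ of determinant $-1$. Your closing remark about why this central adjustment fails in even dimension is exactly the reason the paper treats $\mathrm{SO}(2n,\CC)$ separately.
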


\begin{proof}
	Since $\mathcal{X}_{\mathrm{SO}(2n+1,\CC)}$ is embedded in $\mathcal{X}_{\mathrm{O}(2n+1,\CC)}(\Gamma)$, we know that $\chi_\rho \in \mathcal{X}_{\mathrm{O}(2n+1,\CC)}^{\Phi}(\Gamma)$. By \Cref{prop:lifts_real_points_odd_o}, we know that there exist $p,q \in \NN$ and $Q \in \mathrm{O}(2n+1,\CC)$ such that for all $\gamma \in \Gamma$ we have $Q \rho(\gamma) Q^{-1} \in \mathcal{O}_{p,q}$. Since the determinant is invariant by conjugation, we know that $Q \rho(\gamma) Q^{-1} \in \mathcal{SO}_{p,q}$. It only remains to show that we can choose $Q \in \mathrm{SO}(2n+1,\CC)$. If $\det(Q)=1$, then $Q \in \mathrm{SO}(2n+1,\CC)$, if not, $\det(Q)=-1$ and $-Q \in \mathrm{SO}(2n+1,\CC)$ conjugates $\rho$ to a representation with values in $\mathcal{SO}_{p,q}$.
\end{proof}

\subsubsection{Even orthogonal groups}

Now, we prove the last part of \Cref{thm:main_thm}, namely the one concerning even orthogonal groups. We begin by proving some preliminary results.

\begin{lemme}\label{lemma:hermitian+antiorthogonal}
	Let $H \in \mathrm{GL}(2n , \CC)$ be a Hermitian matrix such that $\transp{H}H = -I_{2n}$. Then, there exists $M \in \mathrm{GL}(2n , \CC)$ such that $\transp{\conjug{M}}HM = I_{n,n}$ and 
	$\transp{M}M
	=
	\begin{pmatrix}
	0 & I_n \\
	I_n & 0
	\end{pmatrix}$.
\end{lemme}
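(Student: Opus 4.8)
The plan is to mimic the spectral argument of \Cref{prop:hermitian+antisymplectic}, with the antisymplectic form $J$ there replaced by the identity matrix. I would first analyse the spectrum of $H$. Since $H$ is Hermitian its eigenvalues are real, and since $H$ is Hermitian the hypothesis $\transp{H}H = -I_{2n}$ reads $\conjug{H}H = -I_{2n}$. If $Hv = \lambda v$ with $\lambda \neq 0$, applying $\transp{H}$ gives $\transp{H}v = -\frac{1}{\lambda}v$; taking complex conjugates and using $\conjug{\transp{H}} = H$ yields $H\conjug{v} = -\frac{1}{\lambda}\conjug{v}$. Thus the (real) eigenvalues come in pairs $\lambda \leftrightarrow -\frac{1}{\lambda}$ of opposite sign, the conjugation $v \mapsto \conjug{v}$ being a conjugate-linear bijection between the eigenspaces of $\lambda$ and of $-\frac{1}{\lambda}$. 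Since these two eigenvalues can never coincide, $H$ is invertible with positive and negative spectrum in bijection, so its signature is exactly $(n,n)$.

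Next I would pick an orthonormal family $v_1,\dots,v_n$ of eigenvectors for the $n$ positive eigenvalues $\lambda_1,\dots,\lambda_n$, set $W = (v_1,\dots,v_n)$ and $V = (W,\conjug{W})$. Because eigenvectors of a Hermitian matrix for distinct eigenvalues are orthogonal and conjugation preserves orthonormality, $V$ is unitary and $V^*HV = \begin{pmatrix} \Lambda & 0 \\ 0 & -\Lambda^{-1}\end{pmatrix}$ with $\Lambda = \mathrm{diag}(\lambda_1,\dots,\lambda_n)$. The genuinely new point, which I expect to be the crux of the argument, is to control the \emph{symmetric} pairing $\transp{V}V$ rather than the Hermitian one. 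For positive eigenvectors $v_i,v_j$ I would compute $\lambda_i\,\transp{v_i}v_j = \transp{(Hv_i)}v_j = \transp{v_i}\,\transp{H}v_j = -\tfrac{1}{\lambda_j}\transp{v_i}v_j$, so that $\bigl(\lambda_i + \tfrac{1}{\lambda_j}\bigr)\transp{v_i}v_j = 0$ forces $\transp{v_i}v_j = 0$ since $\lambda_i,\lambda_j > 0$. Hence $\transp{W}W = 0$, and by conjugation $\transp{\conjug{W}}\conjug{W} = 0$; together with $\transp{\conjug{W}}W = I_n$ (orthonormality) and its conjugate $\transp{W}\conjug{W} = I_n$ this gives $\transp{V}V = \begin{pmatrix} 0 & I_n \\ I_n & 0\end{pmatrix}$.

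Finally I would absorb the eigenvalues into a real diagonal scaling. Setting $D = \begin{pmatrix} \Lambda^{-1/2} & 0 \\ 0 & \Lambda^{1/2}\end{pmatrix}$ and $M = VD$, and using that $D$ is real (so $\transp{\conjug{M}} = DV^*$ and $\transp{M} = D\,\transp{V}$), one checks $\transp{\conjug{M}}HM = D\begin{pmatrix} \Lambda & 0 \\ 0 & -\Lambda^{-1}\end{pmatrix}D = I_{n,n}$ and $\transp{M}M = D\begin{pmatrix} 0 & I_n \\ I_n & 0\end{pmatrix}D = \begin{pmatrix} 0 & I_n \\ I_n & 0\end{pmatrix}$, the latter because $D$ intertwines the two diagonal blocks $\Lambda^{\pm 1/2}$ across the off-diagonal permutation. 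This $M$ has both required properties. The only delicate steps are the eigenvalue-pairing count giving signature $(n,n)$ and the vanishing $\transp{v_i}v_j = 0$; the remaining verifications are routine block-matrix bookkeeping.
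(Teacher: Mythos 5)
Your proposal is correct and follows essentially the same route as the paper: the eigenvalue pairing $\lambda\mapsto-\lambda^{-1}$ via $v\mapsto\conjug{v}$, the signature $(n,n)$, the unitary matrix $V=(W,\conjug{W})$ diagonalizing $H$, the computation $\transp{V}V=\left(\begin{smallmatrix}0&I_n\\ I_n&0\end{smallmatrix}\right)$, and the final rescaling by $\mathrm{diag}(\Lambda^{-1/2},\Lambda^{1/2})$ all match the paper's argument. The only (harmless) variation is that you derive $\transp{W}W=0$ from the relation $\transp{H}v_j=-\lambda_j^{-1}v_j$, whereas the paper obtains it directly from orthogonality of eigenvectors of the Hermitian matrix $H$ for distinct eigenvalues.
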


\begin{proof}
	Let $\lambda \in \RR^*$ be an eigenvalue of $H$ and $v \in \CC^{2n}$ a corresponding eigenvector. We know that $Hv = \lambda v$. By taking the complex conjugation, we know that $\conjug{H} \conjug{v} = \lambda \conjug{v}$. Since $H$ is Hermitian and $\transp{H}H = -I_{2n}$, $\conjug{H} = -H^{-1}$ and we can re-write this last equality as $-\lambda^{-1}\conjug{v} = H \conjug{v} $. Thus, $\conjug{v}$ is an eigenvector of $H$ for the eigenvalue $-\lambda^{-1}$. Therefore, the spectrum of $H$ (counted with multiplicity) is stable under $\lambda \mapsto - \lambda^{-1}$. In particular, $H$ has signature $(n,n)$.
	
	Let $\lambda_1 \geq \lambda_2 \geq \cdots \geq \lambda_n > 0$ be the positive eigenvalues of $H$. The negative eigenvalues of $H$ are then $-\lambda_1^{-1},\dots,-\lambda_n^{-1}$. Let $v_1,\dots,v_n \in \CC^{2n}$ be a set of orthonormal vectors such that for $i \in \{1,\dots , n\}$ we have $Hv_i = \lambda_i v_i$. Thus, $\conjug{v_1},\dots,\conjug{v_n}$ is an orthonormal set of eigenvectors of $H$ for the eigenvalues $-\lambda_1^{-1},\dots,-\lambda_n^{-1}$.
	
	Since they are eigenvectors for two disjoints sets of eigenvalues of $H$, the sets $(v_1,\dots,v_n)$ and $(\conjug{v_1},\dots,\conjug{v_n})$ are orthogonal, and $(v_1,\dots,v_n,\conjug{v_1},\dots,\conjug{v_n})$ is an orthonormal basis of $\CC^{2n}$. Let $V \in \mathrm{GL}(2n , \CC)$ be the matrix with columns $v_1,\dots,v_n,\conjug{v_1},\dots,\conjug{v_n}$. By construction, $V$ is unitary and, if $\Lambda \in \mathrm{GL}(n,\RR)$ is the diagonal matrix with entries $(\lambda_1,\dots,\lambda_n)$ we have
	\[
	V^*HV =
	\begin{pmatrix}
	\Lambda & 0 \\
	0 & -\Lambda^{-1}
	\end{pmatrix}.
	\]
	
	We compute now $\transp{V}V$. The entries of this matrix are of the form 
	$\transp{v}w$ where $v$ and $w$ belong to the set $ \{ v_1,\dots,v_n,\conjug{v_1},\dots,\conjug{v_n} \}$. Since it is a set of orthonormal vectors, $\transp{v}w =0$ unless $v = \conjug{w}$, and in that case we have $\transp{\conjug{w}}w = 1$. Hence,
	\[
	\transp{V}V =
	\begin{pmatrix}
	0 & I_n \\
	I_n & 0
	\end{pmatrix}.
	\]
	
	Let $D\in \mathrm{GL}(n,\RR)$ be the diagonal matrix with entries $(\lambda_1)^{-\frac{1}{2}},\dots,(\lambda_n)^{-\frac{1}{2}}$, 
	$A
	=
	\begin{pmatrix}
	D & 0 \\
	0 & \transp{D}^{-1}
	\end{pmatrix}
	$
	and $M=VA$.
	On the one hand, we have 
	\begin{align*}
		\transp{\conjug{M}}HM &= \transp{\conjug{A}}\transp{\conjug{V}}HVA \\
		&= \transp{\conjug{A}}
		\begin{pmatrix}
			\Lambda & 0 \\
			0 & -\Lambda^{-1}
		\end{pmatrix} A \\
		&= I_{n,n}.
	\end{align*}
	On the other hand, the matrix $A$ is orthogonal for the symmetric bilinear form 
	$\begin{pmatrix}
	0 & I_n \\
	I_n & 0
	\end{pmatrix} $, hence 
	\begin{align*}
		\transp{M}M &= \transp{A}\transp{V}VA \\
		&= \transp{A}
		\begin{pmatrix}
			0 & I_n \\
			I_n & 0
		\end{pmatrix}
		A \\
		&=\begin{pmatrix}
			0 & I_n \\
			I_n & 0
		\end{pmatrix}.
	\end{align*}
	
\end{proof}

\begin{prop}\label{prop:hermitian+aniorthogonal}
	Let $H \in \mathrm{GL}(2n , \CC)$ be a Hermitian matrix such that $\transp{H}H = -I_{2n}$. Then, there exists $M \in \mathrm{O}(2n,\CC)$ such that $\transp{\conjug{M}}HM = iJ$.
\end{prop}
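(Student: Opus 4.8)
The plan is to reduce the statement to the normal form already established in \Cref{lemma:hermitian+antiorthogonal} and then apply one fixed, explicit change of basis that normalizes the Hermitian and the symmetric bilinear data at the same time. Note first that the target $iJ$ is itself Hermitian and satisfies $\transp{(iJ)}(iJ) = -I_{2n}$, so it lies in the same class as $H$, and the content of the proposition is that $H$ can be moved onto this specific representative by an element of $\mathrm{O}(2n,\CC)$. Concretely, I would begin by invoking \Cref{lemma:hermitian+antiorthogonal} to obtain $M_0 \in \mathrm{GL}(2n,\CC)$ with
\[
\transp{\conjug{M_0}} H M_0 = I_{n,n}
\qquad\text{and}\qquad
\transp{M_0} M_0 = B, \quad\text{where } B = \begin{pmatrix} 0 & I_n \\ I_n & 0 \end{pmatrix}.
\]
This replaces the arbitrary $H$ by the two standard forms $I_{n,n}$ and $B$, so it remains only to pass from the pair $(I_{n,n},B)$ to the target pair $(iJ,I_{2n})$.

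The core of the argument is to produce a single matrix $N \in \mathrm{GL}(2n,\CC)$ satisfying \emph{both}
\[
\transp{\conjug{N}}\, I_{n,n}\, N = iJ
\qquad\text{and}\qquad
\transp{N}\, B\, N = I_{2n}.
\]
Looking for $N$ among scalar block matrices $N = \begin{pmatrix} aI_n & bI_n \\ cI_n & dI_n \end{pmatrix}$ turns these two matrix identities into a small scalar system: the first forces $|a|=|c|$, $|b|=|d|$ and $\bar a b - \bar c d = i$, while the second forces $2ac = 2bd = 1$ and $ad + bc = 0$. A solution is
\[
N = \frac{1}{\sqrt{2}} \begin{pmatrix} I_n & iI_n \\ I_n & -iI_n \end{pmatrix},
\]
and I would simply verify by direct block multiplication that it meets all the requirements.

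Finally, I would set $M = M_0 N$ and conclude by composition:
\[
\transp{\conjug{M}} H M = \transp{\conjug{N}}\bigl(\transp{\conjug{M_0}} H M_0\bigr) N = \transp{\conjug{N}}\, I_{n,n}\, N = iJ,
\]
and, analogously, $\transp{M} M = \transp{N}\bigl(\transp{M_0} M_0\bigr) N = \transp{N}\, B\, N = I_{2n}$, so that $M \in \mathrm{O}(2n,\CC)$ and $\transp{\conjug{M}}HM = iJ$, which is exactly the claim. The only non-routine step is finding the change of basis $N$; everything else is the preliminary lemma together with bookkeeping verifications. The mild obstacle is recognizing that a single scalar block matrix can normalize the Hermitian and bilinear structures simultaneously — but once the defining equations are written out, solving them is elementary and the remaining computations are mechanical.
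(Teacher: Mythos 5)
Your proof is correct and follows essentially the same route as the paper: both normalize $H$ to the pair $(I_{n,n}, \begin{pmatrix} 0 & I_n \\ I_n & 0 \end{pmatrix})$ via \Cref{lemma:hermitian+antiorthogonal} and then transfer to $(iJ, I_{2n})$ by a second change of basis. The only (cosmetic) difference is that you exhibit that second change of basis explicitly as $N = \tfrac{1}{\sqrt{2}}\begin{pmatrix} I_n & iI_n \\ I_n & -iI_n \end{pmatrix}$, whereas the paper obtains it by applying the same lemma to $iJ$ and inverting the resulting matrix.
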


\begin{proof}
	On the one hand, by \Cref{lemma:hermitian+antiorthogonal}, we know that there exists $A \in \mathrm{GL}(2n , \CC)$ such that $\transp{\conjug{A}}HA = I_{n,n}$ and 
	$\transp{A}A
	=
	\begin{pmatrix}
	0 & I_n \\
	I_n & 0
	\end{pmatrix}$.
	On the other hand, since $iJ$ is a Hermitian matrix satisfying $\transp{(iJ)}(iJ) = -I_{2n}$, we can apply \Cref{lemma:hermitian+antiorthogonal} again. Thus, there exists $B \in \mathrm{GL}(2n , \CC)$ such that $\transp{\conjug{B}}(iJ)B = I_{n,n}$ and 
	$\transp{B}B
	=
	\begin{pmatrix}
	0 & I_n \\
	I_n & 0
	\end{pmatrix}$.
	Let $M = AB^{-1}$. We have:
	\begin{align*}
		\transp{\conjug{M}}HM &= \transp{\conjug{B}}^{-1}\transp{\conjug{A}}HAB^{-1}\\
		&= \transp{\conjug{B}}^{-1} I_{n,n}  B^{-1} \\
		&= iJ
	\end{align*}
	and also
	\begin{align*}
		\transp{M}M &= \transp{B^{-1}}\transp{A}AB^{-1} \\
		&=\transp{B^{-1}}
		\begin{pmatrix}
			0 & I_n \\
			I_n & 0
		\end{pmatrix}
		B^{-1} \\
		&= I_{2n}.
	\end{align*}
\end{proof}

We are now able to prove \Cref{thm:main_thm} for even orthogonal groups. The result is given in the following proposition.

\begin{prop}\label{prop:lifts_real_points_even_o}
	Let $\rho \in \mathrm{Hom}(\Gamma, \mathrm{O}(2n, \CC))$ be an irreducible representation such that $\chi_\rho \in \mathcal{X}_{\mathrm{O}(2n,\CC)}^{\Phi}(\Gamma)$. Then either $\rho$ is conjugated in $\mathrm{O}(2n,\CC)$ to a representation with values in $\mathrm{O}(n,\HH)$ or there exist $p,q \in \NN$ with $p+q = 2n$ such that $\rho$ is conjugated in $\mathrm{O}(2n,\CC)$ to a representation with values in in the copy $\mathcal{O}_{p,q}$ of $\mathrm{O}(p,q)$.
\end{prop}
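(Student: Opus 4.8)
The plan is to run the argument in close parallel with the symplectic case, exploiting that on $\mathrm{O}(2n,\CC)$ the two involutions coincide ($\Phi_1=\Phi_2=\Phi$). Since $\rho$ is orthogonal it preserves the symmetric form $I_{2n}$, i.e. $\transp{\rho(\gamma)}\rho(\gamma)=I_{2n}$; and since $\chi_\rho$ is fixed by $\Phi=\Phi_2$, the $\mathrm{GL}(2n,\CC)$ version of \Cref{thm:char_var_real_slnc} provides a $\rho$-invariant Hermitian form $H$, so $\transp{\conjug{\rho(\gamma)}}H\rho(\gamma)=H$. As in the symplectic proof, these two identities give $H\rho(\gamma)H^{-1}=\transp{\conjug{\rho(\gamma)}}^{-1}$, so the representation $\sigma(\gamma)=H\rho(\gamma)H^{-1}$ again takes values in $\mathrm{O}(2n,\CC)$ (because $A\mapsto\transp{\conjug A}^{-1}$ stabilizes $\mathrm{O}(2n,\CC)$) and preserves the bilinear form $\transp{H}^{-1}H^{-1}$. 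By \Cref{lemma:bilinear_forms_preserved} this form is proportional to $I_{2n}$, and unwinding the proportionality together with the Hermitian condition $\transp H=\conjug H$ yields $\transp{H}H=\mu I_{2n}$; a trace argument (comparing $\operatorname{tr}(H\conjug H)$ with $\operatorname{tr}(\conjug H H)$) forces $\mu\in\RR^{*}$, and rescaling $H$ by a real constant normalizes $\mu=\pm1$. The sign of $\mu$ will distinguish the two conclusions.

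In the case $\mu=-1$ we have a Hermitian $H$ with $\transp{H}H=-I_{2n}$, which is exactly the hypothesis of \Cref{prop:hermitian+aniorthogonal}. That proposition gives $M\in\mathrm{O}(2n,\CC)$ with $\transp{\conjug M}HM=iJ$. Setting $\tau(\gamma)=M^{-1}\rho(\gamma)M$, the representation $\tau$ is conjugate to $\rho$ inside $\mathrm{O}(2n,\CC)$, still preserves $I_{2n}$, and preserves the Hermitian form $\transp{\conjug M}HM=iJ$. The two conditions $\transp{\tau(\gamma)}\tau(\gamma)=I_{2n}$ and $\transp{\conjug{\tau(\gamma)}}(iJ)\tau(\gamma)=iJ$ combine, using $\transp{\tau(\gamma)}=\tau(\gamma)^{-1}$, into $\conjug{\tau(\gamma)}=J\tau(\gamma)J^{-1}$, which is precisely the defining relation of $\mathrm{O}(n,\HH)$. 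Hence $\tau$ lands in $\mathrm{O}(n,\HH)$, giving the first alternative of the statement.

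In the case $\mu=+1$ we have $H$ Hermitian with $\transp{H}H=I_{2n}$, equivalently $\conjug H=H^{-1}$. I would then establish the companion of \Cref{lemma:hermitian+antiorthogonal}: the relation $\conjug H=H^{-1}$ makes the spectrum of $H$ stable under $\lambda\mapsto\lambda^{-1}$ through $v\mapsto\conjug v$, and assembling a conjugation-compatible eigenbasis produces $M\in\mathrm{O}(2n,\CC)$ (so $\transp M M=I_{2n}$) with $\transp{\conjug M}HM=I_{p,q}$ for suitable $p,q$. Then $\tau(\gamma)=M^{-1}\rho(\gamma)M$ is $\mathrm{O}(2n,\CC)$-conjugate to $\rho$, preserves $I_{2n}$ and the Hermitian form $I_{p,q}$; conjugating by $D_{p,q}$ one checks that $D_{p,q}^{-1}\tau(\gamma)D_{p,q}$ is real and preserves $I_{p,q}$, hence lies in $\mathrm{O}(p,q)$, so $\tau$ takes values in $\mathcal{O}_{p,q}$, giving the second alternative.

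The main obstacle is the $\mu=+1$ reduction: unlike the definite target $iJ$ produced by \Cref{prop:hermitian+aniorthogonal}, bringing $H$ to the indefinite $I_{p,q}$ requires tracking the signs of the eigenvalue pairs (positive pairs and the self-paired eigenvalue $+1$ contributing to $p$, negative pairs and the eigenvalue $-1$ to $q$) while simultaneously keeping the conjugating matrix complex-orthogonal, which is the delicate bookkeeping. A safe alternative, avoiding a new spectral lemma, is to observe that the dichotomy $\mu=\pm1$ matches the real/quaternionic dichotomy of \Cref{thm:char_var_real_slnc}: when $\rho$ is $\mathrm{GL}(2n,\CC)$-conjugate to a real representation I would instead argue as in the odd orthogonal case, writing the invariant complex symmetric form as $A+iB$ with $A,B$ real symmetric, choosing $\lambda_0\in\RR$ with $A+\lambda_0 B$ invertible of signature $(p,q)$, reducing to $I_{p,q}$ and conjugating into $\mathcal{O}_{p,q}$. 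In either treatment one finishes by invoking \Cref{lemma:conj_gl_conj_o}, which upgrades the $\mathrm{GL}(2n,\CC)$-conjugacy to a conjugacy inside $\mathrm{O}(2n,\CC)$, exactly as needed for the statement.
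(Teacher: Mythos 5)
Your proposal is correct and, in the form you actually commit to, follows the same route as the paper: the same reduction to a Hermitian $H$ with $\transp{H}H=\pm I_{2n}$, the identical treatment of the case $\transp{H}H=-I_{2n}$ via \Cref{prop:hermitian+aniorthogonal} landing in $\mathrm{O}(n,\HH)$, and — for the case $\transp{H}H=I_{2n}$ — exactly your ``safe alternative,'' namely reducing to a real representation and rerunning the odd-orthogonal argument with $A+\lambda_0 B$, finishing with \Cref{lemma:conj_gl_conj_o}. The only step you leave implicit is the bridge from $\transp{H}H=I_{2n}$ to the existence of a real conjugate: the paper makes this precise by rewriting it as $\conjug{H}H=I_{2n}$ and invoking Hilbert's Theorem 90 to write $H=\conjug{Q}^{-1}Q$, so that $Q\rho Q^{-1}$ is real; your first-choice route (a direct complex-orthogonal congruence taking $H$ to $I_{p,q}$) is the one the paper avoids, for precisely the sign-bookkeeping reasons you identify.
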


\begin{proof}
	Since the irreducible representations $\rho$ and $\transp{\conjug{\rho}}^{-1}$ have the same character in $\mathcal{X}_{\mathrm{O}(2n,\CC)}(\Gamma)$, they have the same character in $\mathcal{X}_{\mathrm{GL}(2n,\CC)}(\Gamma)$. By \Cref{thm:char_var_real_slnc}, the image of $\rho$ preserves a non-degenerate Hermitian form $H$. We know that for all $\gamma \in \Gamma$, we have
	\begin{align*}
		\transp{\rho(\gamma)}\rho(\gamma) = I_{2n} \\
		\transp{\conjug{\rho(\gamma)}} H \rho(\gamma) = H
	\end{align*}
	Hence, $\transp{\conjug{\rho(\gamma)}}^{-1} = H \rho(\gamma) H^{-1}$. On the one hand, the representation $\gamma \mapsto \transp{\conjug{\rho(\gamma)}}^{-1}$ is irreducible and orthogonal, since $\mathrm{O}(2n,\CC)$ is stable by $A \mapsto \transp{\conjug{A}}^{-1}$. On the other hand, the representation $\gamma \mapsto H \rho(\gamma) H^{-1}$ preserves the bilinear form $\transp{H}H$. By \Cref{lemma:bilinear_forms_preserved}, there exists $\lambda \in \CC^*$ such that $\transp{H}H = \lambda I_{2n}$. Since $H$ is Hermitian, by conjugating this last equality and taking the trace, we obtain that $2n\lambda = 2n\conjug{\lambda}$, and so $\lambda$ is real. By multiplying $H$ by a real constant, we can suppose that $\lambda = \pm 1$. Thus, $\transp{H}H = \pm I_{2n}$ and we have two cases.
	
	\paragraph{First case: $\transp{H}H = I_{2n}$.}
	Since $H$ is Hermitian, we have $\conjug{H}H = I_{2n}$. By Lemma 3.15 of \cite{acosta_character_2019}\footnote{Or Hilbert's Theorem 90.} we know that there exists $Q \in \mathrm{GL}(2n,\CC)$ such that $H = \conjug{Q}^{-1}Q $. Therefore, for all $\gamma \in \Gamma$, we have $Q\rho(\gamma)Q^{-1} = \conjug{Q\rho(\gamma)Q^{-1}}$, i.e. $Q\rho(\gamma)Q^{-1} \in \mathrm{GL}(2n , \RR)$.
	The rest of the proof is exactly the same as the proof of \Cref{prop:lifts_real_points_odd_o}.

	\paragraph{Second case: $\transp{H}H = -I_{2n}$.}
	By \Cref{lemma:hermitian+antiorthogonal} and \Cref{prop:hermitian+aniorthogonal}, there exists $M \in \mathrm{O}(2n,\CC)$ such that $\transp{\conjug{M}}HM = iJ$. Consider the representation $\sigma \in \mathrm{O}(2n,\CC)$ given by $\sigma(\gamma) = M\rho(\gamma)M^{-1}$. Since it is conjugated to $\rho$, it preserves the Hermitian form $\transp{\conjug{M}}HM = iJ$. Therefore, if $\gamma \in \Gamma$, we have
	\begin{align*}
		\transp{\conjug{\sigma(\gamma)}} iJ \sigma(\gamma) &= iJ \\
		\transp{\conjug{\sigma(\gamma)}} J \sigma(\gamma) &= J \\
		\conjug{\sigma(\gamma)}^{-1} J \sigma(\gamma) &= J \\
		J \sigma(\gamma) &= \conjug{\sigma(\gamma)} J \\
	\end{align*}
	Hence, $\sigma(\gamma) \in \mathrm{GL}(n,\HH)$, and $\sigma \in \mathrm{Hom}(\Gamma, \mathrm{O}(n,\HH))$.
	
\end{proof}

Finally, we conclude with the proof for $\mathrm{SO}(2n,\CC)$. This completes the proof of \Cref{thm:main_thm}.

\begin{prop}
	Let $\rho \in \mathrm{Hom}(\Gamma, \mathrm{SO}(2n, \CC))$ be an irreducible representation such that $\chi_\rho \in \mathcal{X}_{\mathrm{SO}(2n,\CC)}^{\Phi}(\Gamma)$. Then either
	there exist $p,q \in \NN$ with $p+q = 2n$ such that $\rho$ is conjugated in $\mathrm{SO}(2n,\CC)$ to a representation with values in the copy $\mathcal{SO}_{p,q}$ of $\mathrm{SO}(p,q)$,
	$\rho$ is conjugated in $\mathrm{SO}(2n,\CC)$ to a representation with values in $\mathrm{SO}(n,\HH)$ or 
	$\rho$ is conjugated in $\mathrm{SO}(2n,\CC)$ to a representation with values in $\mathrm{SO}^{-}(n,\HH)$.
\end{prop}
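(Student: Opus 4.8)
The plan is to reduce to the orthogonal case already established in \Cref{prop:lifts_real_points_even_o} and then track determinants in order to descend from $\mathrm{O}(2n,\CC)$-conjugacy to $\mathrm{SO}(2n,\CC)$-conjugacy. First I would note that the natural map $\mathcal{X}_{\mathrm{SO}(2n,\CC)}(\Gamma) \to \mathcal{X}_{\mathrm{O}(2n,\CC)}(\Gamma)$ is induced by the inclusion of representation varieties, so on functions it is restriction and in particular sends trace coordinates to trace coordinates; since both copies of $\Phi$ act by complex conjugation of coordinates, the map intertwines them. Writing $\chi^{\mathrm{O}}$ for images in $\mathcal{X}_{\mathrm{O}(2n,\CC)}(\Gamma)$, the hypothesis $\chi_\rho = \Phi(\chi_\rho) = \chi_{\conjug\rho}$ then gives $\chi^{\mathrm{O}}_\rho = \chi^{\mathrm{O}}_{\conjug\rho} = \Phi(\chi^{\mathrm{O}}_\rho)$. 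As $\rho$ is irreducible as a $\mathrm{GL}(2n,\CC)$-representation, it is an irreducible $\mathrm{O}(2n,\CC)$-representation whose character lies in $\mathcal{X}^{\Phi}_{\mathrm{O}(2n,\CC)}(\Gamma)$, so \Cref{prop:lifts_real_points_even_o} applies and yields $P \in \mathrm{O}(2n,\CC)$ conjugating $\rho$ either to a representation valued in $\mathcal{O}_{p,q}$ for some $p,q$ with $p+q=2n$, or to one valued in $\mathrm{O}(n,\HH) = \mathrm{SO}(n,\HH)$.

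It then remains to upgrade this $\mathrm{O}(2n,\CC)$-conjugacy to an $\mathrm{SO}(2n,\CC)$-conjugacy, which I would do by inspecting whether $\det(P) = 1$ or $\det(P) = -1$. In the $\mathcal{O}_{p,q}$ alternative, invariance of the determinant under conjugation and the fact that $\rho$ is valued in $\mathrm{SO}(2n,\CC)$ force the conjugate $\gamma \mapsto P\rho(\gamma)P^{-1}$ to land in $\mathcal{O}_{p,q} \cap \mathrm{SO}(2n,\CC) = \mathcal{SO}_{p,q}$. If $\det(P) = 1$ we are done; if $\det(P) = -1$, I would choose $R \in \mathcal{O}_{p,q}$ with $\det(R) = -1$ and replace $P$ by $RP \in \mathrm{SO}(2n,\CC)$. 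Since $\mathcal{SO}_{p,q}$ is normal of index two in $\mathcal{O}_{p,q}$, the matrix $RP$ still conjugates $\rho$ into $\mathcal{SO}_{p,q}$. This settles the $\mathcal{O}_{p,q}$ case entirely inside $\mathrm{SO}(2n,\CC)$.

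In the quaternionic alternative, $\gamma \mapsto P\rho(\gamma)P^{-1}$ is valued in $\mathrm{SO}(n,\HH) \subset \mathrm{SO}(2n,\CC)$. If $\det(P) = 1$, then $P \in \mathrm{SO}(2n,\CC)$ and $\rho$ is $\mathrm{SO}(2n,\CC)$-conjugate to a representation valued in $\mathrm{SO}(n,\HH)$. If $\det(P) = -1$, I would invoke the fixed matrix $P_0 \in \mathrm{O}(2n,\CC) \setminus \mathrm{SO}(2n,\CC)$ defining $\mathrm{SO}^-(n,\HH) = P_0\,\mathrm{SO}(n,\HH)\,P_0^{-1}$ and set $P' = P_0 P$, which has determinant $1$; then $\gamma \mapsto P'\rho(\gamma)P'^{-1}$ is valued in $\mathrm{SO}^-(n,\HH)$, producing the third alternative.

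The conceptual heart of the argument — and the step I expect to be the genuine obstacle — is exactly this last dichotomy. Unlike the $\mathcal{O}_{p,q}$ case, one cannot in general repair the sign when $\det(P) = -1$ so as to remain in $\mathrm{SO}(n,\HH)$: by the remark preceding the statement, no determinant $-1$ element of $\mathrm{O}(4m,\CC)$ normalizes $\mathrm{SO}(2m,\HH)$ by conjugation when $2n = 4m$. This is precisely why the genuinely new real form $\mathrm{SO}^-(n,\HH)$ must be listed, and confirming that the two sub-cases of the quaternionic alternative really fall into distinct $\mathrm{SO}(2n,\CC)$-conjugacy classes is the delicate point; in dimension $4m+2$ the obstruction disappears and the last two alternatives merge, consistently with \Cref{table:real_forms}.
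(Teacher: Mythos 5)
Your proof is correct and follows essentially the same route as the paper's: project to $\mathcal{X}_{\mathrm{O}(2n,\CC)}(\Gamma)$, apply \Cref{prop:lifts_real_points_even_o}, and repair the sign of $\det(P)$ case by case using a determinant $-1$ element of the normalizer of the target real form. The only cosmetic difference is in the quaternionic case with $\det(P)=-1$: the paper splits according to the parity of $n$ and, for $n$ odd, multiplies by the swap matrix $\bigl(\begin{smallmatrix} 0 & I_n \\ I_n & 0 \end{smallmatrix}\bigr)$ (which has determinant $-1$ and normalizes $\mathrm{SO}(n,\HH)$) so as to land in $\mathrm{SO}(n,\HH)$ itself rather than in a conjugate copy, whereas you use $P_0$ uniformly and observe afterwards that the two alternatives coincide in dimension $4m+2$ --- which amounts to the same thing.
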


\begin{proof}
	Let $\pi : \mathcal{X}_{\mathrm{SO}(2n,\CC)} \to \mathcal{X}_{\mathrm{O}(2n,\CC)}(\Gamma)$ be the projection map. Since $\chi_\rho \in \mathcal{X}_{\mathrm{SO}(2n,\CC)}^{\Phi}(\Gamma)$, we know that $\pi(\chi_\rho) \in \mathcal{X}_{\mathrm{O}(2n,\CC)}^{\Phi}(\Gamma)$. By \Cref{prop:lifts_real_points_even_o}, we know that either $\rho$ is conjugated in $\mathrm{O}(2n,\CC)$ to a representation with values in $\mathrm{O}(n,\HH)$ or there exist $p,q \in \NN$ with $p+q = 2n$ such that $\rho$ is conjugated in $\mathrm{O}(2n,\CC)$ to a representation with values in the copy $\mathcal{O}_{p,q}$ of $\mathrm{O}(p,q)$. 
	Since the determinant is invariant by conjugation, we know that there exists a real form $G_\RR$ of $\mathrm{SO}(2n,\CC)$ and $Q \in \mathrm{O}(2n,\CC)$ such that $Q \rho(\gamma) Q^{-1} \in G_\RR$. It only remains to show that we can choose $Q \in \mathrm{SO}(2n,\CC)$. If $\det(Q)=1$, then $Q \in \mathrm{SO}(2n,\CC)$ and we have nothing to do. 
	Now, suppose that $\det(Q)=-1$. We need to consider three cases.
	
	\paragraph{First case: $G_\RR = \mathcal{SO}(p,q)$.}  In this case, consider the matrix $I_{2n-1,1} \in \mathcal{O}(p,q) \setminus \mathcal{SO}(p,q)$. Since it has determinant $-1$ and stabilizes $\mathcal{SO}(p,q)$ by conjugation, we know that $I_{2n-1,1} Q \in \mathrm{SO}(2n,\CC)$ and $(I_{2n-1,1} Q) \rho(\gamma) (I_{2n-1,1} Q)^{-1} \in \mathcal{SO}(p,q)$ for all $\gamma \in \Gamma$.
	
	\paragraph{Second case: $G_\RR = \mathrm{SO}(2n,\HH)$ and $n=2m+1$ is odd.}  In this case, consider the matrix $K =
	\begin{pmatrix}
	0 & I_n \\
	I_n & 0
	\end{pmatrix}
	 \in \mathrm{O}(2n,\CC) \setminus \mathrm{SO}(2n,\CC)$. It has determinant $-1$ and stabilizes $\mathrm{SO}(2n,\HH)$ by conjugation, so $K Q \in \mathrm{SO}(2n,\CC)$ and $(K Q) \rho(\gamma) (K Q)^{-1} \in \mathrm{SO}(2n,\HH)$ for all $\gamma \in \Gamma$.
	 
	 \paragraph{Third case: $G_\RR = \mathrm{SO}(2n,\HH)$ and $n=2m$ is even.}  In this case, consider a matrix $P_0
	 \in \mathrm{O}(2n,\CC) \setminus \mathrm{SO}(2n,\CC)$ that conjugates $\mathrm{SO}(2n,\HH)$ and $\mathrm{SO}^{-}(2n,\HH)$. It has determinant $-1$, so $P_0 Q \in \mathrm{SO}(2n,\CC)$ and $(P_0 Q) \rho(\gamma) (P_0 Q)^{-1} \in \mathrm{SO}^{-}(2n,\HH)$ for all $\gamma \in \Gamma$.

\end{proof}

\section{Character varieties for the compact real forms}

 Recall that one of the motivations for the construction of character varieties is that in general, if $\Gamma$ is a finitely generated group and $G$ a topological group, the quotient topology on $\mathrm{Hom}(\Gamma,G)/G$ may not be Hausdorff. However, if $G$ is compact, $\mathrm{Hom}(\Gamma,G)/G$ is a compact space for the quotient topology.
 A question which arises naturally is if $K$ is a compact real form of a classical complex group, the topological quotient $\mathrm{Hom}(\Gamma,K)/K$ is homeomorphic to the character variety $\mathcal{X}_K(\Gamma)$ defined as a subset of the character variety of the complex group.
 In \cite{acosta_character_2019}, this fact is proven for the $\mathrm{SU}(n)$-character varieties. The exact same proof works for $\mathrm{U}(n)$-character varieties.
 
 We give here a general proof, where the main part is given in Remark 4.7 of the article of Florentino and Lawton \cite{florentino_character_2013}. The key point is the existence and uniqueness of the polar decomposition. Recall that for $M \in \mathrm{GL}(n,\CC)$, there is a unique pair $(U,H)$ with $U \in \mathrm{U}(n)$ and $H$ a positive definite Hermitian matrix such that $M=UH$.
 
 The same polar decomposition exists also for the classical groups. In that case, as a consequence of the uniqueness in the polar decomposition, the two factors are also in the classical group, as stated in the following remark.
 
 \begin{rem}\label{rem:polar_dec_subgroups}
 	Let $U \in \mathrm{U}(n)$, $H \in \mathrm{GL}(n,\CC)$ a positive definite Hermitian matrix and $\mathrm{G}_\CC$ be one of the groups $\mathrm{SL}(n,\CC), \mathrm{O}(n,\CC), \mathrm{SO}(n,\CC)$ and $\mathrm{Sp}(2n,\CC)$. If $UH \in \mathrm{G}_\CC$, then $U \in \mathrm{G}_\CC$ and $H \in \mathrm{G}_\CC$.
 \end{rem} 
 
%
%
%
%
 
  The following statement is the same as Remark 4.7 of \cite{florentino_character_2013}. We give the proof again for completeness.
 
 \begin{prop}\label{prop:conj_g_implies_conj_compact}
 	Let $\mathrm{G}_\CC$ be one of the groups $\mathrm{GL}(n,\CC)$, $\mathrm{SL}(n,\CC), \mathrm{O}(n,\CC), \mathrm{SO}(n,\CC)$ and $\mathrm{Sp}(2n,\CC)$, and $K$ its compact real form. Let $\rho,\rho' \in \mathrm{Hom}(\Gamma,K)$ be two representations conjugated in $G_\CC$. Then, $\rho$ and $\rho'$ are conjugated in $K$.
 \end{prop}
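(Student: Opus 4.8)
The plan is to begin with any $g \in G_\CC$ realizing $g\rho(\gamma)g^{-1} = \rho'(\gamma)$ for all $\gamma \in \Gamma$, and then to replace it by a better conjugating matrix lying in $K$ by means of the polar decomposition. The decisive feature is that $K$ is contained in the unitary group $\mathrm{U}(N)$ (with $N = n$, or $N = 2n$ in the symplectic case), so that every $\rho(\gamma)$ and $\rho'(\gamma)$ is unitary, and hence $\rho(\gamma)^{*} = \rho(\gamma)^{-1}$ and $\rho'(\gamma)^{*} = \rho'(\gamma)^{-1}$.

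First I would extract a second conjugating matrix from this unitarity. Taking adjoints in $\rho'(\gamma) = g\rho(\gamma)g^{-1}$ yields $\rho'(\gamma)^{-1} = (g^{*})^{-1}\rho(\gamma)^{-1}g^{*}$, and substituting $\gamma^{-1}$ for $\gamma$ gives $\rho'(\gamma) = (g^{*})^{-1}\rho(\gamma)g^{*}$. Thus both $g$ and $(g^{*})^{-1}$ conjugate $\rho$ into $\rho'$; since two matrices that conjugate $\rho$ to the same representation differ by a matrix commuting with the image, the product $g^{*}g$ commutes with every $\rho(\gamma)$, that is, with the whole image of $\rho$.

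Next I would invoke the polar decomposition $g = UH$, where $U \in \mathrm{U}(N)$ and $H = (g^{*}g)^{1/2}$ is the unique positive definite Hermitian square root of $g^{*}g$. Since the positive square root of a positive definite Hermitian matrix is a polynomial in that matrix, $H$ is a polynomial in $g^{*}g$ and therefore also commutes with the image of $\rho$. Consequently $U = gH^{-1}$ satisfies $U\rho(\gamma)U^{-1} = gH^{-1}\rho(\gamma)Hg^{-1} = g\rho(\gamma)g^{-1} = \rho'(\gamma)$, so $U$ is a \emph{unitary} conjugating matrix.

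It remains to upgrade $U \in \mathrm{U}(N)$ to $U \in K$, and this is the one step that is not purely formal. For $G_\CC = \mathrm{GL}(n,\CC)$ there is nothing to do, since $K = \mathrm{U}(n)$. For the remaining groups, \Cref{rem:polar_dec_subgroups} applies: the uniqueness of the polar decomposition forces both factors of $g \in G_\CC$ to lie in $G_\CC$, whence $U \in G_\CC \cap \mathrm{U}(N) = K$. Therefore $\rho$ and $\rho'$ are conjugate by an element of $K$, as desired. The main subtlety is precisely this last point: the elementary polar-decomposition argument only produces $U \in \mathrm{U}(N)$, and it is the subgroup-compatibility recorded in \Cref{rem:polar_dec_subgroups} that pins $U$ down inside the compact form.
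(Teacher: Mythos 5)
Your proof is correct and follows essentially the same route as the paper: polar decomposition $g = UH$, showing that the unitary factor $U$ already conjugates $\rho$ to $\rho'$, and then using \Cref{rem:polar_dec_subgroups} to place $U$ in $K = G_\CC \cap \mathrm{U}(N)$. The only cosmetic difference is that you establish that $H$ commutes with the image of $\rho$ by first showing $g^{*}g$ does (via the second conjugator $(g^{*})^{-1}$), whereas the paper reads the same conclusion off the uniqueness of the polar decomposition applied to $U\rho(\gamma)\cdot(\rho(\gamma)^{-1}H\rho(\gamma)) = \rho'(\gamma)U\cdot H$.
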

 
 \begin{proof}
 	We are going to prove that if $\rho$ and $\rho'$ are conjugated by a matrix $P$ having polar decomposition $UH$, then $U$ also conjugates $\rho$ and $\rho'$.
 	
 	Let $P \in G_\CC$ conjugating $\rho$ and $\rho'$. Let $U \in \mathrm{U}(n)$ and $H \in \mathrm{GL}(n,\CC)$ a positive definite Hermitian matrix such that $P=UH$. By Remark \ref{rem:polar_dec_subgroups}, we know that $U \in K$. Now, let $\gamma \in \Gamma$.
 	We know that $P\rho(\gamma)P^{-1} = \rho'(\gamma)$, so
 	$UH\rho(\gamma)H^{-1}U^{-1} = \rho'(\gamma)$. By manipulating this last equation, we obtain:
 	\[U\rho(\gamma) ( \rho(\gamma)^{-1} H \rho(\gamma) ) = \rho'(\gamma) U H\]
 	Since $\rho(\gamma)$ and $\rho'(\gamma)$ belong to $K$, we have that $U\rho(\gamma) \in K$, and that $\rho'(\gamma) U \in K$, and $\rho(\gamma)^{-1} H \rho(\gamma)$ and $H$ are positive definite Hermitian matrices. Since the polar decomposition is unique, we have $U\rho(\gamma) = \rho'(\gamma) U$ and $\rho(\gamma)^{-1} H \rho(\gamma) = H$. In particular, $U\rho(\gamma)U^{-1} = \rho'(\gamma) $.
 \end{proof}
 
 Using the proposition above, we are now able to prove that the topological quotient $\mathrm{Hom}(\Gamma,K)/K$ is homeomorphic to the character variety $\mathcal{X}_K(\Gamma)$.
 
 \begin{prop}\label{prop:character_compact}
	Let $\Gamma$ be a finitely generated group and let $\mathrm{G}_\CC$ be one of the groups $\mathrm{GL}(n,\CC)$, $\mathrm{SL}(n,\CC), \mathrm{O}(n,\CC), \mathrm{SO}(n,\CC)$ and $\mathrm{Sp}(2n,\CC)$, and $K$ its compact real form.
	Then $\mathcal{X}_K(\Gamma)$ is homeomorphic to the topological quotient $\mathrm{Hom}(\Gamma,K)/K$.
 \end{prop}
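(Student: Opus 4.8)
The plan is to build the obvious comparison map and show it is a continuous bijection from a compact space onto a Hausdorff space. The projection $p \colon \mathrm{Hom}(\Gamma,G_\CC) \twoheadrightarrow \mathcal{X}_{G_\CC}(\Gamma)$ is continuous (in the trace and $Q$-coordinates it is given by polynomials) and constant on $G_\CC$-orbits, hence on $K$-orbits, so its restriction to $\mathrm{Hom}(\Gamma,K)$ factors through a continuous surjection
\[
\bar{p} \colon \mathrm{Hom}(\Gamma,K)/K \longrightarrow \mathcal{X}_K(\Gamma),
\]
the surjectivity being exactly the definition of $\mathcal{X}_K(\Gamma)$ as the image of $\mathrm{Hom}(\Gamma,K)$.

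First I would establish injectivity of $\bar{p}$, which is the heart of the matter. Suppose $\rho,\rho' \in \mathrm{Hom}(\Gamma,K)$ satisfy $\chi_\rho = \chi_{\rho'}$ in $\mathcal{X}_{G_\CC}(\Gamma)$. Since $K$ is compact, every representation with values in $K$ is unitarizable, hence completely reducible; viewed in $G_\CC$ it is therefore poly-stable, so its $G_\CC$-orbit is closed. As recalled above, the points of the character variety are in bijection with closed $G_\CC$-orbits, so the equality $\chi_\rho = \chi_{\rho'}$ forces $\rho$ and $\rho'$ to lie in the same closed orbit, i.e. to be conjugated in $G_\CC$. \Cref{prop:conj_g_implies_conj_compact} then upgrades this to conjugacy in $K$, which says precisely that $\rho$ and $\rho'$ define the same point of $\mathrm{Hom}(\Gamma,K)/K$. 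Hence $\bar{p}$ is injective.

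I would then conclude with a compactness argument. Because $K$ is compact, $\mathrm{Hom}(\Gamma,K)$ is a closed subset of $K^s$ and is therefore compact, so the quotient $\mathrm{Hom}(\Gamma,K)/K$ is compact. On the other hand, $\mathcal{X}_K(\Gamma)$ is a subspace of the affine algebraic set $\mathcal{X}_{G_\CC}(\Gamma)$, hence Hausdorff. A continuous bijection from a compact space onto a Hausdorff space is automatically a homeomorphism, so $\bar{p}$ is the desired homeomorphism.

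The main obstacle is the injectivity step, and specifically the justification that $K$-valued representations are poly-stable in $G_\CC$: only for such representations does equality of characters in the GIT quotient guarantee that the $G_\CC$-orbits actually coincide, rather than merely meeting in their closures. Once this closed-orbit property is in hand, \Cref{prop:conj_g_implies_conj_compact} performs the genuine work of descending the conjugating element from $G_\CC$ to $K$, and the remaining topological conclusion is routine.
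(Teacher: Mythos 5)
Your proof is correct and follows essentially the same route as the paper: the same comparison map, injectivity via semisimplicity/poly-stability of $K$-valued representations followed by \Cref{prop:conj_g_implies_conj_compact}, and the continuous-bijection-from-compact-to-Hausdorff argument. The only difference is that you spell out the closed-orbit justification that the paper leaves implicit (having recalled it in its background section).
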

 \begin{proof}
 	First, observe that there is a natural continuous map from $\mathrm{Hom}(\Gamma,K)/K$ to $\mathcal{X}_K(\Gamma)$. By definition, $\mathcal{X}_K(\Gamma)$ is the image of the natural map $\mathrm{Hom}(\Gamma,K) \to \mathcal{X}_{G_\CC}(\Gamma)$. Furthermore, since two representations which are conjugated in $K$ are also conjugated in $G_\CC$, the map factors through $\mathrm{Hom}(\Gamma,K)/K$. Thus, we have a map $\varphi : \mathrm{Hom}(\Gamma,K)/K \to \mathcal{X}_K(\Gamma)$. The map $\varphi$ is continuous and surjective by definition. Since $\mathrm{Hom}(\Gamma,K)/K$ is compact and $\mathcal{X}_K(\Gamma)$ is Hausdorff as a subset of $\CC^N$, we only need to prove that $\varphi$ is injective in order to have a homeomorphism.
 	
 	Let $\rho,\rho' \in \mathrm{Hom}(\Gamma,K)$ such that $\chi_\rho = \chi_{\rho'}$ in $\mathcal{X}_K(\Gamma)$. Since $K$ is compact, $K \subset U(n)$ and $\rho$ and $\rho'$ are semisimple. Hence, $\rho$ and $\rho'$ are conjugated in $G_\CC$. By \Cref{prop:conj_g_implies_conj_compact}, $\rho$ and $\rho'$ are conjugated in $K$ and thus represent the same point of $\mathrm{Hom}(\Gamma,K)/K$. Hence, $\varphi$ is injective and a homeomorphism.
 \end{proof}

\section{Reducible representations with values in $\mathrm{GL}(n,\CC)$}

Until now, we have only considered irreducible representations for proving that points of $\mathcal{X}^{\Phi}_{G_\CC}(\Gamma)$ are in fact points of $\mathcal{X}_{G_\RR}(\Gamma)$ for some real form $G_\RR$ of $G_\CC$. In this last section, we are going to consider reducible representations as well, but only for $\mathrm{GL}(n,\CC)$-representations. We prove that a semi-simple representation whose character is fixed by $\Phi_2$ is conjugated to a representation with values in a $\mathrm{U}(p,q)$. This fact, corresponding to \Cref{prop:reducible_fixed_phi2}, can be stated in the following way:

\begin{thm}\label{thm:char_var_unitary}
	Let $\Gamma$ be a finitely generated group and $n$ be a positive integer. Then
	\begin{equation*}
		\mathcal{X}^{\Phi_2}_{\mathrm{GL}(n,\CC)}(\Gamma) = 
		\bigcup_{p+q = n} \mathcal{X}_{\mathrm{U}(p,q)}(\Gamma).
	\end{equation*}
\end{thm}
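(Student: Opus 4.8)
The plan is to prove the two inclusions separately. The inclusion $\bigcup_{p+q=n}\mathcal{X}_{\mathrm{U}(p,q)}(\Gamma)\subseteq\mathcal{X}^{\Phi_2}_{\mathrm{GL}(n,\CC)}(\Gamma)$ is the easy one and is essentially the content of the remark on inclusions: if $\rho$ takes values in $\mathrm{U}(p,q)$, then $\transp{\conjug{\rho(\gamma)}}I_{p,q}\rho(\gamma)=I_{p,q}$, so $\transp{\conjug{\rho(\gamma)}}^{-1}=I_{p,q}\rho(\gamma)I_{p,q}^{-1}$, which shows that $\Phi_2\circ\rho$ is conjugate to $\rho$ and hence that $\Phi_2(\chi_\rho)=\chi_\rho$.

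For the reverse inclusion I would start from $\chi\in\mathcal{X}^{\Phi_2}_{\mathrm{GL}(n,\CC)}(\Gamma)$ and choose the unique semi-simple representation $\rho$ with $\chi_\rho=\chi$, which exists because points of the $\mathrm{GL}(n,\CC)$-character variety correspond exactly to semi-simple representations up to conjugacy. Writing $\rho^h(\gamma)=\transp{\conjug{\rho(\gamma)}}^{-1}$, one has $\chi_{\rho^h}=\Phi_2(\chi_\rho)=\chi_\rho$ by definition of $\Phi_2$; since $A\mapsto\transp{\conjug{A}}^{-1}$ preserves semi-simplicity, $\rho^h$ is again semi-simple, and therefore $\rho$ and $\rho^h$ are conjugate in $\mathrm{GL}(n,\CC)$. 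The whole problem then reduces to producing a nondegenerate invariant Hermitian form, i.e.\ a Hermitian matrix $H\in\mathrm{GL}(n,\CC)$ with $\transp{\conjug{\rho(\gamma)}}H\rho(\gamma)=H$ for all $\gamma$. Once such an $H$ is found, diagonalizing it by Sylvester's law of inertia gives $R\in\mathrm{GL}(n,\CC)$ with $R^{*}HR=I_{p,q}$, where $(p,q)$ is the signature of $H$, and the conjugate representation $\gamma\mapsto R^{-1}\rho(\gamma)R$ then takes values in $\mathrm{U}(p,q)$; since it has the same character as $\rho$, this yields $\chi=\chi_\rho\in\mathcal{X}_{\mathrm{U}(p,q)}(\Gamma)$.

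To construct $H$ I would work with the complex vector space $W=\{T\in\mathcal{M}_n(\CC)\mid\transp{\conjug{\rho(\gamma)}}T\rho(\gamma)=T\text{ for all }\gamma\}$ of invariant sesquilinear forms. This space is stable under the conjugate-linear involution $T\mapsto T^{*}=\transp{\conjug{T}}$, and the conjugacy $\rho\cong\rho^h$ furnishes an \emph{invertible} element $P\in W$, namely the intertwiner, which satisfies $\transp{\conjug{\rho(\gamma)}}P\rho(\gamma)=P$. Writing $P=H_1+iH_2$ with $H_1=\frac12(P+P^{*})$ and $H_2=\frac1{2i}(P-P^{*})$ produces two \emph{Hermitian} elements of $W$, and the function $t\mapsto\det(H_1+tH_2)$ is a real polynomial (its values are real for real $t$, being determinants of Hermitian matrices) that is nonzero at $t=i$, where it equals $\det P$. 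Hence it is nonzero for some real $t_0$, and $H=H_1+t_0H_2$ is a Hermitian, invariant, invertible matrix, exactly as required. This symmetrization is the same device already used in the odd orthogonal case of \Cref{prop:lifts_real_points_odd_o}, and it has the advantage of working uniformly, with no case analysis on the irreducible constituents of $\rho$.

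The main obstacle, and the reason reducible representations require a treatment different from the irreducible ones of \Cref{thm:main_thm}, is that Schur's lemma no longer forces the intertwiner $P$ to be a scalar multiple of a Hermitian matrix: the commutant of a reducible semi-simple $\rho$ is large, so the naive rescaling of $P$ into a Hermitian matrix fails. The argument above circumvents this precisely by not trying to correct $P$ itself, but instead extracting a nondegenerate Hermitian matrix from the real pencil $H_1+tH_2$ spanned by the Hermitian and anti-Hermitian parts of $P$; this step is insensitive to the multiplicities of the irreducible summands and is what makes the statement hold for arbitrary semi-simple characters.
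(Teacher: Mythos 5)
Your proof is correct, but it takes a genuinely different route from the paper. The paper reduces to \Cref{prop:reducible_fixed_phi2} and argues through the decomposition into irreducible summands: \Cref{lemma:direct_sum_invariant} splits the semi-simple $\rho$ into factors that are self-conjugate under $\Phi_2$ and pairs $\rho_i \oplus \Phi_2(\rho_i)$; the former are handled by the irreducible case (\Cref{thm:char_var_real_slnc}), the latter by the explicit hyperbolic form of \Cref{lemma:rho_plus_rho_phi2}, and the signatures are then summed. You instead bypass the decomposition entirely: from the intertwiner $P$ between $\rho$ and $\Phi_2\circ\rho$ you extract a nondegenerate invariant Hermitian form by splitting $P$ into Hermitian and anti-Hermitian parts and choosing a nondegenerate member of the real pencil $H_1+tH_2$ via the determinant-polynomial argument --- the same device the paper uses for bilinear forms in \Cref{prop:lifts_real_points_odd_o}, transplanted to the sesquilinear setting. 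Your argument is shorter, needs neither the decomposition lemma nor the irreducible case as input (it reproves that case for free), and correctly identifies why the same strategy cannot work for $\Phi_1$: there the obstruction $\conjug{P}P$ lives in a commutant that is no longer scalar, and no analogous real pencil of invariant structures is available. What the paper's approach buys in exchange is the explicit structural description of the semi-simplification (which is exactly what is needed for the $\Phi_1$ statement, \Cref{prop:reducible_fixed_phi1}, where the answer genuinely is a direct sum over two different real forms) and a treatment that is uniform across the two involutions. Both arguments rest on the same two background facts: each point of $\mathcal{X}_{\mathrm{GL}(n,\CC)}(\Gamma)$ has a unique semi-simple representative up to conjugacy, and semi-simple representations with equal characters are conjugate.
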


 We cannot expect a similar statement for the fixed points of $\Phi_1$, since the direct sum of $\rho \in \mathrm{Hom}(\Gamma,\mathrm{GL}(m_1 , \RR))$ and $\rho' \in \mathrm{Hom}(\Gamma,\mathrm{GL}(m_2 , \HH))$ will have a character fixed by $\Phi_1$ but its image is not fixed by any anti-holomorphic involution of $\mathrm{GL}(m_1+2m_2 , \CC)$. However, if $\rho \in \mathrm{Hom}(\Gamma, \mathrm{GL}(n,\CC))$ is semi-simple, we are able to prove that if $\chi_\rho \in \mathcal{X}^{\Phi_1}_{\mathrm{GL}(n,\CC)}(\Gamma)$, then $\rho$ is conjugated to the direct sum of a representation with values in some $\mathrm{GL}(m_1 , \RR)$ and a representation with values in some $\mathrm{GL}(m_2 , \HH)$. It is the content of \Cref{prop:reducible_fixed_phi1}.

 We begin by studying semi-simple representations as direct sums of irreducible representations. A capital fact is that the multiplicity of an irreducible representation is well defined, as stated in the following remark.
 
\begin{rem}
	The multiplicity of an irreducible representation $(\rho , V)$ in a semi-simple representation $(\rho',W)$ is well defined as
	$\dim (\mathrm{Hom}(V,W)^{\Gamma})$.
	The key point is the conclusion of Schur's lemma.
\end{rem}

 In the three following lemmas, we study the decomposition in irreducible factors of a semi-simple representation $\rho$ which is conjugated to $\Phi \circ \rho$.

\begin{lemme}\label{lemma:direct_sum_invariant}
	Let $\Phi = \Phi_1$ or $\Phi_2$ be an anti-holomorhphic involution of $\mathrm{GL}(n,\CC)$. Let $\rho \in \mathrm{Hom}(\Gamma, \mathrm{GL}(n,\CC))$ be a semi-simple representation such that $\rho$ and $\Phi \circ \rho$ are conjugated in $\mathrm{GL}(n,\CC)$. Then $\rho$ is conjugated in $\mathrm{GL}(n,\CC)$ to a representation of the form
	\begin{equation*}
		\bigoplus_{i=1}^r \rho_i^{\oplus m_i} \oplus \bigoplus_{i=r+1}^{s} (\rho_i \oplus \Phi(\rho_i))^{\oplus m_i}
	\end{equation*}
	where the $\rho_i$ are irreducible representations and $\rho_i$ is conjugated to $\Phi(\rho_i)$ for $i \in \{1,\dots, r\}$.
\end{lemme}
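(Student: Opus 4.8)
The plan is to decompose $\rho$ into its irreducible constituents and to track how the involution $\Phi$ permutes them. Since $\rho$ is semi-simple, after a conjugation in $\mathrm{GL}(n,\CC)$ I may write it in block-diagonal form $\rho = \bigoplus_{j=1}^{s} \sigma_j^{\oplus n_j}$, where the $\sigma_j$ are pairwise non-conjugate irreducible representations and $n_j \geq 1$. First I would observe that, for a block-diagonal $\rho(\gamma)$, both involutions $A \mapsto \conjug{A}$ and $A \mapsto \transp{\conjug{A}}^{-1}$ preserve the block-diagonal shape (transposition, inversion and conjugation all act blockwise), so that $\Phi \circ \rho = \bigoplus_{j} (\Phi \circ \sigma_j)^{\oplus n_j}$ as an actual block-diagonal representation.

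Next I would verify that $\Phi$ sends irreducible representations to irreducible representations. For $\Phi_1$ this is immediate, since a subspace invariant under all $\conjug{\sigma_j(\gamma)}$ yields, by complex conjugation, a subspace invariant under all $\sigma_j(\gamma)$. For $\Phi_2$ the representation $\Phi_2 \circ \sigma_j$ is the complex conjugate of the contragredient of $\sigma_j$, and both conjugation and passage to the contragredient preserve irreducibility; hence $\Phi_2 \circ \sigma_j$ is again irreducible. Consequently the displayed expression for $\Phi \circ \rho$ is exactly its decomposition into irreducibles, with the factor $\Phi \circ \sigma_j$ occurring with multiplicity $n_j$.

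The heart of the argument is then combinatorial. By the Remark on the well-definedness of multiplicities (a consequence of Schur's lemma), the multiset of isomorphism classes of irreducible constituents of a semi-simple representation is an invariant of its conjugacy class. Since $\rho$ and $\Phi \circ \rho$ are conjugate in $\mathrm{GL}(n,\CC)$, these multisets coincide, so the assignment $\sigma_j \mapsto \Phi \circ \sigma_j$ permutes the finite set $\{\sigma_1,\dots,\sigma_s\}$ up to conjugation and preserves the multiplicity of each class. Because $\Phi$ is an involution of $\mathrm{GL}(n,\CC)$, this permutation squares to the identity, hence splits the constituents into fixed points and two-element orbits. I would relabel so that $\sigma_1,\dots,\sigma_r$ are the fixed points, writing $\rho_i = \sigma_i$ with $\Phi \circ \rho_i$ conjugate to $\rho_i$ for $i \in \{1,\dots,r\}$, and pair the remaining classes as $\{\rho_i, \Phi \circ \rho_i\}$ for $i \in \{r+1,\dots,s\}$. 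Since the permutation preserves multiplicities, $\rho_i$ and $\Phi \circ \rho_i$ occur in $\rho$ with the same multiplicity $m_i$ inside each two-element orbit, which is what allows grouping them as $(\rho_i \oplus \Phi(\rho_i))^{\oplus m_i}$.

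Finally I would rearrange the block-diagonal form by a permutation matrix, which is a conjugation in $\mathrm{GL}(n,\CC)$, so that the fixed-point factors appear first and each two-element orbit is written as a consecutive pair, yielding precisely the asserted normal form. I expect no serious obstacle: the only two points needing care are the preservation of irreducibility under $\Phi_2$, handled via the contragredient, and the equality of multiplicities within each two-element orbit, which is forced by the uniqueness of the decomposition; both are routine once Schur's lemma is in hand.
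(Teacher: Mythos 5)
Your proposal is correct and follows essentially the same route as the paper: decompose $\rho$ into distinct irreducible factors with multiplicities, use conjugacy of $\rho$ and $\Phi\circ\rho$ together with the well-definedness of multiplicities (Schur) to see that $\Phi$ permutes the factors preserving multiplicities, and split into fixed classes and two-element orbits before renumbering. You merely make explicit two points the paper leaves implicit (that $\Phi$ preserves irreducibility and that the induced permutation is an involution), which is fine.
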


\begin{proof}
	Since $\rho$ is semi-simple, there exist $k \in \NN^*$, $m_1, \cdots , m_k \in \NN^*$ and distinct irreducible representations $\rho_1, \dots , \rho_k$ of $\Gamma$ such that
	\begin{equation*}
		\rho = \rho_1^{\oplus m_1} \oplus \cdots \oplus \rho_k^{\oplus m_k}
	\end{equation*}
	Since the representations $\rho$ and $\Phi(\rho)$ are conjugated in $\mathrm{GL}(n,\CC)$, they have the same irreducible representations as factors with the same multiplicity. However, 
	\begin{equation*}
		\Phi(\rho) = \Phi(\rho_1)^{\oplus m_1} \oplus \cdots \oplus \Phi(\rho_k)^{\oplus m_k}
	\end{equation*}
	so for each $i \in \{1, \dots , k \}$, either $\rho_i$ is conjugate to $\Phi(\rho_i)$, or there is some $j \in \{1, \dots , k\}$ such that $\rho_i$ is conjugate to $\Phi(\rho_j)$. In the second case, the representations $\rho_i$ and $\rho_j$ have the same multiplicity.
	Thus, after renumeration of the irreducible representations
	\begin{equation*}
	\rho =
		\bigoplus_{i=1}^r \rho_i^{\oplus m_i} \oplus \bigoplus_{i=r+1}^{s} (\rho_i \oplus \rho'_i)^{\oplus m_i}
	\end{equation*}
	Where $\rho_i$ is conjugate to $\Phi(\rho_i)$ for $i \in \{1 , \dots ,r  \}$ and $\rho'_i$ is conjugate to $\Phi(\rho_i)$ for $i \in \{r+1, \dots , s\}$. 
	Hence, $\rho$ is equivalent to
	\begin{equation*}
		\bigoplus_{i=1}^r \rho_i^{\oplus m_i} \oplus \bigoplus_{i=r+1}^{s} (\rho_i \oplus \Phi(\rho_i))^{\oplus m_i}.
	\end{equation*}
\end{proof}

\begin{lemme}\label{lemma:rho_plus_rho_conj}
	Let $\rho \in \mathrm{Hom}(\Gamma, \mathrm{GL}(n,\CC))$ be any representation. Then, the representation $\rho \oplus \Phi_1(\rho)$ is conjugated to a representation with values in $\mathrm{GL}(n,\HH)$.
\end{lemme}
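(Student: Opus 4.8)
The plan is to show that, with an appropriate ordering of the two summands, the representation $\rho \oplus \Phi_1(\rho)$ \emph{literally} takes values in $\mathrm{GL}(n,\HH)$, so that no further conjugation is needed. Recall that $\Phi_1$ is induced by $A \mapsto \conjug{A}$, so for $\gamma \in \Gamma$ the representation $\rho \oplus \Phi_1(\rho)$ sends $\gamma$ to the block-diagonal matrix
\[
M(\gamma) =
\begin{pmatrix}
\rho(\gamma) & 0 \\
0 & \conjug{\rho(\gamma)}
\end{pmatrix}
\in \mathrm{GL}(2n,\CC).
\]
The defining relation of $\mathrm{GL}(n,\HH)$ recalled in Section 2 is $MJ = J\conjug{M}$ with $J = J_{2n}$, so it suffices to verify this single identity on each $M(\gamma)$.

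First I would carry out the direct block computation. Writing $R = \rho(\gamma)$ and using $J = \begin{pmatrix} 0 & I_n \\ -I_n & 0 \end{pmatrix}$, together with $\conjug{M(\gamma)} = \begin{pmatrix} \conjug{R} & 0 \\ 0 & R \end{pmatrix}$, one finds
\[
M(\gamma)J =
\begin{pmatrix}
0 & R \\
-\conjug{R} & 0
\end{pmatrix}
= J\conjug{M(\gamma)}.
\]
Since each $M(\gamma)$ lies in $\mathrm{GL}(2n,\CC)$ (being block-diagonal with invertible blocks) and satisfies $M(\gamma)J = J\conjug{M(\gamma)}$, it belongs to $\mathrm{GL}(n,\HH)$; hence $\rho \oplus \Phi_1(\rho)$ takes values in $\mathrm{GL}(n,\HH)$.

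Conceptually, this computation records that the antilinear map $u \mapsto J\conjug{u}$ on $\CC^{2n}$ squares to $-I_{2n}$ and commutes with the image of $\rho \oplus \Phi_1(\rho)$; this quaternionic structure is exactly what identifies its commutant with $\mathrm{GL}(n,\HH)$, and the pairing of $\rho(\gamma)$ against $\conjug{\rho(\gamma)}$ in the two diagonal blocks is precisely what makes $J\conjug{(\,\cdot\,)}$ equivariant. I do not expect a genuine obstacle here: the only point requiring care is bookkeeping, namely placing $\conjug{\rho}$ (rather than $\rho$) in the second block so that the conjugation in $\conjug{M(\gamma)}$ matches the off-diagonal swap produced by $J$. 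The phrase ``conjugated to'' in the statement merely reflects that a direct sum is defined only up to conjugation (reordering of summands); with the ordering above the containment is literal.
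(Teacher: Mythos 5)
Your proof is correct and follows essentially the same strategy as the paper: a direct block computation verifying the defining relation $MJ=J\conjug{M}$ of $\mathrm{GL}(n,\HH)$. The only difference is that the paper first conjugates $\rho\oplus\conjug{\rho}$ by $\bigl(\begin{smallmatrix} I_n & I_n \\ -I_n & I_n\end{smallmatrix}\bigr)$ before checking the relation, whereas you observe (correctly) that the block-diagonal matrix $\mathrm{diag}(\rho(\gamma),\conjug{\rho(\gamma)})$ already satisfies it, so the conjugation in the paper's proof is not actually needed.
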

\begin{proof}
	Consider the representation $\sigma$ defined by
	\begin{align*}
		\sigma(\gamma) &= 
		\begin{pmatrix}
		I_n & I_n \\
		-I_n & I_n
		\end{pmatrix}
		\begin{pmatrix}
		\rho(\gamma) & 0 \\
		0 & \conjug{\rho(\gamma)}
		\end{pmatrix}
		\begin{pmatrix}
		I_n & I_n \\
		-I_n & I_n
		\end{pmatrix}^{-1} \\
		&=
		\frac{1}{2}
		\begin{pmatrix}
		\rho(\gamma)+\conjug{\rho(\gamma)} & \rho(\gamma)-\conjug{\rho(\gamma)} \\
		\rho(\gamma)-\conjug{\rho(\gamma)} & \rho(\gamma)+\conjug{\rho(\gamma)}
		\end{pmatrix}
	\end{align*}
	The representation $\sigma$ is conjugated to $\rho \oplus \conjug{\rho}$ and for all $\gamma \in \Gamma$ we have $\sigma(\gamma) J_{2n} = J_{2n} \conjug{\sigma(\gamma)}$. Hence $\sigma \in \mathrm{Hom}(\Gamma, \mathrm{GL}(n,\HH))$.
\end{proof}

\begin{lemme}\label{lemma:rho_plus_rho_phi2}
	Let $\rho \in \mathrm{Hom}(\Gamma, \mathrm{GL}(n,\CC))$ be any representation. Then, the representation $\rho \oplus \Phi_2(\rho)$ is unitary for the Hermitian form with matrix
	$
	\begin{pmatrix}
		0 & I_n \\
		I_n & 0
	\end{pmatrix}
	$.
\end{lemme}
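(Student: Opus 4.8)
The plan is to verify directly the defining identity of the relevant unitary group by a single block computation, exactly parallel to the proof of \Cref{lemma:rho_plus_rho_conj}. Writing $A = \rho(\gamma)$ and recalling that $\Phi_2$ is the involution $A \mapsto \transp{\conjug{A}}^{-1} = (A^*)^{-1}$, the representation $\rho \oplus \Phi_2(\rho)$ sends $\gamma$ to the block-diagonal matrix
\[
M(\gamma) =
\begin{pmatrix}
A & 0 \\
0 & \transp{\conjug{A}}^{-1}
\end{pmatrix}.
\]
To say that $M(\gamma)$ is unitary for the Hermitian form with matrix $H = \begin{pmatrix} 0 & I_n \\ I_n & 0 \end{pmatrix}$ means precisely that $M(\gamma)^* H M(\gamma) = H$ for every $\gamma \in \Gamma$, so this is the identity I would check.

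First I would record the adjoint of the second diagonal block: since $\bigl((A^*)^{-1}\bigr)^* = A^{-1}$, we have $M(\gamma)^* = \begin{pmatrix} A^* & 0 \\ 0 & A^{-1} \end{pmatrix}$. Then I would multiply out $M(\gamma)^* H M(\gamma)$ in two steps. Computing $H M(\gamma) = \begin{pmatrix} 0 & (A^*)^{-1} \\ A & 0 \end{pmatrix}$ first, and left-multiplying by $M(\gamma)^*$, the diagonal blocks vanish while the two off-diagonal blocks collapse to $A^*(A^*)^{-1} = I_n$ and $A^{-1}A = I_n$. Hence $M(\gamma)^* H M(\gamma) = H$, which is exactly the assertion.

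There is essentially no obstacle here: the statement is a formal consequence of the identity $\Phi_2(A) = (A^*)^{-1}$ together with the anti-diagonal shape of $H$, and it holds for an \emph{arbitrary} matrix $A$, so no irreducibility or semisimplicity hypothesis is needed. The only point requiring a little care is the bookkeeping of the conjugate-transpose of the second diagonal block, i.e. the identity $\bigl((A^*)^{-1}\bigr)^* = A^{-1}$, after which the block multiplication is immediate. This lemma is the $\Phi_2$-analogue of \Cref{lemma:rho_plus_rho_conj}: whereas the $\Phi_1$ construction there produces a representation valued in $\mathrm{GL}(n,\HH)$, the present construction outputs a representation preserving the signature-$(n,n)$ Hermitian form $H$, hence valued in a copy of $\mathrm{U}(n,n)$.
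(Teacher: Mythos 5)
Your proof is correct and is essentially identical to the paper's: both verify the identity $M(\gamma)^* H M(\gamma) = H$ by the same direct block computation, using $\bigl((A^*)^{-1}\bigr)^* = A^{-1}$. Nothing further is needed.
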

\begin{proof}
	Let $\gamma \in \Gamma$. We have:
	\begin{align*}
		\transp{
		\conjug{
		\begin{pmatrix}
		\rho(\gamma) & 0 \\
		0 & {}^t\!\conjug{\rho(\gamma)}^{-1}	
		\end{pmatrix}
		}}
		\begin{pmatrix}
			0 & I_n \\
			I_n & 0
		\end{pmatrix}
		\begin{pmatrix}
			\rho(\gamma) & 0 \\
			0 & {}^t\!\conjug{\rho(\gamma)}^{-1}	
		\end{pmatrix}
		&=
		\begin{pmatrix}
			0 & {}^t\!\conjug{\rho(\gamma)} {}^t\!\conjug{\rho(\gamma)}^{-1} \\
			\rho(\gamma)^{-1}\rho(\gamma) & 0
		\end{pmatrix} \\
		&=
		\begin{pmatrix}
			0 & I_n \\
			I_n & 0
		\end{pmatrix}
	\end{align*}
\end{proof}

With these results, we are able to prove that semi-simple representations with characters fixed by an anti-holomorphic involution $\Phi$ of $\mathcal{X}_{\mathrm{GL}(n,\CC)}(\Gamma)$ are conjugated to direct sums of representations with values in real forms of complex linear groups. We begin with the involution $\Phi_1$, related to $\mathrm{GL}(m,\RR)$ and $\mathrm{GL}(m,\HH)$ groups.

\begin{prop}\label{prop:reducible_fixed_phi1}
	Let $\rho \in \mathrm{Hom}(\Gamma,\mathrm{GL}(n,\CC))$ be a semi-simple representation. If $\Phi_1(\chi_\rho) = \chi_\rho$, then there exist $r_1,r_2 \in \NN$ with $r_1+2r_2 = n$, $\rho' \in \mathrm{Hom}(\Gamma, \mathrm{GL}(r_1,\RR))$ and $\rho'' \in \mathrm{Hom}(\Gamma, \mathrm{GL}(r_2,\HH))$ such that $\rho$ is conjugated to $\rho' \oplus \rho''$.
\end{prop}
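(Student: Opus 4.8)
The plan is to reduce everything to the irreducible building blocks and then invoke the lifting results already established in the excerpt. First I would observe that $\Phi_1$ is induced by complex conjugation, so that $\Phi_1(\chi_\rho) = \chi_{\Phi_1 \circ \rho}$; the hypothesis $\Phi_1(\chi_\rho) = \chi_\rho$ therefore says that $\rho$ and $\Phi_1 \circ \rho$ have the same character. Since $\rho$ is semi-simple and the complex conjugate of a semi-simple representation is again semi-simple, and since a completely reducible representation is determined up to conjugacy by its character, I conclude that $\rho$ and $\Phi_1 \circ \rho$ are conjugated in $\mathrm{GL}(n,\CC)$. This is exactly the hypothesis of \Cref{lemma:direct_sum_invariant}, which I would apply with $\Phi = \Phi_1$ to write $\rho$, up to conjugation, as
\[
\bigoplus_{i=1}^r \rho_i^{\oplus m_i} \oplus \bigoplus_{i=r+1}^{s} (\rho_i \oplus \Phi_1(\rho_i))^{\oplus m_i},
\]
with the $\rho_i$ irreducible and with $\rho_i$ conjugate to $\Phi_1(\rho_i)$ for $i \leq r$.

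Next I would treat the two families of summands separately. For $i \leq r$, the irreducible representation $\rho_i$ satisfies $\chi_{\rho_i} = \chi_{\Phi_1(\rho_i)}$, so its character is fixed by $\Phi_1$; the $\mathrm{GL}(n,\CC)$-version of \Cref{thm:char_var_real_slnc} then provides a conjugate of $\rho_i$ with values either in some $\mathrm{GL}(a_i,\RR)$ or in some $\mathrm{GL}(b_i,\HH)$. For $i > r$, \Cref{lemma:rho_plus_rho_conj} shows directly that $\rho_i \oplus \Phi_1(\rho_i)$ is conjugate to a representation with values in a quaternionic group $\mathrm{GL}(c_i,\HH)$. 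Thus, after grouping, every irreducible factor has been placed in a real or a quaternionic linear group.

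It then remains to absorb the multiplicities and to merge the pieces into two single representations. A finite block-diagonal sum of representations valued in real groups $\mathrm{GL}(a,\RR)$ is again real, so all the real summands assemble into one $\rho' \in \mathrm{Hom}(\Gamma,\mathrm{GL}(r_1,\RR))$. Similarly a direct sum of quaternionic representations is quaternionic, so the remaining summands assemble into one $\rho'' \in \mathrm{Hom}(\Gamma,\mathrm{GL}(r_2,\HH))$, whence $\rho$ is conjugate to $\rho' \oplus \rho''$ with $r_1 + 2r_2 = n$. The one point requiring care, and the only genuine obstacle, is this last assembly for the quaternionic part: each quaternionic block is defined with respect to its own copy of the standard form $J$, so the direct sum carries a block-diagonal quaternionic structure rather than the standard $J_{2r_2}$. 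Since any two quaternionic structures on $\CC^{2r_2}$ are conjugate by a complex linear isomorphism — equivalently, the block-diagonal $J$ can be brought to $J_{2r_2}$ by a permutation of the basis vectors — a single further conjugation lands $\rho''$ in the standard $\mathrm{GL}(r_2,\HH)$, completing the argument.
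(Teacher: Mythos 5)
Your proposal is correct and follows essentially the same route as the paper: reduce to the decomposition of \Cref{lemma:direct_sum_invariant}, lift the self-conjugate irreducible factors via \Cref{thm:char_var_real_slnc} and the paired factors via \Cref{lemma:rho_plus_rho_conj}, then regroup. Your closing remark about conjugating the block-diagonal quaternionic structure to the standard $J_{2r_2}$ by a basis permutation is a detail the paper folds into ``rearranging the terms,'' and it is handled correctly.
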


\begin{proof}
	By Lemma \ref{lemma:direct_sum_invariant}, we can suppose that $\rho$ has the form
	\begin{equation*}
		\rho = \bigoplus_{i=1}^r \rho_i^{\oplus m_i} \oplus \bigoplus_{i=r+1}^{s} (\rho_i \oplus \Phi_1(\rho_i))^{\oplus m_i}
	\end{equation*}
	where $\rho_i$ is conjugated to $\Phi_1(\rho_i)$ for $i \in \{1,\dots, r\}$.
	
	By \Cref{thm:char_var_real_slnc}, if $i \in \{1,\dots,r \}$, $\rho_i$ is conjugated to a representation $\rho'_i$ with values in some $\mathrm{GL}(k,\RR)$ or $\mathrm{GL}(k,\HH)$.
	Furthermore, if $i \in \{r+1, \dots , s \}$, by Lemma \ref{lemma:rho_plus_rho_conj}, each representation $\rho_i \oplus \Phi_1(\rho_i)$ is conjugated to a representation $\sigma_i$ with values in some $\mathrm{GL}(k,\HH)$.
	Therefore, $\rho$ is conjugated in $\mathrm{GL}(n,\CC)$ to the representation
	\begin{equation*}
		 \bigoplus_{i=1}^r (\rho'_i)^{\oplus m_i} \oplus \bigoplus_{i=r+1}^{s} \sigma_i^{\oplus m_i}
	\end{equation*}
	that takes values in a product of copies of $\mathrm{GL}(r_i , \RR)$ or $\mathrm{GL}(r_i , \HH)$. Thus, by rearranging the terms, we obtain $r_1,r_2 \in \NN$ such that $\rho$ is conjugated to $\rho' \oplus \rho''$ where $\rho' \in \mathrm{Hom}(\Gamma, \mathrm{GL}(r_1,\RR))$ and $\rho'' \in \mathrm{Hom}(\Gamma, \mathrm{GL}(r_2 , \HH))$.
\end{proof}

Finally, we prove the same type of statement for the involution $\Phi_2$, which corresponds to unitary groups.

\begin{prop}\label{prop:reducible_fixed_phi2}
	Let $\rho \in \mathrm{Hom}(\Gamma,\mathrm{GL}(n,\CC))$ be a semi-simple representation. If $\Phi_2(\chi_\rho) = \chi_\rho$, then there exist $p,q \in \NN$ with $p+q =n$ such that $\rho$ is conjugated to a representation with values in $\mathrm{U}(p,q)$.
\end{prop}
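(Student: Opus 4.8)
The plan is to mirror the proof of \Cref{prop:reducible_fixed_phi1}, replacing the $\Phi_1$-tools by their $\Phi_2$-analogues. First, since $\Phi_2$ is a group automorphism of $\mathrm{GL}(n,\CC)$, the representation $\Phi_2\circ\rho$ is again semi-simple, and the hypothesis $\Phi_2(\chi_\rho)=\chi_\rho$ reads $\chi_{\Phi_2\circ\rho}=\chi_\rho$. Two semi-simple representations with the same character are conjugate in $\mathrm{GL}(n,\CC)$, so $\rho$ and $\Phi_2\circ\rho$ are conjugate; this is exactly the hypothesis required to apply \Cref{lemma:direct_sum_invariant} with $\Phi=\Phi_2$. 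I may therefore assume that $\rho$ has the form
\[
\rho = \bigoplus_{i=1}^r \rho_i^{\oplus m_i} \oplus \bigoplus_{i=r+1}^{s} (\rho_i \oplus \Phi_2(\rho_i))^{\oplus m_i},
\]
where every $\rho_i$ is irreducible and $\rho_i$ is conjugate to $\Phi_2(\rho_i)$ for $i \in \{1,\dots,r\}$.

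Next I would treat the two kinds of summand separately, aiming to realise each as a unitary representation for an explicit Hermitian form. For $i \le r$, the factor $\rho_i$ is irreducible and its character is fixed by $\Phi_2$ (because $\rho_i$ is conjugate to $\Phi_2(\rho_i)$), so the $\mathrm{GL}(n,\CC)$-version of \Cref{thm:char_var_real_slnc} shows it is conjugate to a representation with values in some $\mathrm{U}(p_i,q_i)$, that is, it preserves a non-degenerate Hermitian form $H_i$ of signature $(p_i,q_i)$. For $i > r$, \Cref{lemma:rho_plus_rho_phi2} shows directly that $\rho_i \oplus \Phi_2(\rho_i)$ preserves the non-degenerate Hermitian form with matrix $\left(\begin{smallmatrix} 0 & I \\ I & 0 \end{smallmatrix}\right)$, which has signature $(k_i,k_i)$ with $k_i$ the dimension of $\rho_i$.

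Finally I would assemble the pieces. Each summand of $\rho$ (with its multiplicity) preserves a non-degenerate Hermitian form, so the block-diagonal direct sum of all these forms is a single non-degenerate Hermitian form $H$ on $\CC^n$ preserved by the whole of $\rho$. Writing $(p,q)$ for the signature of $H$, one has $p+q=n$, and choosing $R \in \mathrm{GL}(n,\CC)$ with $\transp{\conjug{R}} H R = I_{p,q}$ conjugates $\rho$ to a representation with values in $\mathrm{U}(p,q)$, which is the claim. The only genuine content lies in the block-by-block reduction of the previous paragraph; the final assembly is routine bookkeeping of signatures, since unitarity for a fixed form is automatically inherited by direct sums and survives the change of basis to the standard form $I_{p,q}$. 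The step I expect to require the most care is ensuring that the irreducible $\Phi_2$-fixed factors can indeed be made unitary, i.e.\ the correct transfer of \Cref{thm:char_var_real_slnc} from $\mathrm{SL}$ to $\mathrm{GL}$, everything else following formally.
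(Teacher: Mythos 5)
Your proposal is correct and follows essentially the same route as the paper: reduce to the normal form of \Cref{lemma:direct_sum_invariant} with $\Phi=\Phi_2$, handle the $\Phi_2$-self-conjugate irreducible factors via the $\mathrm{GL}(n,\CC)$ version of \Cref{thm:char_var_real_slnc} and the paired factors via \Cref{lemma:rho_plus_rho_phi2}, then sum the Hermitian forms and read off the signature. Your explicit justification that $\rho$ and $\Phi_2\circ\rho$ are conjugate (needed to invoke \Cref{lemma:direct_sum_invariant}) is a detail the paper leaves implicit, but the argument is the same.
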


\begin{proof}
	By Lemma \ref{lemma:direct_sum_invariant}, we can suppose that $\rho$ has the form
	\begin{equation*}
		\rho = \bigoplus_{i=1}^r \rho_i^{\oplus m_i} \oplus \bigoplus_{i=r+1}^{s} (\rho_i \oplus \Phi_1(\rho_i))^{\oplus m_i}
	\end{equation*}
	where $\rho_i$ is conjugated to $\Phi_2(\rho_i)$ for $i \in \{1,\dots, r\}$.
	
	By \Cref{thm:char_var_real_slnc}, if $i \in \{1,\dots,r \}$, $\rho_i$ is conjugated to a representation $\rho'_i$ with values in some $\mathrm{U}(p_i,q_i)$.
	Furthermore, if $i \in \{r+1, \dots , s \}$, by Lemma \ref{lemma:rho_plus_rho_phi2}, each representation $\rho_i \oplus \Phi_2(\rho_i)$ is unitary for a Hermitian form of signature $(r_i,r_i)$, and thus is conjugated to a representation $\sigma_i$ with values in $\mathrm{U}(r_i,r_i)$.
	Therefore, $\rho$ is conjugated in $\mathrm{GL}(n,\CC)$ to the representation
	\begin{equation*}
		\bigoplus_{i=1}^r (\rho'_i)^{\oplus m_i} \oplus \bigoplus_{i=r+1}^{s} \sigma_i^{\oplus m_i}
	\end{equation*}
	that takes values in a product of copies of $\mathrm{U}(p_i,q_i)$. Thus, by grouping the signatures, there exist $p,q \in \NN$ with $p+q =n$ such that $\rho$ is conjugated to a representation with values in $\mathrm{U}(p,q)$.
\end{proof}

 \bibliographystyle{alpha}
 \bibliography{character_varieties.bib}

\end{document}